\newcommand{\docclass}{amsart}
\newcommand{\listitemhack}[1]{(#1)}	
	\theoremstyle{plain}
\newtheorem{thm}{Theorem}[section]
\theoremstyle{plain}
\newtheorem{lem}[thm]{Lemma}
\theoremstyle{definition}
\newtheorem{defn}[thm]{Definition}
\theoremstyle{remark}
\newtheorem{rem}[thm]{Remark}
\theoremstyle{plain}
\theoremstyle{plain}
\newtheorem{cor}[thm]{Corollary}
  \theoremstyle{plain}
  \newtheorem*{cor*}{Corollary}		
\global\long\def\norm#1{\left\Vert #1\right\Vert }
\global\long\def\parenth#1{\left(\vphantom{#1}\right.\!\!#1\!\!\left.\vphantom{#1}\right)}
\global\long\def\curly#1{\left\{  #1\right\}  }
\global\long\def\set#1#2{\left\{  \vphantom{#1\ \vrule\ #2}\right.\!\!#1\ \vrule\ \linebreak[3]#2\!\!\left.\vphantom{#1\ \vrule\ #2}\right\}  }
\global\long\def\abs#1{\left\vert #1\right\vert }
\global\long\def\duality#1#2{\left\langle #1\,\vline width1pt\,#2\right\rangle }
\global\long\def\Duality#1#2{\left\llangle #1\,\vline width1pt\,#2\right\rrangle }
\global\long\def\R{\mathbb{R}}
\global\long\def\N{\mathbb{N}}
\global\long\def\closedball#1{\textup{\textbf{B}}_{#1}}
\newcommandx\closure[2][usedefault, addprefix=\global, 1=]{\overline{{#2}^{#1}}}
\global\long\def\convexhull#1{\textup{co}#1}
\global\long\def\closedconvexhull#1{\textup{\textbf{co}}#1}
\newcommand{\bottompolarsymbol}{
	\odot%
}
\global\long\def\bottompolar#1{#1^{\bottompolarsymbol}}
\global\long\def\bibottompolar#1{#1^{\bottompolarsymbol\bottompolarsymbol}}
\global\long\def\l#1{\ell^{#1}}
\global\long\def\conv{\textbf{c}}
\global\long\def\omegadomain#1{\parenth{\Omega,#1}}
\global\long\def\constants{\Xi}
\global\long\def\allconstants{\constants_{\infty}}
\global\long\def\oneconstants{\constants_{1}}
\global\long\def\summables{\Sigma}
\global\long\def\zerosummables{\summables_{0}}
\global\long\def\onesummables{\summables_{1}}
\global\long\def\directsum{\bigoplus}
\global\long\def\finite#1{\mathcal{F}\parenth{#1}}
\global\long\def\constsymb{\textbf{const}}
\global\long\def\constmap{\constsymb}
\global\long\def\Realpart{\textup{Re}\,}
\global\long\def\selfadjoint#1{#1_{\textup{sa}}}
\begin{document}
		\newcommand{\doctitle}{Geometric duality theory of cones\\ in dual pairs of vector spaces}
\newcommand{\shortdoctitle}{Geometric duality theory of cones}

\newcommand{\docabstract}{%
This paper will generalize what may be termed the ``geometric duality
theory'' of real pre-ordered Banach spaces which relates geometric
properties of a closed cone in a real Banach space, to geometric properties
of the dual cone in the dual Banach space. We show that geometric
duality theory is not restricted to real pre-ordered Banach spaces,
as is done classically, but can be extended to real Banach
spaces endowed with arbitrary collections of closed cones.

We define geometric notions of normality, conormality, additivity
and coadditivity for members of dual pairs of real vector spaces as
certain possible interactions between two cones and two convex convex
sets containing zero. We show that, thus defined, these notions are
dual to each other under certain conditions, i.e., for a dual pair
of real vector spaces $(Y,Z)$, the space $Y$ is normal (additive)
if and only if its dual $Z$ is conormal (coadditive) and vice versa.
These results are set up in a manner so as to provide a framework
to prove results in the geometric duality theory of cones in real
Banach spaces. As an example of using this framework, we generalize
classical duality results for real Banach spaces pre-ordered by a
single closed cone, to real Banach spaces endowed with an arbitrary
collections of closed cones.

As an application, we analyze some of the geometric properties of
naturally occurring cones in C{*}-algebras and their duals.
}

		\IfStrEqCase{\docclass}{%
			{amsart}{\title[Geometric duality theory of cones]{Geometric duality theory of cones\\ in dual pairs of vector spaces}}%
			{elsart}{\title{\doctitle}}%
		}
		\begin{abstract}%
			\docabstract%
		\end{abstract}
%
		\newcommand{\authornameMiek}{Miek Messerschmidt}
\newcommand{\emailMiek}{mmesserschmidt@gmail.com}

\newcommand{\addressLeiden}{%
	Mathematical Institute, 
	Leiden University, 
	P.O. Box 9512, 
	\mbox{2300 RA} Leiden, 
	The Netherlands
}

\newcommand{\addressNWU}{
	Unit for BMI,
	North-west University,
	Private Bag X6001,
	Potchefstroom,
	South Africa,
	2520
}

\newcommand{\authornameMarcel}{Marcel de Jeu}
\newcommand{\emailMarcel}{mdejeu@math.leidenuniv.nl}

\newcommand{\makeamsbio}{%
	%
	\author{\authornameMiek}%
	\address{\authornameMiek, \addressNWU}%
	\email{\emailMiek}
}

\newcommand{\makeelsbio}{%
	\address[nwu]{\addressNWU}

	
	\author[nwu]{\authornameMiek\corref{cor1}}
	\ead{\emailMiek}
	\cortext[cor1]{Corresponding author}

}

		\newcommand{\subjectClassesForThisPaper}{%
		Primary: 46A20; 
		Secondary: %
			46B10\subjclasssep
			46B20\subjclasssep
			46A40\subjclasssep
			46B40\subjclasssep
			46L05\ignorespaces   	
}

\newcommand{\keywordsForThisPaper}{
	cone\keywordssep 
	geometric duality theory\keywordssep 
	Banach space\keywordssep 
	normality\keywordssep 
	conormality\keywordssep 
	additivity\keywordssep 
	coadditivity\keywordssep 
	C*-algebras\keywordssep 
}

\newcommand{\makeamsothermeta}{%
	\newcommand{\keywordssep}{, }
	\newcommand{\subjclasssep}{, }
	
	\keywords{\keywordsForThisPaper}
	\subjclass[2010]{\subjectClassesForThisPaper}
}

\newcommand{\makeelsothermeta}{%
	\newcommand{\keywordssep}{\sep }
	\newcommand{\subjclasssep}{\sep }

	\begin{keyword}%
		\keywordsForThisPaper%
		\MSC[2010]{\subjectClassesForThisPaper}%
	\end{keyword}%
}
%
		\IfStrEqCase{\docclass}{%
			{amsart}{\makeamsbio}%
			{elsart}{\makeelsbio}%
		}
%
		\IfStrEqCase{\docclass}{%
			{amsart}{\makeamsothermeta}%
			{elsart}{\makeelsothermeta}%
		}	
		\maketitle 
\section{Introduction}

The goal of this paper is to provide a general framework for proving
results in what may be termed the ``geometric duality theory'' of
cones in real Banach spaces. We will prove The General Duality Theorems
(Theorems \ref{thm:Normality-Duality} and \ref{thm:Additivity-Duality})
for dual pairs of real vector spaces and, as an application, we generalize
classical results for real pre-ordered Banach spaces (cf. Theorems
\ref{thm:classical-normality-duality} and \ref{thm:classical-additivity-duality})
to the context of real Banach spaces endowed with arbitrary collections
of closed cones (cf. Corollaries \ref{cor:general-banach-space-normality-duality-interpreted}
and \ref{cor:general-banach-space-additivity-duality-interpreted}). 

We begin with some motivating historical remarks:

And\^o's Theorem \cite[Lemma 1]{Ando}, a fundamental result in the
geometric theory of real pre-ordered  Banach spaces, states: 
\begin{thm}
\label{thm:Andos-theorem}Let $X$ be a real Banach space, pre-ordered
by a closed cone $X_{+}$ (in the sense that $x\leq y$ means $y\in x+X_{+}$).
If the cone $X_{+}$ generates $X$, i.e., $X=X_{+}-X_{+}$, then
there exists a constant $\alpha\geq1$, such that every $x\in X$
can be written as $x=a-b$ with $a,b\in X_{+}$ and $\max\{\norm a,\norm b\}\leq\alpha\norm x$.
\end{thm}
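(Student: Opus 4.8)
The plan is to adapt the Baire category argument behind the open mapping theorem, taking care to keep all approximating elements inside the cone $X_{+}$. For a real number $t>0$, set
\[
D_{t}:=\bigl\{\,u-v\;:\;u,v\in X_{+},\ \norm u\le t,\ \norm v\le t\,\bigr\},
\]
which is the image of the convex set $(X_{+}\cap t\closedball X)\times(X_{+}\cap t\closedball X)$ under the continuous linear map $(u,v)\mapsto u-v$, hence convex; it is also symmetric about the origin and positively homogeneous in $t$, i.e.\ $D_{st}=sD_{t}$ for $s>0$. The hypothesis $X=X_{+}-X_{+}$ says exactly that $X=\bigcup_{n\in\N}D_{n}$, since $x=a-b$ with $a,b\in X_{+}$ lies in $D_{n}$ as soon as $n\ge\max\{\norm a,\norm b\}$. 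Since $X$ is complete, Baire's theorem prevents all the closed sets $\closure{D_{n}}$ from being nowhere dense, so some $\closure{D_{n_{0}}}$ has an interior point; averaging that point with its reflection and using convexity and symmetry, $\closure{D_{n_{0}}}$ contains a ball $\rho\closedball X$ about the origin for some $\rho>0$. By homogeneity (and after replacing $\rho$ with $\rho/n_{0}$) I may therefore assume
\[
\closure{D_{2^{-k}}}\supseteq\rho\,2^{-k}\,\closedball X\qquad\text{for every integer }k\ge0.
\]

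The main step — the one I expect to carry the weight of the argument — is to remove the closure at the cost of only a bounded multiplicative constant, by successive approximation. Fix $x\in X$ with $\norm x\le\rho$. By the displayed inclusion with $k=0$ there are $a_{0},b_{0}\in X_{+}$ with $\norm{a_{0}},\norm{b_{0}}\le1$ and $\norm{x-(a_{0}-b_{0})}\le\rho/2$; then $x-(a_{0}-b_{0})\in\closure{D_{2^{-1}}}$, so there are $a_{1},b_{1}\in X_{+}$ with $\norm{a_{1}},\norm{b_{1}}\le 2^{-1}$ and $\norm{x-(a_{0}-b_{0})-(a_{1}-b_{1})}\le\rho/4$; iterating, I obtain $a_{k},b_{k}\in X_{+}$ with $\norm{a_{k}},\norm{b_{k}}\le 2^{-k}$ and $\norm{x-\sum_{j=0}^{k}(a_{j}-b_{j})}\le 2^{-(k+1)}\rho$ for all $k$. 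Since $\sum_{k}\norm{a_{k}}$ and $\sum_{k}\norm{b_{k}}$ are each dominated by $\sum_{k\ge0}2^{-k}=2$, the series $a:=\sum_{k\ge0}a_{k}$ and $b:=\sum_{k\ge0}b_{k}$ converge in the Banach space $X$; their partial sums lie in the convex cone $X_{+}$, which is closed, so $a,b\in X_{+}$, with $\norm a\le2$, $\norm b\le2$, and $a-b=\lim_{k}\sum_{j=0}^{k}(a_{j}-b_{j})=x$. Hence every $x$ with $\norm x\le\rho$ is of the form $a-b$ with $a,b\in X_{+}$ and $\max\{\norm a,\norm b\}\le2$, and positive homogeneity extends this to all $x\in X$ with the constant $\alpha:=\max\{1,\,2/\rho\}\ge1$.

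The only genuine obstacle is this passage from a category statement (``$x$ lies in the closure of $D_{1}$'') to an \emph{exact} decomposition $x=a-b$ with $a,b\in X_{+}$; this is precisely where completeness of $X$ and closedness of $X_{+}$ are used, through the telescoping geometric series above, while everything else — convexity, symmetry and homogeneity of the sets $D_{t}$, the Baire step, and the final rescaling — is routine bookkeeping. It is worth noting that this particular statement requires no separation or Hahn--Banach input, in contrast with the duality theorems proved later in the paper; the completeness of $X$, on the other hand, enters the proof essentially.
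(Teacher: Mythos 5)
Your proof is correct. Note that the paper does not actually prove this statement: it quotes And\^o's Theorem from the literature as motivation, and the closest it comes to a proof is the more general Theorem \ref{thm:general-ando-theorem}, which is itself cited from elsewhere. Your argument is the classical one: the sets $D_t$ are convex, symmetric and positively homogeneous, Baire's theorem gives $\rho\closedball X\subseteq\closure{D_{1}}$ after rescaling, and the telescoping geometric series removes the closure while keeping all summands in the closed cone $X_+$. All the steps check out — in particular the passage from an interior point of $\closure{D_{n_0}}$ to a ball about the origin via symmetry and convexity, and the absolute convergence of $\sum_k a_k$ and $\sum_k b_k$ with partial sums in $X_+$. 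It is worth observing that your ``main step'' is exactly an instance of the paper's Lemma \ref{lem:basic-cs-results}(6): with $A=D_{1}$ (which is cs-closed, being the sum of the cs-compact set $X_{+}\cap\closedball X$ and the cs-closed set $-(X_{+}\cap\closedball X)$, modulo the caveat about the two bounds), $G=\rho\closedball X$ and $D=G$, that lemma delivers $\rho\closedball X\subseteq\lambda D_{1}$ for all $\lambda>1$, which yields the conclusion with constant $\lambda\cdot(1/\rho)$ in place of your explicit $2/\rho$; your direct iteration is more elementary and gives a concrete constant, while the paper's abstraction is what allows the same mechanism to run in the weak topologies used in the General Duality Theorems, where norm-completeness is unavailable.
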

The geometric property given by the conclusion of And\^o's Theorem
is what is termed a `conormality property%
\footnote{The term `conormality' is due to Walsh \cite{Walsh}. Historically
this notion appears under various names in the literature, of which
`$\alpha$-generating' or `boundedly generating' is quite common.%
}' (cf. property (2)(a) in Theorem \ref{thm:classical-normality-duality}
for another example). 

It has long been known that there exists a geometric duality theory
for such relationships between norms and closed cones in real pre-ordered
Banach spaces. Loosely speaking, there is a dual notion to conormality,
called `normality%
\footnote{The term `normality' is due to Krein \cite{KreinNormality}.%
}' (cf. property~(1)(a) in Theorem \ref{thm:classical-normality-duality}
for a specific example of a normality property). Normality is `dual'
to conormality in the sense that a real pre-ordered Banach space has
a conormality property if and only if its dual has a corresponding
normality property, and vice versa. Many such dual pairs of normality--
and conormality properties have been discovered for real pre-ordered
Banach spaces. A fairly complete inventory of normality and conormality
properties and their relationships as dual properties is given with
full references in \cite[Definition 3.1, Theorem 3.7]{MesserschmidtNormalityOfSpacesOfOperators}. 

The following is a representative sample of duality results in this
vein. To the author's knowledge, first proofs of this particular result
date back to Grosberg and Krein \cite{GrossbergKrein}, and Ellis
\cite{Ellis}: 
\begin{thm}
\label{thm:classical-normality-duality}Let $\alpha\geq1$. Let $X$
be real Banach space, pre-ordered by a closed cone $X_{+}$. Let the
dual space $X'$ be pre-ordered by the dual cone $X_{+}':=\set{\phi\in X'}{\phi(X_{+})\subseteq\R_{\geq0}}$, where $\R_{\geq0}:=\{\lambda\in\R\mid\lambda\geq0\}$. 
\begin{enumerate}
\item The following are equivalent:

\begin{enumerate}
\item If $x,a,b\in X$ satisfy $a\leq x\leq b$, then $\norm x\leq\alpha\max\{\norm a,\norm b\}$.
\item For every $\phi\in X'$, there exist $\rho,\psi\in X_{+}'$ such
that $\phi=\rho-\psi$ and $\norm{\rho}+\norm{\psi}\leq\alpha\norm{\phi}$.
\end{enumerate}
\IfStrEqCase{\docclass}{{amsart}{}{elsart}{ \pagebreak }}
\item The following are equivalent:

\begin{enumerate}
\item For every $x\in X$ and $\beta>\alpha$, there exist $a,b\in X_{+}$
such that $x=a-b$ and $\norm a+\norm b\leq\beta\norm x$.
\item If $\rho,\phi,\psi\in X'$ satisfy $\rho\leq\phi\leq\psi$,
then $\norm{\phi}\leq\alpha\max\{\norm{\rho},\norm{\psi}\}$.
\end{enumerate}
\end{enumerate}
\end{thm}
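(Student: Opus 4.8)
The plan is to collapse each of the two ``if and only if'' statements onto a single scalar inequality and then read off the equivalence, with only one genuinely analytic step surviving.

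\emph{Part (1).} Introduce $n(x):=\max\{\operatorname{dist}(x,X_{+}),\operatorname{dist}(x,-X_{+})\}$ for $x\in X$. Given $x$, the admissible data in ``$a\le x\le b$'' are exactly $a\in x-X_{+}$ and $b\in x+X_{+}$, chosen independently, so $\inf\{\max(\norm a,\norm b):a\le x\le b\}=\max(\operatorname{dist}(x,X_{+}),\operatorname{dist}(x,-X_{+}))=n(x)$; hence (1)(a) says precisely $\norm x\le\alpha\,n(x)$ for all $x\in X$. On the dual side, $\phi$ admits a decomposition $\phi=\rho-\psi$ with $\rho,\psi\in X_{+}'$ and $\norm\rho+\norm\psi\le1$ if and only if $\phi$ lies in the (symmetric, convex) set $C:=\convexhull{\bigl((X_{+}'\cap\closedball{X'})\cup(-(X_{+}'\cap\closedball{X'}))\bigr)}$, so (1)(b) is the inclusion $\closedball{X'}\subseteq\alpha C$. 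Because $\closedball{X'}\cap X_{+}'$ is weak-$*$ compact, so is $C$, hence $\closedball{X'}\subseteq\alpha C$ is equivalent to $\norm x=h_{\closedball{X'}}(x)\le h_{\alpha C}(x)=\alpha\max\{h_{X_{+}'\cap\closedball{X'}}(x),\,h_{X_{+}'\cap\closedball{X'}}(-x)\}$ for all $x\in X$. The one computation needed is $h_{X_{+}'\cap\closedball{X'}}(x)=\operatorname{dist}(x,-X_{+})$: writing $X_{+}'\cap\closedball{X'}=(\closedball{X}-X_{+})^{\polarsymbol}$ (polar of a bounded set plus a cone) and invoking the bipolar theorem, the support function of this polar is the Minkowski gauge of $\overline{\closedball{X}-X_{+}}$, and since that set contains $\closedball X$ its gauge at $x$ is $\inf_{k\in X_{+}}\norm{x+k}=\operatorname{dist}(x,-X_{+})$. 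So (1)(b) also says $\norm x\le\alpha\,n(x)$ for all $x$, giving (1)(a)$\Leftrightarrow$(1)(b).

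\emph{Part (2).} The implication (2)(a)$\Rightarrow$(2)(b) is direct: if $\rho\le\phi\le\psi$ and $a\in X_{+}$, then $\rho(a)\le\phi(a)\le\psi(a)$ forces $\abs{\phi(a)}\le\max(\norm\rho,\norm\psi)\norm a$, so for a decomposition $x=a-b$ with $a,b\in X_{+}$ and $\norm a+\norm b\le\beta\norm x$ we get $\abs{\phi(x)}\le\beta\max(\norm\rho,\norm\psi)\norm x$; letting $\beta\downarrow\alpha$ and supremizing over $\norm x\le1$ gives $\norm\phi\le\alpha\max(\norm\rho,\norm\psi)$. For the converse, argue in two stages. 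First (soft duality): (2)(b) unwinds to $(\closedball{X'}+X_{+}')\cap(\closedball{X'}-X_{+}')\subseteq\alpha\closedball{X'}$, and the two sets intersected are weak-$*$ closed (weak-$*$ compact plus weak-$*$ closed), so their intersection is weak-$*$ closed and convex with pre-polar in $X$ equal to $\overline D$, where $D:=\convexhull{\bigl((X_{+}\cap\closedball{X})\cup(-(X_{+}\cap\closedball{X}))\bigr)}$; taking pre-polars turns (2)(b) into the ``approximate conormality'' statement $\closedball X\subseteq\alpha\,\overline D$. Second (the analytic stage): using that $X$ is complete and $X_{+}$ closed, run the successive-approximation scheme at the heart of And\^o's Theorem. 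Given $\norm x\le1$ and $\eta\in(0,1)$, set $x_{0}=x$ and recursively choose $d_{n}\in\norm{x_{n}}\,\alpha D$ with $\norm{x_{n}-d_{n}}<\eta\norm{x_{n}}$ and $x_{n+1}=x_{n}-d_{n}$; lifting each $d_{n}$ to $a_{n}-b_{n}$ with $a_{n},b_{n}\in X_{+}$ and $\norm{a_{n}}+\norm{b_{n}}\le\alpha\eta^{n}$ and summing gives $x=a-b$ with $a,b\in X_{+}$ and $\norm a+\norm b\le\alpha/(1-\eta)$. As $\eta$ is arbitrary this produces, for each $\beta>\alpha$, the decomposition required by (2)(a).

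The main obstacle is that last successive-approximation step: it is the only place completeness of $X$ is genuinely used, and it is exactly what breaks the symmetry between the two parts. In part (1) the dual ball is weak-$*$ compact, so $C$ is closed and separation already yields an \emph{exact} decomposition with no loss in the constant; in part (2) the predual ball is not norm-compact, duality only reaches $\overline D$ rather than $D$, and closing that gap costs an arbitrarily small factor $(1-\eta)^{-1}$ — which is precisely why (2)(a) is stated with ``$\beta>\alpha$''. Everything else is bookkeeping with polars, gauges, and support functions.
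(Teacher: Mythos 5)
Your proof is correct, but it reaches the result by a different route than the paper does. The paper never proves Theorem \ref{thm:classical-normality-duality} directly: it obtains it as the two-cone special case ($\Omega=\{1,2\}$, $C_{1}=X_{+}$, $C_{2}=-X_{+}$) of Corollary \ref{cor:general-banach-space-normality-duality-interpreted}, which in turn is deduced from the General Duality Theorem \ref{thm:Normality-Duality} applied to the four-set inclusions in the direct sums $\l p\omegadomain{X}$ and $\l q\omegadomain{X'}$, after computing the relevant one-sided polars (Lemma \ref{lem:Polars-of-sums-and-constants}). You instead collapse everything onto scalar quantities in $X$ and $X'$ themselves: in part (1) both conditions are shown to coincide with $\norm{x}\leq\alpha\max\{\textup{dist}(x,X_{+}),\textup{dist}(x,-X_{+})\}$ via the identity $h_{X_{+}'\cap\closedball{X'}}(x)=\textup{dist}(x,-X_{+})$, and in part (2) the dual inclusion is pulled back to $\closedball{X}\subseteq\alpha\overline{D}$ and the closure is removed by the And\^o-type successive-approximation scheme. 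The underlying ingredients are the same as the paper's --- one-sided polar calculus, weak-$*$ compactness of the dual ball for the exact direction, and a cs-closedness/geometric-series argument (the paper's Lemma \ref{lem:basic-cs-results}(6)) for the approximate direction with the loss $\beta>\alpha$ --- but your version is self-contained and more elementary, at the price of not generalizing beyond two cones, which is precisely what the paper's direct-sum formalism buys. All the individual steps check out: the identification of the admissible $(a,b)$ in (1)(a) as independent choices from $x-X_{+}$ and $x+X_{+}$, the description of the decomposable functionals as $\convexhull{\bigl((X_{+}'\cap\closedball{X'})\cup(-(X_{+}'\cap\closedball{X'}))\bigr)}$, the gauge computation (where one should note $\overline{\closedball{X}-X_{+}}\subseteq(1+\varepsilon)(\closedball{X}-X_{+})$ so that passing to the closure does not change the gauge), the pre-polar computation in part (2), and the convergence of $\sum a_{n}$, $\sum b_{n}$ to elements of the closed cone $X_{+}$ with $\norm{a}+\norm{b}\leq\alpha/(1-\eta)$.
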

Interest in normality and conormality properties is usually traced
to their influence on the order structure of the bounded operators
between real pre-ordered Banach spaces (cf. \cite[Section 3.4]{MesserschmidtNormalityOfSpacesOfOperators}
and \cite[Section 1.7]{BattyRobinson}).

Further results in the geometric duality theory of real pre-ordered
Banach spaces are the duality results between so-called `additivity
properties' (e.g., property (1)(a) in Theorem \ref{thm:classical-additivity-duality}
below) and `coadditivity properties%
\footnote{Historically, this property is called `directedness'. We prefer the
term `coadditivity' as mnemonic for illustrating the property being
dual to notion of `additivity', which parallels that of the terms
`normality' and `conormality'.%
}' (e.g., property (2)(a) in Theorem \ref{thm:classical-additivity-duality}
below). To the author's knowledge, Theorem \ref{thm:classical-additivity-duality}
was first established by Asimow and Ellis \cite{Asimow}, \cite[Theorems 3.5 and 3.8]{AsimowEllis}.
Wong and Ng also established a related result in \cite[Lemmas 9.24 and 9.25]{WongNg}.
\begin{thm}
\label{thm:classical-additivity-duality}Let $\alpha\geq1$ and $n\in\N$.
Let $X$ be a real Banach space, pre-ordered by a closed cone $X_{+}$.
Let the dual space $X'$ be pre-ordered by the dual cone $X_{+}':=\set{\phi\in X'}{\phi(X_{+})\subseteq\R_{\geq0}}$, where $\R_{\geq0}:=\{\lambda\in\R\mid\lambda\geq0\}$. 
\IfStrEqCase{\docclass}{{amsart}{\pagebreak }{elsart}{ }}
\begin{enumerate}
\item The following are equivalent:

\begin{enumerate}
\item If $\{x_{j}\}_{j=1}^{n}\subseteq X_{+}$, then $\sum_{j=1}^{n}\norm{x_{j}}\leq\alpha\norm{\sum_{j=1}^{n}x_{j}}$.	
\item For any $\{\phi_{j}\}_{j=1}^{n}\subseteq X'$, there exists some $\varphi\in X'$
such that $\phi_{j}\leq\varphi$ for all $j\in\{1,\ldots,n\}$ and
$\norm{\varphi}\leq\alpha\max_{j}\{\norm{\phi_{j}}\}$.
\end{enumerate}
\item The following are equivalent:

\begin{enumerate}
\item For any $\{x_{j}\}_{j=1}^{n}\subseteq X$ and $\beta>\alpha$, there
exists some $y\in X$ such that $x_{j}\leq y$ for all $j\in\{1,\ldots,n\}$
and $\norm y\leq\beta\max_{j}\{\norm{x_{j}}\}$.
\item If $\{\phi_{j}\}_{j=1}^{n}\subseteq X_{+}'$, then $\sum_{j=1}^{n}\norm{\phi_{j}}\leq\alpha\norm{\sum_{j=1}^{n}\phi_{j}}$.
\end{enumerate}
\end{enumerate}
\end{thm}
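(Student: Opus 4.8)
The four displayed conditions split into two dual classes: (1)(a) and (2)(b) are \emph{additivity} statements, bounding the sum $\sum_j\norm{\cdot}$ over finitely many positive vectors by $\norm{\sum_j\cdot}$ (for the cones $X_+\subseteq X$ and $X_+'\subseteq X'$ respectively), while (1)(b) and (2)(a) are \emph{coadditivity} (directedness) statements, producing for finitely many vectors of $X'$ resp.\ $X$ a common upper bound of comparable norm. Each of the two equivalences is thus an instance of the principle ``$Y$ is additive if and only if $Z$ is coadditive'' for a dual pair $(Y,Z)$, applied to $(X,X')$ and to $(X',X'')$; this is precisely what the General Additivity Duality Theorem (Theorem~\ref{thm:Additivity-Duality}) is meant to deliver, so the real work is to check that its hypotheses hold for these two canonical pairings (norm-closed cones, the evaluation duality, and the closed unit balls, whose polars are weak-$*$ compact by the Banach--Alaoglu theorem). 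I will nonetheless indicate a direct argument.

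The implications (1)(b)$\Rightarrow$(1)(a) and (2)(a)$\Rightarrow$(2)(b) are the elementary ones and I would settle them first by the same device. For the former: given $x_1,\dots,x_n\in X_+$, use Hahn--Banach to choose $\phi_j\in X'$ with $\norm{\phi_j}\le 1$ and $\phi_j(x_j)=\norm{x_j}$; an upper bound $\varphi$ of $\{\phi_j\}$ as provided by (1)(b) has $\norm{\varphi}\le\alpha$, and since $\varphi-\phi_j\in X_+'$ and $x_j\in X_+$ it satisfies $\varphi(x_j)\ge\phi_j(x_j)=\norm{x_j}$; summing over $j$ and using $\varphi(\sum_j x_j)\le\alpha\norm{\sum_j x_j}$ yields (1)(a). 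The implication (2)(a)$\Rightarrow$(2)(b) is the mirror image: given $\phi_1,\dots,\phi_n\in X_+'$, pick $x_j\in X$ with $\norm{x_j}\le 1$ and $\phi_j(x_j)\ge\norm{\phi_j}-\varepsilon$, apply (2)(a) with a fixed $\beta>\alpha$ to obtain an upper bound $y$ of $\{x_j\}$, note $\phi_j(y)\ge\phi_j(x_j)$, sum, and let first $\varepsilon\downarrow 0$ and then $\beta\downarrow\alpha$.

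For (1)(a)$\Rightarrow$(1)(b): a functional $\varphi\in X'$ is an upper bound of $\{\phi_j\}$ exactly when $\varphi(x)\ge q(x):=\max_j\phi_j(x)$ for every $x\in X_+$. As $q$ and $p:=\alpha M\norm{\cdot}$ (with $M:=\max_j\norm{\phi_j}$) are sublinear on $X$, a Hahn--Banach/Mazur--Orlicz sandwich argument produces a linear $\varphi$ with $\varphi\le p$ on $X$ and $\varphi\ge q$ on $X_+$ --- hence $\varphi\in X'$ with $\norm{\varphi}\le\alpha M$ and $\varphi-\phi_j\in X_+'$ --- provided one verifies $\sum_i q(b_i)\le\alpha M\norm{\sum_i b_i}$ for every finite family $\{b_i\}\subseteq X_+$. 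This is where (1)(a) enters, and where the index $n$ is essential: choosing for each $i$ an index $j(i)$ with $q(b_i)=\phi_{j(i)}(b_i)$ and setting $B_j:=\sum_{i:\,j(i)=j}b_i\in X_+$ for $j=1,\dots,n$, one gets $\sum_i q(b_i)=\sum_{j=1}^n\phi_j(B_j)\le M\sum_{j=1}^n\norm{B_j}\le\alpha M\norm{\sum_{j=1}^n B_j}=\alpha M\norm{\sum_i b_i}$, the middle inequality being (1)(a) applied to $B_1,\dots,B_n$.

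The implication (2)(b)$\Rightarrow$(2)(a) is the one I expect to be the main obstacle. I would first run the previous argument one level up, in the duality between $X'$ and $X''$: writing $\iota\colon X\hookrightarrow X''$ for the canonical embedding and $X_+''$ for the dual cone of $X_+'$ in $X''$, condition (2)(b) is precisely the ``(1)(a)-type'' hypothesis for the cone $X_+'\subseteq X'$, so the same $n$-bin grouping gives $\mu\in X''$ with $\norm{\mu}\le\alpha m$ (where $m:=\max_j\norm{x_j}$) and $\mu-\iota(x_j)\in X_+''$ for all $j$, i.e.\ an upper bound of $\{x_j\}$ living in the bidual. What remains is to descend to a genuine $y\in X$ with $y\ge x_j$ for all $j$ and $\norm{y}\le\beta m$ for a prescribed $\beta>\alpha$. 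This is the delicate step: $\iota(X_+)$ is only weak-$*$ dense in $X_+''$, so a single (Goldstine) approximation places $y$ only \emph{approximately} above the $x_j$, and to obtain an exact upper bound one must iterate and sum the corrections, using completeness of $X$ and norm-closedness of $X_+$ --- the Baire-category mechanism that also underlies And\^o's Theorem (Theorem~\ref{thm:Andos-theorem}). This successive-approximation loss is exactly why the conclusion of (2)(a) carries the strict inequality $\beta>\alpha$ rather than $\beta=\alpha$, and it is the technical core that the paper's abstract framework is built to absorb once and for all.
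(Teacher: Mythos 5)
Three of your four implications are complete and correct, and your route is genuinely more elementary than the paper's. The two easy directions are fine, and your proof of (1)(a)$\Rightarrow$(1)(b) by Mazur--Orlicz --- with the sandwich condition $\sum_i q(b_i)\leq\alpha M\norm{\sum_i b_i}$ verified by regrouping the $b_i$ into the $n$ bins $B_j=\sum_{i:\,j(i)=j}b_i\in X_{+}$ and applying (1)(a) to $B_1,\dots,B_n$ --- is a clean, self-contained argument. The paper instead obtains the whole theorem as a special case of its abstract framework: it realizes (1)(a)/(2)(b) and (1)(b)/(2)(a) as additivity and coadditivity inclusions in the dual pairs $\bigl(\ell^{1}(\{1,\dots,n\},X),\ell^{\infty}(\{1,\dots,n\},X')\bigr)$ and $\bigl(\ell^{\infty}(\{1,\dots,n\},X),\ell^{1}(\{1,\dots,n\},X')\bigr)$, computes one-sided polars, and invokes Theorem \ref{thm:Additivity-Duality} (via Theorem \ref{thm:general-banach-space-additivity-duality} and Corollary \ref{cor:general-banach-space-additivity-duality-interpreted}). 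What your approach buys is concreteness; what the paper's buys is that the same machine handles infinite families of cones.

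The gap is in (2)(b)$\Rightarrow$(2)(a), and it is precisely at the step you justify by ``a single (Goldstine) approximation.'' Goldstine by itself does not place $y$ approximately above the $x_j$: weak-$*$ approximation of $\mu$ by elements $\iota(y)$ of the ball gives no control whatsoever on $\textup{dist}(y-x_j,X_{+})$. What is actually needed, and what must be proved, is that $(x_1,\dots,x_n)$ lies in the \emph{norm} closure of the convex set $S:=\set{(y-c_1,\dots,y-c_n)}{\norm y\leq\alpha m,\ c_j\in X_{+}}$ inside $\ell^{\infty}(\{1,\dots,n\},X)$, where $m:=\max_j\norm{x_j}$. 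This follows from a separation argument that uses (2)(b) directly and makes your bidual detour superfluous: if not, separate by some $(\eta_1,\dots,\eta_n)\in\ell^{1}(\{1,\dots,n\},X')$; positive homogeneity in the $c_j$-directions forces each $\eta_j\in X_{+}'$, whence $\sum_j\eta_j(x_j)\leq m\sum_j\norm{\eta_j}\leq\alpha m\norm{\sum_j\eta_j}=\sup_{w\in S}\duality w{\eta}$, contradicting strict separation. (This is exactly the paper's Lemma \ref{lem:elementary-normality-polars}.) Once that density is established, the iteration you sketch is correct and is the paper's Lemma \ref{lem:basic-cs-results}(6) --- a geometric-series successive approximation using completeness of $X$ and closedness of $X_{+}$, which is indeed the source of the strict inequality $\beta>\alpha$ --- but it is not a Baire-category argument: unlike in And\^o's Theorem, the initial density here comes from duality, not from an algebraic generating hypothesis. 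So your strategy is completable, but the crux of the hard direction is asserted rather than proved, and the justification offered for it does not suffice as stated.
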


In \cite{WicksteadCompact} Wickstead established a connection between
additivity and coadditivity properties and order-boundedness of norm-precompact
sets in real pre-ordered Banach spaces.

Recently, in \cite{deJeuMesserschmidtOpenMapping} it was shown that
And\^o's Theorem (Theorem \ref{thm:Andos-theorem}, above) is a specific
case of a more general result \cite[Theorem 4.1]{deJeuMesserschmidtOpenMapping}:
\begin{thm}
\label{thm:general-ando-theorem}Let $X$ be a real or complex Banach
space and $\curly{C_{\omega}}_{\omega\in\Omega}$ a collection of
closed cones in $X$. If the collection of cones $\curly{C_{\omega}}_{\omega\in\Omega}$
generate $X$, i.e., if every $x\in X$ can be written as $x=\sum_{\omega\in\Omega}c_{\omega}$,
where $c_{\omega}\in C_{\omega}$ for all $\omega\in\Omega$, and
$\sum_{\omega\in\Omega}\norm{c_{\omega}}<\infty$, then there exists
a constant $\alpha\geq1$ such that every $x\in X$ can be written
as $x=\sum_{\omega\in\Omega}c_{\omega}$, where $c_{\omega}\in C_{\omega}$
for all $\omega\in\Omega$, and $\sum_{\omega\in\Omega}\norm{c_{\omega}}\leq\alpha\norm x$.\end{thm}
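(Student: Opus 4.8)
The plan is to derive the theorem from a Baire-category argument after packaging the family $\curly{C_{\omega}}_{\omega\in\Omega}$ into a single closed convex cone in an auxiliary Banach space, so that one may run the classical proof of And\^o's Theorem (Theorem~\ref{thm:Andos-theorem}) in an enlarged setting. Let $E$ be the $\ell^{1}$-direct sum $\directsum_{\omega\in\Omega}X$, i.e.\ the Banach space of families $x=(x_{\omega})_{\omega\in\Omega}$ with $x_{\omega}\in X$ and $\norm x:=\sum_{\omega\in\Omega}\norm{x_{\omega}}<\infty$ (each such family having countable support), and put $\mathcal{C}:=\curly{(x_{\omega})_{\omega\in\Omega}\in E:x_{\omega}\in C_{\omega}\ \text{for all}\ \omega\in\Omega}$. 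Since each $C_{\omega}$ is a closed convex cone and the coordinate maps $E\to X$ are contractive, $\mathcal{C}$ is a closed convex cone in $E$. Let $S\colon E\to X$ be the summation map $S((x_{\omega})_{\omega\in\Omega}):=\sum_{\omega\in\Omega}x_{\omega}$; it is well defined since the sum is absolutely convergent, it is linear, and $\norm S\leq1$. Then the hypothesis that $\curly{C_{\omega}}_{\omega\in\Omega}$ generates $X$ says exactly that $S(\mathcal{C})=X$, and a preimage $c=(c_{\omega})_{\omega\in\Omega}\in\mathcal{C}$ of $x$ with $\norm c\leq\alpha\norm x$ is precisely a decomposition $x=\sum_{\omega}c_{\omega}$ with $c_{\omega}\in C_{\omega}$ for all $\omega$ and $\sum_{\omega}\norm{c_{\omega}}\leq\alpha\norm x$; hence it suffices to produce $\alpha\geq1$ with $S(\mathcal{C}\cap\alpha B_{E})\supseteq B_{X}$, where $B_{E}$, $B_{X}$ are the closed unit balls of $E$, $X$.

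To produce such an $\alpha$, I would reproduce the proof of And\^o's Theorem with the single generating cone replaced by $\mathcal{C}$. The set $A:=S(\mathcal{C}\cap B_{E})$ is convex, contains $0$, and satisfies $X=\bigcup_{n\in\N}nA$ (for $x=Sc$ with $c\in\mathcal{C}$ one has $\lambda x=S(\lambda c)\in A$ whenever $0<\lambda\leq 1/\norm c$), so $X=\bigcup_{n}n\overline{A}$ is a countable union of closed sets and Baire's theorem (using completeness of $X$) gives that $\overline{A}$ has nonempty interior. Unlike in the single-cone case $\overline{A}$ need not be symmetric, so to see that $0$ is an \emph{interior} point I would invoke that a convex set $A\ni 0$ with $X=\bigcup_n nA$ admits no nonzero continuous supporting functional at $0$: a supporting hyperplane to the closed convex set $\overline{A}$ at $0$ would yield $\phi\neq0$ with $\phi\leq0$ on $\overline{A}$, whence $\phi\leq 0$ on all of $X$ and so $\phi=0$, a contradiction. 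Thus $r B_{X}\subseteq\overline{A}$ for some $r>0$, and by homogeneity of $\mathcal{C}$ also $\lambda r B_{X}\subseteq\overline{S(\mathcal{C}\cap\lambda B_{E})}$ for all $\lambda>0$. A standard successive-approximation step then removes the closure at the cost of a factor $2$: given $\norm x<r$, pick inductively $c_{n}\in\mathcal{C}$ with $\norm{c_{n}}\leq 2^{-(n-1)}$ and $\norm{x-\sum_{j=1}^{n}Sc_{j}}<r2^{-n}$; then $c:=\sum_{n\geq1}c_{n}$ converges in $E$ (using completeness of $E$) with $\norm c\leq2$, lies in $\mathcal{C}$ as the limit of its partial sums since $\mathcal{C}$ is closed, and satisfies $Sc=x$ by continuity of $S$. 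Hence $r B_{X}\subseteq S(\mathcal{C}\cap 2B_{E})$, and rescaling produces, for every $x\in X$, a preimage $c\in\mathcal{C}$ with $\norm c\leq\alpha\norm x$ where $\alpha:=1+2/r\geq1$; unwinding the definitions of $E$, $\mathcal{C}$, $S$ yields the theorem. (The same argument applies verbatim when $X$ is complex, since cones are only required closed under multiplication by nonnegative reals.)

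I do not expect a deep obstacle: this is essentially the classical Baire-category proof of And\^o's Theorem transplanted to $\mathcal{C}$. What genuinely needs attention, rather than any new idea, is (i) verifying that $E=\directsum_{\omega\in\Omega}X$ is complete and that $S$ is everywhere defined and continuous on $E$, which is what legitimizes forming $\sum_{n}c_{n}$ inside $E$ and commuting it past $S$; (ii) the separation step centering $\overline{A}$ at the origin, the one place where the asymmetry of a cone (as opposed to a subspace) intervenes and the single-cone proof's use of symmetry must be replaced; and (iii) routine bookkeeping for the possibly uncountable index set $\Omega$ and the unordered sums involved. I would expect \cite{deJeuMesserschmidtOpenMapping} to isolate an open-mapping-type lemma — that a bounded linear map carrying a closed cone of a Banach space \emph{onto} a Banach space does so with a uniform linear bound on preimages — and then apply it to $S$ and $\mathcal{C}$ exactly as above.
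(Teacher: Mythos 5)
Your argument is correct. Note, though, that the paper does not prove this statement at all: it is quoted verbatim from \cite[Theorem 4.1]{deJeuMesserschmidtOpenMapping}, and your proposal is essentially the proof given there --- package the family into the product cone $\mathcal{C}$ inside the $\ell^{1}$-direct sum, observe that the summation operator maps $\mathcal{C}$ onto $X$, and apply a Baire-category open-mapping argument for cones (with the supporting-functional step correctly replacing the symmetry used in the classical single-cone case, and the successive-approximation step correctly exploiting completeness of the $\ell^{1}$-sum and closedness of $\mathcal{C}$); the only content of the reference you do not recover is the additional continuity/positive-homogeneity of the decomposition maps obtained there via Michael's Selection Theorem, which the paper's subsequent remark mentions but the stated theorem does not require.
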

\begin{rem}
As an aside, we note that \cite[Theorem 4.1]{deJeuMesserschmidtOpenMapping},
in fact, shows that this decomposition can not only be done in a bounded
manner, but also (through an application of Michael's Selection Theorem)
in a continuous manner: For all $\omega\in\Omega$, the maps $x\mapsto c_{\omega}$
may be chosen so as to be positively homogeneous, bounded and continuous.
\end{rem}
In comparing And\^o's Theorem and Theorem \ref{thm:general-ando-theorem},
that the two cones $X_{+}$ and $-X_{+}$ in a real pre-ordered Banach
space $X$ are related by a minus sign is pure coincidence. What is
at play here is how a collection of cones in a Banach space interact,
where, in a real pre-ordered Banach space, we merely happen to study
interactions of collections of cones comprised of copies of $X_{+}$
and $-X_{+}$. As Theorem \ref{thm:general-ando-theorem} shows, a
version of And\^o's Theorem still holds regardless of whether the
cones are related by minus signs, granted that the entire collection
of cones generates the space. 

Given the analogous form of the conclusion of Theorem \ref{thm:general-ando-theorem}
to the classical conormality property (2)(a) from Theorem \ref{thm:classical-normality-duality},
a natural question to ask is whether classical geometric duality theory
for real pre-ordered Banach spaces (e.g., Theorems \ref{thm:classical-normality-duality}
and \ref{thm:classical-additivity-duality}) carries over to the more
general situation of Theorem \ref{thm:general-ando-theorem}. Establishing
this is the goal of the rest of this paper.

We follow a purely geometric route to this end. We give four general
geometric conditions (listed below) on interactions between two cones
$C$ and $D$ and two convex sets $B_{1}$ and $B_{2}$ containing
zero in a real vector space $Y$. The space $Y$ will be said to be
\emph{normal}, \emph{additive}, \emph{conormal} or \emph{coadditive}
with respect to $(C,D,B_{1},B_{2})$, if respectively (1), (2), (3)
or (4) in the following list of inclusions hold:
\begin{enumerate}
\item $(B_{2}+C)\cap D\subseteq B_{1}$
\item $(B_{2}\cap C)+D\subseteq B_{1}$
\item $B_{1}\subseteq(B_{2}\cap C)+D$
\item $B_{1}\subseteq(B_{2}+C)\cap D$ 
\end{enumerate}
Depending on the choice of $Y$ and the sets $C,D,B_{1}$ and $B_{2}$,
the properties listed in Theorems \ref{thm:classical-normality-duality}
and \ref{thm:classical-additivity-duality} can be shown to be equivalent
to one of the above set inclusions holding true, e.g., a real pre-ordered
Banach space $X$ satisfies the normality property (1)(a) from Theorem
\ref{thm:classical-normality-duality} if and only if the $\l{\infty}$-direct
sum $Y:=X\oplus_{\infty}X$ is normal with respect to $(C,D,B_{1},B_{2})$,
with $C:=X_{+}\oplus_{\infty}(-X_{+})$, $D:=\set{(x,x)\in X\oplus_{\infty}X}{x\in X}$,
$B_{1}:=\set{(x,x)\in X\oplus_{\infty}X}{x\in X,\ \norm x\leq\alpha}$
and $B_{2}:=\set{(x,y)\in X\oplus_{\infty}X}{\norm{(x,y)}_{\infty}\leq1}$.
Similarly, the remaining properties from Theorems \ref{thm:classical-normality-duality}
and \ref{thm:classical-additivity-duality} can be shown to be equivalent
to one of the above set inclusions for specific choices of $Y$ and
the sets $C,D,B_{1}$ and $B_{2}$. Furthermore, this definition is
also general enough to encompass more general properties than those
that occur in real pre-ordered Banach spaces, e.g., the conclusion
of Theorem \ref{thm:general-ando-theorem} is equivalent to the $\l 1$-direct
sum of $\abs{\Omega}$ copies of $X$, $Y:=\l 1\omegadomain X$, being
conormal with respect to $(C,D,B_{1},B_{2})$, where $C:=\bigoplus_{\omega\in\Omega}C_{\omega}\subseteq\l 1\omegadomain X$,
$D:=\set{\xi\in\l 1\omegadomain X}{\sum_{\omega\in\Omega}\xi_{\omega}=0}$,
$B_{1}:=\set{\xi\in\l 1\omegadomain X}{\norm{\sum_{\omega\in\Omega}\xi_{\omega}}\leq1}$
and $B_{2}:=\set{\xi\in\l 1\omegadomain X}{\norm{\xi}_{1}\leq\alpha}$.

We show, using Lemma \ref{lem:elementary-normality-polars} (an elementary
application of the one-sided polar calculus) and the conditions outlined
in the General Duality Theorems (Theorems \ref{thm:Normality-Duality}
and \ref{thm:Additivity-Duality}), that normality (additivity) and
conormality (coadditivity), as defined above, are dual notions. I.e.,
for a dual pair of real vector spaces $(Y,Z)$ with $C,D,B_{1}$ and
$B_{2}$ subsets of $Y$ as above, we give conditions under which
$Y$ is normal (additive) with respect to $(C,D,B_{1},B_{2})$ if
and only if $Z$ is conormal (coadditive) with respect to the one-sided
polars $(\bottompolar C,\bottompolar D,\bottompolar{B_{1}},\bottompolar{B_{2}})$,
and vice versa.

The General Duality Theorems (Theorems \ref{thm:Normality-Duality}
and \ref{thm:Additivity-Duality}) hence provide a general framework
for proving results in the geometric duality theory for cones in real
Banach spaces (and real pre-ordered Banach spaces as a specific case).
To apply these results, one is only required to find a relevant dual
pair of real vector spaces $(Y,Z)$ and sets $C$, $D$, $B_{1}$
and $B_{2}$ in $Y$, calculate their one-sided polars in $Z$, and
verify the hypotheses of one of the General Duality Theorems (Theorems
\ref{thm:Normality-Duality} and \ref{thm:Additivity-Duality}) to
obtain a duality result in the vein of Theorems \ref{thm:classical-normality-duality}
and \ref{thm:classical-additivity-duality}. 

Following this program, generalizing Theorems \ref{thm:classical-normality-duality}
and \ref{thm:classical-additivity-duality} above to real Banach spaces
endowed with arbitrary collections of closed cones then becomes a
matter of routine (cf. Section \ref{sec:application-geometric-duality-in-Banach-spaces}).
We explicitly state our Corollaries \ref{cor:general-banach-space-normality-duality-interpreted}
and \ref{cor:general-banach-space-additivity-duality-interpreted}
of Theorems~\ref{thm:general-banach-space-normality-duality} and
\ref{thm:general-banach-space-additivity-duality}, which directly
generalize Theorems \ref{thm:classical-normality-duality} and \ref{thm:classical-additivity-duality}
above:

\setcounter{section}{4}
\setcounter{thm}{6}
\begin{cor}
Let $X$ be a real Banach space and $\{C_{\omega}\}_{\omega\in\Omega}$
a collection of closed cones in $X$ and $\alpha\geq1$.
\begin{enumerate}
\item The following are equivalent:

\begin{enumerate}
\item If $\xi\in\conv\omegadomain X$ and $x\in\bigcap_{\omega\in\Omega}\parenth{\xi_{\omega}+C_{\omega}}$,
then $\norm x\leq\alpha\norm{\xi}_{\infty}$.
\item For every $\phi\in X'$, there exists an element $\eta\in\directsum_{\omega\in\Omega}\bottompolar{C_{\omega}}\subseteq\l 1\omegadomain{X'}$
such that $\phi=\sum_{\omega\in\Omega}\eta_{\omega}$ and $\norm{\eta}_{1}\leq\alpha\norm{\phi}$.
\end{enumerate}
\item The following are equivalent:

\begin{enumerate}
\item For any $x\in X$ and $\beta>\alpha$, there exists an element $\xi\in\directsum_{\omega\in\Omega}C_{\omega}\subseteq\l 1\omegadomain X$
such that $x=\sum_{\omega\in\Omega}\xi_{\omega}$ and $\norm{\xi}_{1}\leq\beta\norm x$.
\item If $\eta\in\l{\infty}\omegadomain{X'}$ and $\phi\in\bigcap_{\omega\in\Omega}\parenth{\eta+\bottompolar{C_{\omega}}}$,
then $\norm{\phi}\leq\alpha\norm{\eta}_{\infty}$.
\end{enumerate}
\end{enumerate}

\noindent If, in addition, $\abs{\Omega}<\infty$ and $1\leq p,q\leq\infty$,
with $p^{-1}+q^{-1}=1$, then:
\begin{enumerate}
\item [\listitemhack{3}]The following are equivalent:

\begin{enumerate}
\item If $\xi\in\l p\omegadomain X$ and $x\in\bigcap_{\omega\in\Omega}\parenth{\xi_{\omega}+C_{\omega}}$,
then $\norm x\leq\alpha\norm{\xi}_{p}$.
\item For every $\phi\in X'$, there exists an element $\eta\in\directsum_{\omega\in\Omega}\bottompolar{C_{\omega}}\subseteq\l q\omegadomain{X'}$
such that $\phi=\sum_{\omega\in\Omega}\eta_{\omega}$ and $\norm{\eta}_{q}\leq\alpha\norm{\phi}$.
\end{enumerate}
\IfStrEqCase{\docclass}{{amsart}{\pagebreak }{elsart}{ }}
\item [\listitemhack{4}]The following are equivalent:

\begin{enumerate}
\item For every $x\in X$ and $\beta>\alpha$, there exists an element $\xi\in\directsum_{\omega\in\Omega}C_{\omega}\subseteq\l p\omegadomain X$
such that $x=\sum_{\omega\in\Omega}\xi_{\omega}$ and $\norm{\xi}_{p}\leq\beta\norm x$.
\item If $\eta\in\l q\omegadomain{X'}$ and $\phi\in\bigcap_{\omega\in\Omega}\parenth{\eta+\bottompolar{C_{\omega}}}$,
then $\norm{\phi}\leq\alpha\norm{\eta}_{q}$.
\end{enumerate}
\end{enumerate}
\end{cor}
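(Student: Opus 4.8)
The plan is to obtain each of the four equivalences by reading it off the General Duality Theorem for normality and conormality (Theorem~\ref{thm:Normality-Duality}, in its Banach-space form Theorem~\ref{thm:general-banach-space-normality-duality}), applied to a suitable dual pair $(Y,Z)$ together with suitable subsets $C,D,B_{1},B_{2}\subseteq Y$; once these are fixed, the corollary follows by computing the one-sided polars in $Z$ and unwinding the defining inclusions for normality and conormality from the Introduction. For parts~(1) and~(3) I would take $(Y,Z):=(\conv\omegadomain X,\l 1\omegadomain{X'})$ --- respectively $(\l p\omegadomain X,\l q\omegadomain{X'})$ when $\abs\Omega<\infty$ --- under the canonical bilinear form $(\xi,\eta)\mapsto\sum_{\omega\in\Omega}\eta_{\omega}(\xi_{\omega})$; for parts~(2) and~(4) I would take $(Y,Z):=(\l 1\omegadomain X,\l\infty\omegadomain{X'})$ --- respectively $(\l p\omegadomain X,\l q\omegadomain{X'})$ when $\abs\Omega<\infty$ --- under the same form. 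Each of these is a separating dual pairing; the one non-obvious choice is to pair $\l 1\omegadomain{X'}$ with $\conv\omegadomain X$ rather than with the larger $\l\infty\omegadomain X$, and this is dictated by the weak-$*$ compactness hypotheses of the General Duality Theorem, which I address at the end.

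In every case set $C:=\set{\zeta\in Y}{\zeta_{\omega}\in C_{\omega}\text{ for all }\omega\in\Omega}$, i.e.\ $\directsum_{\omega\in\Omega}C_{\omega}$ read inside $Y$. For parts~(1),(3) take $D:=\set{(x)_{\omega\in\Omega}}{x\in X}$ (the range of the constant embedding of $X$ into $Y$), $B_{2}:=\closedball Y$ the closed unit ball, and $B_{1}:=\alpha\set{(x)_{\omega\in\Omega}}{x\in X,\ \norm{x}\leq1}$; then $(B_{2}+C)\cap D$ is precisely the set of constant families $(x)_{\omega\in\Omega}$ for which $x\in\bigcap_{\omega\in\Omega}(\xi_{\omega}+C_{\omega})$ for some $\xi\in Y$ with $\norm{\xi}\leq1$, so after rescaling by positive homogeneity the normality inclusion $(B_{2}+C)\cap D\subseteq B_{1}$ is exactly condition (1)(a), respectively (3)(a). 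For parts~(2),(4) take $D:=\ker q$, where $q\colon Y\to X$ is the summation map $q(\xi)=\sum_{\omega\in\Omega}\xi_{\omega}$, $B_{1}:=\set{\xi\in Y}{\norm{\sum_{\omega\in\Omega}\xi_{\omega}}\leq1}$, and $B_{2}:=\alpha\closedball Y$; then $(B_{2}\cap C)+D$ is precisely the set of $\zeta\in Y$ whose sum $\sum_{\omega\in\Omega}\zeta_{\omega}$ equals $\sum_{\omega\in\Omega}\xi_{\omega}$ for some $\xi\in\directsum_{\omega\in\Omega}C_{\omega}$ with $\norm{\xi}\leq\alpha$, so after rescaling the conormality inclusion $B_{1}\subseteq(B_{2}\cap C)+D$ is exactly the decomposition statement in (2)(a), respectively (4)(a) --- up to the strict inequality, which I return to.

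Next I would compute the one-sided polars in $Z$ using the one-sided polar calculus (Lemma~\ref{lem:elementary-normality-polars} and the standard rules for polars of sums, intersections, preimages, and unit balls): $\bottompolar C=\set{\eta\in Z}{\eta_{\omega}\in\bottompolar{C_{\omega}}\text{ for all }\omega\in\Omega}$ (the coordinatewise polar cone); the one-sided polar of the diagonal of $X$ is $\set{\eta\in Z}{\sum_{\omega\in\Omega}\eta_{\omega}=0}$, while the one-sided polar of $\ker q$ is the diagonal $\set{(\phi)_{\omega\in\Omega}}{\phi\in X'}\subseteq Z$; the polar of the diagonal of $\closedball X$ is $\set{\eta\in Z}{\norm{\sum_{\omega\in\Omega}\eta_{\omega}}\leq1}$ and, dually, the polar of $\set{\xi\in Y}{\norm{\sum_{\omega\in\Omega}\xi_{\omega}}\leq1}$ is the diagonal of $\closedball{X'}$; and the polar of $\closedball Y$ is $\closedball Z$. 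Substituting these into the abstract conormality inclusion for $Z$ in parts~(1),(3) and rescaling yields exactly (1)(b), respectively (3)(b) --- the element of $\directsum_{\omega\in\Omega}\bottompolar{C_{\omega}}$ coming from $\bottompolar C$, the equation $\phi=\sum_{\omega\in\Omega}\eta_{\omega}$ from the polar of the diagonal --- and substituting into the abstract normality inclusion for $Z$ in parts~(2),(4) yields exactly (2)(b), respectively (4)(b), with the cross-section $\phi\in\bigcap_{\omega\in\Omega}(\eta_{\omega}+\bottompolar{C_{\omega}})$ arising from $\bottompolar C$ and $\bottompolar{(\ker q)}$ just as $x\in\bigcap_{\omega\in\Omega}(\xi_{\omega}+C_{\omega})$ did in the primal computation. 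The hypotheses of the General Duality Theorem then reduce to: the $C_{\omega}$ being closed (given), $\alpha\geq1$ (given), the conditions $\abs\Omega<\infty$ and $p^{-1}+q^{-1}=1$ for parts~(3),(4), and the requisite weak-$*$ closedness and compactness of the sets above.

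The main obstacle is this last requirement, which is also the source of the asymmetry whereby ``$\beta>\alpha$'' appears in (2)(a) and (4)(a) but not in the normality statements or in parts~(1),(3). Passing between a normality inclusion and the dual conormality inclusion forces one to replace a set by its bipolar, hence by a closure; in parts~(1),(3) the set one must recover, $(\bottompolar{B_{2}}\cap\bottompolar C)+\bottompolar D$, lives in the dual space $Z$ and is the sum of a weak-$*$ compact set and a weak-$*$ closed subspace, hence already weak-$*$ closed, so no loss occurs; in parts~(2),(4) the analogous set $(B_{2}\cap C)+D$ lives in the non-reflexive primal space $Y$ and need not be closed, so the General Duality Theorem recovers the conormality inclusion only ``up to $\varepsilon$'', i.e.\ with each $\beta>\alpha$ in place of $\alpha$. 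Making this bookkeeping precise --- in particular verifying that for the dual pair $(\conv\omegadomain X,\l 1\omegadomain{X'})$ the relevant bounded subsets of $\l 1\omegadomain{X'}$ are genuinely weak-$*$ compact, which is exactly why $\conv\omegadomain X$ rather than $\l\infty\omegadomain X$ is the correct partner --- is the step that requires care; the remainder is the routine polar calculus and unwinding sketched above.
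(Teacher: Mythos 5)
Your proposal follows essentially the same route as the paper: the corollary is the paper's reformulation of Theorem~\ref{thm:general-banach-space-normality-duality}, which is proved exactly as you describe --- by applying Theorem~\ref{thm:Normality-Duality} to the dual pairs $(\conv\omegadomain X,\l 1\omegadomain{X'})$, $(\l 1\omegadomain X,\l{\infty}\omegadomain{X'})$ and $(\l p\omegadomain X,\l q\omegadomain{X'})$ with the sets $\directsum C$, $\allconstants$, $\oneconstants$, $\zerosummables$, $\onesummables$ and the unit balls, whose one-sided polars are computed in Lemma~\ref{lem:Polars-of-sums-and-constants}, and with the weak-$*$ compactness of $\alpha\closedball{\l 1\omegadomain{X'}}$ supplying the closedness needed in parts (1) and (3). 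The one place your sketch is thinner than the argument actually required is the ``$\beta>\alpha$'' direction in parts (2) and (4): passing from the inclusion $\onesummables\subseteq\closure{\parenth{\alpha\closedball{}\cap\directsum C+\zerosummables}}$ to the inclusion into every dilation by $\lambda>1$ is not a formal consequence of taking closures --- it is Theorem~\ref{thm:Normality-Duality}(2)(b), whose hypotheses must be checked, namely that $\alpha\closedball{\l 1\omegadomain X}\cap\directsum C+\zerosummables$ is cs-closed (Lemma~\ref{lem:basic-cs-results}, using completeness of the $\l 1$-sum), that $\onesummables$ contains the pre-cs-compact set $\closedball{\l 1\omegadomain X}$, and that the required approximation property holds because the weak and norm closures of this convex set coincide (Mazur). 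Supplying these verifications, as the paper does, closes the only real gap in your outline.
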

{}\setcounter{thm}{8}
\begin{cor}
Let $X$ be a real Banach space and $\{C_{\omega}\}_{\omega\in\Omega}$
a collection of closed cones in $X$ and $\alpha\geq1$.
\begin{enumerate}
\item The following are equivalent:

\begin{enumerate}
\item If $\xi\in\directsum_{\omega\in\Omega}C_{\omega}\subseteq\l 1\omegadomain X$,
then $\norm{\xi}_{1}\leq\alpha\norm{\sum_{\omega\in\Omega}\xi_{\omega}}$.
\item For every $\eta\in\l{\infty}\omegadomain{X'}$, there exists some
$\phi\in\bigcap_{\omega\in\Omega}\parenth{\eta_{\omega}-\bottompolar{C_{\omega}}}$
with $\norm{\phi}\leq\alpha\norm{\eta}_{\infty}$.
\end{enumerate}
\IfStrEqCase{\docclass}{{amsart}{}{elsart}{ \pagebreak }}
\item The following are equivalent:

\begin{enumerate}
\item For every $\xi\in\conv\omegadomain X$ and $\beta>\alpha$, there
exists some $x\in\bigcap_{\omega\in\Omega}\parenth{\xi_{\omega}-C_{\omega}}$
with $\norm x\leq\beta\norm{\xi}_{\infty}$.
\item If $\eta\in\directsum_{\omega\in\Omega}\bottompolar{C_{\omega}}\subseteq\l 1\omegadomain{X'}$,
then $\norm{\eta}_{1}\leq\alpha\norm{\sum_{\omega\in\Omega}\eta_{\omega}}$.
\end{enumerate}
\end{enumerate}

\noindent If, in addition, $\abs{\Omega}<\infty$ and $1\leq p,q\leq\infty$,
with $p^{-1}+q^{-1}=1$, then:
\begin{enumerate}
\item [\listitemhack{3}]The following are equivalent:

\begin{enumerate}
\item If $\xi\in\directsum_{\omega\in\Omega}C_{\omega}\subseteq\l p\omegadomain X$,
then $\norm{\xi}_{p}\leq\alpha\norm{\sum_{\omega\in\Omega}\xi_{\omega}}$.
\item For every $\eta\in\l q\omegadomain{X'}$, there exists some $\phi\in\bigcap_{\omega\in\Omega}\parenth{\eta_{\omega}-\bottompolar{C_{\omega}}}$
with $\norm{\phi}\leq\alpha\norm{\eta}_{q}$.
\end{enumerate}
\item [\listitemhack{4}]The following are equivalent:

\begin{enumerate}
\item For every $\xi\in\l p\omegadomain X$ and $\beta>\alpha$, there exists
some $x\in\bigcap_{\omega\in\Omega}\parenth{\xi_{\omega}-C_{\omega}}$
with $\norm x\leq\beta\norm{\xi}_{p}$.
\item If $\eta\in\directsum_{\omega\in\Omega}\bottompolar{C_{\omega}}\subseteq\l q\omegadomain{X'}$,
then $\norm{\eta}_{q}\leq\alpha\norm{\sum_{\omega\in\Omega}\eta_{\omega}}$.
\end{enumerate}
\end{enumerate}
\end{cor}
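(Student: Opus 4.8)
The plan is to deduce all four equivalences from Theorem~\ref{thm:general-banach-space-additivity-duality}, which is itself the application of the abstract General Additivity Duality Theorem (Theorem~\ref{thm:Additivity-Duality}) to a short list of dual pairs, so I will describe the argument at that level. For items~(1) and~(2) I would take, under the canonical bilinear pairing $(\xi,\eta)\mapsto\sum_{\omega\in\Omega}\eta_{\omega}(\xi_{\omega})$, the dual pairs $\parenth{\l1\omegadomain X,\l\infty\omegadomain{X'}}$ and $\parenth{\l\infty\omegadomain X,\l1\omegadomain{X'}}$ respectively; in each case the two spaces separate each other's points for \emph{every} index set $\Omega$, one being a Banach space and the other its dual or its canonical predual, and this is exactly why items~(1)--(2) carry no restriction on $\Omega$. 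For items~(3) and~(4) I would take $\parenth{\l p\omegadomain X,\l q\omegadomain{X'}}$, where finiteness of $\Omega$ enters only to supply the isometric identification $\l q\omegadomain{X'}=\parenth{\l p\omegadomain X}'$.

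In each instance I would feed Theorem~\ref{thm:Additivity-Duality} the following data: $C$ is the direct sum $\directsum_{\omega\in\Omega}C_{\omega}$ inside the relevant $\ell^{p}$- or $\ell^{\infty}$-sum; $D$ is $\curly0$ on the ``additive side'' of the equivalence and a subspace ($\conv\omegadomain X$, all of $\l p\omegadomain X$, or the whole dual space) on the ``coadditive side''; $B_{1}$ is a scaled norm ball $\alpha\closedball{}$; and $B_{2}$ is either the preimage $S^{-1}\parenth{\closedball X}$ of the unit ball of $X$ under the summation map $S\colon\xi\mapsto\sum_{\omega\in\Omega}\xi_{\omega}$, or the image $T\parenth{\alpha\closedball X}$ of the $\alpha$-ball of $X$ under the constant embedding $T\colon x\mapsto(x)_{\omega\in\Omega}$. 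With these choices the additivity inclusion $(B_{2}\cap C)+D\subseteq B_{1}$ unwinds, coordinatewise and after rescaling, to the statement of the form ``$\xi\in\directsum_{\omega}C_{\omega}\Rightarrow\norm{\xi}\le\alpha\norm{\sum_{\omega}\xi_{\omega}}$'' occurring in the item, while the coadditivity inclusion $B_{1}\subseteq(B_{2}+C)\cap D$ unwinds to the accompanying directedness statement ``there exists $x\in\bigcap_{\omega}\parenth{\xi_{\omega}-C_{\omega}}$ with controlled norm'' (or its $X'$-valued analogue).

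The substance of the argument is the one-sided polar calculus linking the two halves of each equivalence, for which Lemma~\ref{lem:elementary-normality-polars} is the right tool. The computations I would carry out are: $\bottompolar{\parenth{\directsum_{\omega}C_{\omega}}}=\directsum_{\omega}\bottompolar{C_{\omega}}$ in the dual $\ell^{q}$- or $\ell^{\infty}$-sum, by testing against elements supported at a single index; $\bottompolar{\parenth{\alpha\closedball{}}}=\alpha^{-1}\closedball{}$ in the dual; $\bottompolar{\parenth{S^{-1}\parenth{\closedball X}}}=T\parenth{\closedball{X'}}$ and, dually, $\bottompolar{\parenth{T\parenth{\alpha\closedball X}}}=\set{\eta}{\norm{\sum_{\omega\in\Omega}\eta_{\omega}}\le\alpha^{-1}}$, both using that $S$ is a metric surjection onto $X$; and that $\bottompolar{\curly0}$ is the whole dual space whereas $\bottompolar{\parenth{\conv\omegadomain X}}=\curly0$. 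Substituting these polars into the coadditivity inclusion on the opposite side of each dual pair reproduces the companion statement of the corollary verbatim.

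The hard part is not any single estimate but getting the constants right, which is where the hypotheses of Theorem~\ref{thm:Additivity-Duality} must be checked carefully. The additivity inclusions $(B_{2}\cap C)+D\subseteq B_{1}$ are intersections of closed sets and dualize with the exact constant $\alpha$; this covers the four statements of the form ``$\Rightarrow\norm{\cdot}\le\alpha\norm{\cdot}$''. The coadditivity inclusions $B_{1}\subseteq(B_{2}+C)\cap D$, however, require that $B_{2}+C$ (a norm ball plus a closed cone) be closed in the relevant weak topology. On the dual side of a pair this set is $T\parenth{\closedball{X'}}+\bottompolar{\parenth{\directsum_{\omega}C_{\omega}}}$, a sum of a weak-$*$ compact set (by Banach--Alaoglu) and a weak-$*$ closed cone, hence weak-$*$ closed, and the exact constant survives --- this is why (1)(b) and (3)(b) carry no $\beta$. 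On the primal side $B_{2}+C$ need not be closed when $X$ is infinite-dimensional, so Theorem~\ref{thm:Additivity-Duality} applies only after a weak closure, equivalently after dilating $B_{2}$ by a factor $\beta/\alpha$; this is precisely what the qualifier ``for every $\beta>\alpha$'' in (2)(a) and (4)(a) records. The remaining work is bookkeeping: checking the standing hypotheses of Theorem~\ref{thm:Additivity-Duality} against the concrete data (norm-closedness of each $C_{\omega}$ and hence of $\directsum_{\omega}C_{\omega}$; $D$ a subspace; $B_{1},B_{2}$ absolutely convex with the requisite closedness and absorbing properties; $S$ a metric surjection), keeping the four sets and their polars straight across the four items and the two sides of each, and confirming that the computations for items~(3) and~(4) go through verbatim for every conjugate pair $p,q$ once $\abs\Omega<\infty$.
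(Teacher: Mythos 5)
Your overall route is the paper's: choose the dual pairs, take $C=\directsum C$, $D$ either $\{0\}$ or the whole space, $B_{1}$ a scaled ball, $B_{2}$ either $\onesummables$ or a multiple of $\oneconstants$, compute the one-sided polars (your computations agree with Lemma \ref{lem:Polars-of-sums-and-constants}), and feed the data to Theorem \ref{thm:Additivity-Duality}; your explanation of why (1)(b) and (3)(b) dualize with the exact constant (the relevant $B_{2}+C$ on the dual side is a weak-$*$ compact set plus a weak-$*$ closed cone) is also the paper's. Two points need repair. The first is a slip: for item (2) the primal space must be $\conv\omegadomain X$, as the statement itself dictates, not $\l{\infty}\omegadomain X$. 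The pair $\parenth{\l{\infty}\omegadomain X,\l 1\omegadomain{X'}}$ is a legitimate dual pair, but $\l 1\omegadomain{X'}$ is the Banach dual of $\conv\omegadomain X$ and not of $\l{\infty}\omegadomain X$, and this is not cosmetic --- see below.

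The second point is a genuine gap: your justification of the ``for every $\beta>\alpha$'' direction, namely that Theorem \ref{thm:Additivity-Duality} ``applies only after a weak closure, equivalently after dilating $B_{2}$ by a factor $\beta/\alpha$,'' asserts an equivalence that is false in general. Passing from $B_{1}\subseteq\closure{\parenth{B_{2}+C}}$ to $B_{1}\subseteq\lambda\parenth{B_{2}+C}$ for all $\lambda>1$ is exactly the content of the convex-series lemma, Lemma \ref{lem:basic-cs-results}(6), as packaged in Theorem \ref{thm:Additivity-Duality}(2)(b), and its hypotheses are precisely the ones your closing checklist omits. Concretely, one must verify that $\alpha\oneconstants+\directsum C$ is cs-closed (Lemma \ref{lem:basic-cs-results}(4): $\oneconstants$ is cs-compact and $\directsum C$ is norm-closed convex, hence cs-closed); that $B_{1}=\closedball{\conv\omegadomain X}$ is cs-compact (the closed unit ball of a Banach space, Lemma \ref{lem:basic-cs-results}(5)); and that for every $r>0$ and $b\in B_{1}$ one has $\parenth{b-rB_{1}}\cap\parenth{\alpha\oneconstants+\directsum C}\neq\emptyset$. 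This last density condition is extracted from the weak-closure inclusion only because the $\sigma(Y,Y')$-closure and the norm closure of a convex set coincide --- which is exactly where you need $Z$ to be the Banach dual of $Y$, hence $Y=\conv\omegadomain X$ rather than $\l{\infty}\omegadomain X$ in item (2), and $\abs{\Omega}<\infty$ in item (4). With these verifications inserted, your argument closes and coincides with the paper's proof of Theorem \ref{thm:general-banach-space-additivity-duality}, of which the corollary is a direct unwinding.
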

\setcounter{section}{1}
\setcounter{thm}{4}

\bigskip

We note that there are situations where spaces endowed with multiple
cones sometimes arise quite naturally. A very simple example is that
of a C{*}-algebra $A$, when viewed as a real Banach space, which
is generated by the four cones $\{A_{+},-A_{+},iA_{+},-iA_{+}\}$,
where $A_{+}$ is the usual cone of positive elements of $A$. In
Section~\ref{sec:cstar} we include a brief application of our results
from the preceding sections to analyze the geometric properties of
naturally occurring cones in C{*}-algebras, culminating in Theorem
\ref{thm:cstar-order-results} below. 

\setcounter{section}{5}
\setcounter{thm}{2}
\begin{thm}
	Let $A$ be a C{*}-algebra and let $A'$ be its dual. Let $A_{+}$
    denote the cone of positive elements in $A$, and let $A_{+}'$ denote
    the cone of positive functionals on $A$, defined in the usual way
    for C{*}-algebras. Let $\selfadjoint A$ and $\selfadjoint A'$ respectively
    denote the real subspaces of self-adjoint elements in $A$ and $A'$.
    Then:
    \begin{enumerate}
    \item For $b_{1},b_{2},b_{3},b_{4}\in A$, if
    \begin{eqnarray*}
     &  & a\in\parenth{b_{1}+A_{+}+i\selfadjoint A}\cap\parenth{b_{2}-A_{+}+i\selfadjoint A}\\
     &  & \qquad\qquad\cap\parenth{b_{3}+iA_{+}+\selfadjoint A}\cap\parenth{b_{4}-iA_{+}+\selfadjoint A}
    \end{eqnarray*}
    then $\norm a\leq\max\curly{\norm{b_{1}},\norm{b_{2}}}+\max\curly{\norm{b_{3}},\norm{b_{4}}}$.
    \item For $b_{1},b_{2},b_{3},b_{4}\in A$, if
    \begin{eqnarray*}
     &  & a\in\parenth{b_{1}+A_{+}}\cap\parenth{b_{2}-A_{+}}\cap\parenth{b_{3}+iA_{+}}\cap\parenth{b_{4}-iA_{+}},
    \end{eqnarray*}
    then $\norm a\leq\max\curly{\norm{b_{1}},\norm{b_{2}}}+\max\curly{\norm{b_{3}},\norm{b_{4}}}$.
    \item For $a,b,c\in A$, if $a\leq b\leq c$, then $\norm b\leq2\max\curly{\norm a,\norm c}$.
    \item For $a,b,c\in\selfadjoint A$, if $a\leq b\leq c$, then $\norm b\leq\max\curly{\norm a,\norm c}$.
    \item For $\phi_{1},\phi_{2},\phi_{3},\phi_{4}\in A'$, if
    \begin{eqnarray*}
     &  & \varphi\in\parenth{\phi_{1}+A_{+}'+i\selfadjoint A'}\cap\parenth{\phi_{2}-A_{+}'+i\selfadjoint A'}\\
     &  & \qquad\qquad\cap\parenth{\phi_{3}+iA_{+}'+\selfadjoint A'}\cap\parenth{\phi_{4}-iA_{+}'+\selfadjoint A'}
    \end{eqnarray*}
    then $\norm{\varphi}\leq\sum_{j=1}^{4}\norm{\phi_{j}}$.
    \item For $\phi_{1},\phi_{2},\phi_{3},\phi_{4}\in A'$, if
    \begin{eqnarray*}
     &  & \varphi\in\parenth{\phi_{1}+A_{+}'}\cap\parenth{\phi_{2}-A_{+}'}\cap\parenth{\phi_{3}+iA_{+}'}\cap\parenth{\phi_{4}-iA_{+}'},
    \end{eqnarray*}
    then $\norm{\varphi}\leq\sum_{j=1}^{4}\norm{\phi_{j}}$.
    \item For $\rho,\phi,\psi\in A'$, if $\rho\leq\phi\leq\psi$, then
    $\norm{\phi}\leq2\parenth{\norm{\rho}+\norm{\psi}}$.
    \item For $\rho,\phi,\psi\in\selfadjoint A'$, if $\rho\leq\phi\leq\psi$,
    then $\norm{\phi}\leq\norm{\rho}+\norm{\psi}$.
    \end{enumerate}
\end{thm}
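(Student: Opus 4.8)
The plan is to reduce all eight assertions to the two facts (4) and (8) themselves --- statements about the real pre-ordered Banach space $(\selfadjoint A,A_{+})$ and its dual --- and then to propagate these through the real-linear direct-sum decompositions $A=\selfadjoint A\oplus i\selfadjoint A$ and $A'=\selfadjoint A'\oplus i\selfadjoint A'$. Throughout I write $\Re x:=\tfrac12(x+x^{*})$, $\Im x:=\tfrac1{2i}(x-x^{*})$ for the self-adjoint real and imaginary parts of $x\in A$, and $\Re\phi:=\tfrac12(\phi+\phi^{*})$, $\Im\phi:=\tfrac1{2i}(\phi-\phi^{*})$ for $\phi\in A'$, where $\phi^{*}(a):=\overline{\phi(a^{*})}$; each of these maps is contractive, so $\max\curly{\norm{\Re x},\norm{\Im x}}\leq\norm x\leq\norm{\Re x}+\norm{\Im x}$ and likewise for functionals.

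First I would settle (4) and (8). For (4): if $a\leq b\leq c$ in $\selfadjoint A$ and $M:=\max\curly{\norm a,\norm c}$, then in the unitization $-M\cdot1\leq a\leq b\leq c\leq M\cdot1$, so $\sigma(b)\subseteq[-M,M]$ and hence $\norm b\leq M$. For (8): given $\rho\leq\phi\leq\psi$ in $\selfadjoint A'$, note $\norm\phi=\sup\set{\phi(b)}{b\in\selfadjoint A,\ \norm b\leq1}$ for self-adjoint $\phi$, and for each such $b$ the continuous functional calculus gives $b=b_{+}-b_{-}$ with $b_{\pm}\in A_{+}$ and $\norm{b_{\pm}}\leq\norm b\leq1$, whence $\phi(b)=\phi(b_{+})-\phi(b_{-})\leq\psi(b_{+})-\rho(b_{-})\leq\norm\psi+\norm\rho$; taking the supremum yields (8). (Both (4) and (8) can instead be extracted from Corollary \ref{cor:general-banach-space-normality-duality-interpreted} applied to $\selfadjoint A$ with the two cones $A_{+}$ and $-A_{+}$, using the identification $\selfadjoint A'\cong(\selfadjoint A)'$ and the Jordan decomposition of self-adjoint functionals; but that route produces only a bound of the form $2\max\curly{\norm\rho,\norm\psi}$ in (8), so the sharp constant $\norm\rho+\norm\psi$ is cleanest via the functional calculus.)

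Next I would treat the remaining assertions by decomposing every hypothesis into real and imaginary parts. The key point is that membership in a translate of one of the cones turns, after applying $\Re$ and $\Im$, into a conjunction of order relations and equalities in $\selfadjoint A$ (resp. $\selfadjoint A'$): for example $a\in b_{1}+A_{+}+i\selfadjoint A$ says exactly $\Re b_{1}\leq\Re a$, while $a\in b_{3}+iA_{+}$ says $\Re a=\Re b_{3}$ together with $\Im b_{3}\leq\Im a$, and similarly for the others. For (1) this produces $\Re b_{1}\leq\Re a\leq\Re b_{2}$ and $\Im b_{3}\leq\Im a\leq\Im b_{4}$, so (4) together with contractivity of $\Re,\Im$ gives $\norm{\Re a}\leq\max\curly{\norm{b_{1}},\norm{b_{2}}}$ and $\norm{\Im a}\leq\max\curly{\norm{b_{3}},\norm{b_{4}}}$, and then $\norm a\leq\norm{\Re a}+\norm{\Im a}$ is the claimed bound. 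Assertion (2) is obtained the same way (several of the cone conditions there force equalities rather than inequalities, which only helps), and (3) follows because $a\leq b\leq c$ in $A$ forces $\Im a=\Im b=\Im c$ and $\Re a\leq\Re b\leq\Re c$, so $\norm b\leq\norm{\Re b}+\norm{\Im b}\leq\max\curly{\norm a,\norm c}+\max\curly{\norm a,\norm c}=2\max\curly{\norm a,\norm c}$. The dual assertions (5), (6), (7) are established exactly as (1), (2), (3) respectively, now decomposing functionals --- using that $\zeta\in A_{+}'+i\selfadjoint A'$ precisely when $\Re\zeta$ is a positive functional, and $\zeta\in iA_{+}'+\selfadjoint A'$ precisely when $\Im\zeta$ is a positive functional --- and invoking (8) in place of (4); for (7) this gives $\norm\phi\leq\norm{\Re\phi}+\norm{\Im\phi}\leq\parenth{\norm\rho+\norm\psi}+\norm\rho\leq2\parenth{\norm\rho+\norm\psi}$.

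I expect the one genuine obstacle to be pinning down the stated constants rather than merely $\l\infty$-type ones: the General Duality Theorems (Theorems \ref{thm:Normality-Duality} and \ref{thm:Additivity-Duality}), applied with the naive $\l\infty$- and $\l1$-direct sums, deliver bounds of the shape $2\max_{j}\norm{\phi_{j}}$ or $2\max\curly{\norm\rho,\norm\psi}$ where (5), (6) and (8) require the finer $\sum_{j}\norm{\phi_{j}}$ and $\norm\rho+\norm\psi$; so one must either argue (8) directly via the functional calculus, as above, or invoke the General Duality Theorems with a tailored choice of the convex sets $B_{1},B_{2}$ recording an $\l1(\l\infty)$-type norm. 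Everything else is routine: keeping track of which of $\Re,\Im$ each of the eight cone conditions forces to an equality, noting that the implications are vacuous unless the relevant compatibility conditions on the $b_{j}$ (resp. $\phi_{j}$) hold, and passing to the unitization whenever the unit $1$ is invoked so as to cover the non-unital case.
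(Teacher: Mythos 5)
Your proposal is correct, but it runs in the opposite logical direction to the paper and uses none of its machinery. The paper's proof establishes (1) directly from the General Duality Theorem: it builds the dual pair $Y=(A\oplus_{\infty}A)\oplus_{1}(A\oplus_{\infty}A)$, $Z=(A'\oplus_{1}A')\oplus_{\infty}(A'\oplus_{1}A')$, verifies via Grothendieck's decomposition theorem (Theorem \ref{thm:Grothendieck-Jordan}) that $Z$ is conormal for the relevant sets, and transfers this to normality of $Y$ using weak-$*$ compactness of $\closedball Z$ and Theorem \ref{thm:Normality-Duality}(1); it obtains (5) from Lemma \ref{lem:cstar-cone-polars} and Corollary \ref{cor:general-banach-space-normality-duality-interpreted}(4) applied to the four cones $\pm A_{+},\pm iA_{+}$; and it then deduces (2)--(4) and (6)--(8) as specializations of (1) and (5). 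You instead prove the endpoints (4) and (8) by bare hands --- the spectral estimate $-M\cdot1\leq b\leq M\cdot1$ in the unitization, and the Jordan decomposition $b=b_{+}-b_{-}$ of self-adjoint contractions together with the fact that a hermitian functional attains its norm on $\selfadjoint A$ --- and then recover (1)--(3) and (5)--(7) by splitting into real and imaginary parts, using that each of the eight cone conditions is exactly an order relation (or equality) between $\Realpart$- or $\Im$-parts and that $\max\curly{\norm{\Realpart x},\norm{\Im x}}\leq\norm x\leq\norm{\Realpart x}+\norm{\Im x}$. I checked the bookkeeping: the constants come out exactly as stated in all eight items, so the argument is complete. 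What each route buys: yours is shorter, self-contained, and avoids Grothendieck's theorem entirely (only the element-level decomposition is needed), and your closing worry about $\l 1(\l\infty)$-type constants is precisely the issue the paper's bespoke $(\oplus_{\infty})\oplus_{1}$ construction is designed to handle; the paper's route, while heavier, is deliberately chosen to exhibit the section as an application of the duality framework, and it makes visible that (1) is genuinely the dual statement of a decomposition property of $A'$ rather than a consequence of the order structure of $\selfadjoint A$ alone.
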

\setcounter{section}{1}
\setcounter{thm}{7}

\bigskip

We will now describe the structure of this paper:

In Section \ref{sec:Preliminaries} we give some preliminary notation
and results. We begin, in Section~\ref{sub:polar-calculus}, with
some elementary results on the polar calculus for dual pairs of real
vector spaces. Specifically, a result we will use numerous times is
Lemma \ref{lem:elementary-polar-results}(8) and (9), giving the exact
forms of the one-sided polar of the intersection (sum) of a (closed)
convex set containing zero and a cone. Further, in Section \ref{sub:convex-series},
we give basic definitions and a few basic results on convex analysis,
focusing on convex series. These results will be needed in the proofs
of the General Duality Theorems (Theorems \ref{thm:Normality-Duality}
and \ref{thm:Additivity-Duality}).

In Section \ref{sec:General-Geometric-duality-theory} we give general
definitions of normality, additivity, conormality and coadditivity
as the four set-inclusions above, and prove our two main results:
The General Duality Theorems (Theorems \ref{thm:Normality-Duality}
and \ref{thm:Additivity-Duality}).

Section \ref{sec:application-geometric-duality-in-Banach-spaces}
serves as an application of the section preceding it. By using the
General Duality Theorems (Theorems \ref{thm:Normality-Duality} and
\ref{thm:Additivity-Duality}), Corollaries \ref{cor:general-banach-space-normality-duality-interpreted}
and \ref{cor:general-banach-space-additivity-duality-interpreted}
are established. As already mentioned, results from this section follow
as a matter of routine verifications: For a real Banach space $X$
and arbitrary collection of closed cones $\curly{C_{\omega}}_{\omega\in\Omega}$,
we choose a related dual pair of real vector spaces $(Y,Z)$ and sets
$(C,D,B_{1},B_{2})$ in $Y$, compute their polars $(\bottompolar C,\bottompolar D,\bottompolar{B_{1}},\bottompolar{B_{2}})$
in $Z$, and verify the hypotheses of the General Duality Theorems
(Theorems \ref{thm:Normality-Duality} and \ref{thm:Additivity-Duality})
to obtain Theorems \ref{thm:general-banach-space-normality-duality}
and \ref{thm:general-banach-space-additivity-duality}. These theorems
are reformulated into Corollaries \ref{cor:general-banach-space-normality-duality-interpreted}
and \ref{cor:general-banach-space-additivity-duality-interpreted}
which generalize the classical results Theorems \ref{thm:classical-normality-duality}
and \ref{thm:classical-additivity-duality}.

Finally, in Section \ref{sec:cstar} we give a brief application of
our results from the preceding sections to C{*}-algebras. We prove
Theorem \ref{thm:cstar-order-results} on the geometric structure
of naturally occurring cones in C{*}-algebras and their duals.

\section{\label{sec:Preliminaries}Preliminary Definitions, Notation and Results}

All vector spaces in the rest of this paper are assumed to be over
$\R$. All topological vector spaces are assumed to be Hausdorff. 
Let $V$ be a topological vector space with topology $\tau$. Let
$A\subseteq V$. We denote the closure of $A$ by $\closure A$ (or
$\closure[\tau]A$ if confusion could arise). The (closed) convex
hull of $A$ will denoted by $\convexhull A$ ($\closedconvexhull A$).
The topological dual of $V$ will be denoted by $V'$, or by $(V,\tau)'$
if confusion could arise. A non-empty subset $C\subseteq V$ will
be called a \emph{cone} if $C+C\subseteq C$ and $\lambda C\subseteq C$
for all $\lambda\geq0$. If $C\subseteq V$ is a cone, we define its
\emph{dual cone by $C':=\set{\phi\in V'}{\phi(C)\subseteq\R_{\geq0}}$}, where we denote the non-negative real numbers by $\R_{\geq0}:=\{\lambda\in\R\mid\lambda\geq0\}$.

\subsection{\label{sub:polar-calculus}The polar calculus}

The current section will give some notation and basic results regarding
the one-sided polar calculus. Lemma \ref{lem:elementary-polar-results}(8)
and (9) on the one-sided polars of sums (intersections) of (closed)
convex sets and cones will be used numerous times in subsequent sections.

Let $Y$ and $Z$ be real vector spaces and $\duality{\cdot}{\cdot}:Y\times Z\to\R$
a bilinear map such that $\set{\duality{\cdot}z}{z\in Z}$ and $\set{\duality y{\cdot}}{y\in Y}$
separate the points of $Y$ and $Z$ respectively. We will then call
$(Y,Z)$ a \emph{dual pair}, and the map $\duality{\cdot}{\cdot}:Y\times Z\to\R$
a \emph{duality}. Unless otherwise mentioned, $Y$ and $Z$ will be
assumed to be respectively endowed with the weak topology (denoted
$\sigma(Y,Z)$), and the weak{*} topology (denoted $\sigma(Z,Y)$).
It is a well-known fact that $Y'=Z$ and $Z'=Y$ (cf. \cite[Theorem 5.93]{AliprantisBorder}). 

Let $(Y,Z)$ be a dual pair, $A\subseteq Y$ and $B\subseteq Z$.
We define the \emph{one-sided polar }of $A$ and $B$ respectively
by $\bottompolar A:=\set{z\in Z}{\duality az\leq1,\ \forall a\in A}$
and $\bottompolar B:=\set{y\in Y}{\duality yb\leq1,\ \forall b\in B}$. 

We state the following elementary properties of one-sided polars:
\begin{lem}
\label{lem:elementary-polar-results}Let $(Y,Z)$ be a dual pair,
$A$ a non-empty subset of $Y$, $C\subseteq Y$ a cone, and $\{A_{i}\}_{i\in I}$
a collection of non-empty subsets of $Y$. Then,
\begin{enumerate}
\item The set $\bottompolar A$ is closed, convex and contains zero.
\item $A\subseteq B$ implies $\bottompolar A\supseteq\bottompolar B$.
\item For every $\lambda>0$, $\bottompolar{(\lambda A)}=\lambda^{-1}(\bottompolar A)$.
\item $\bottompolar{\parenth{\bigcup_{i\in I}A_{i}}}=\bigcap_{i\in I}\bottompolar{A_{i}}$.
\item $\bibottompolar A=\closedconvexhull{(A\cup\{0\})}$.
\item If, for every $i\in I$, $A_{i}$ is closed convex and contains zero,
then 
\[
\bottompolar{\parenth{\bigcap_{i\in I}A_{i}}}=\closedconvexhull{\parenth{\bigcup_{i\in I}\bottompolar{A_{i}}}}.
\]

\item $\bottompolar C\subseteq Z$ is a closed cone and $\bottompolar C=-C'$.
\item If $A$ is closed convex and contains zero and $C$ is closed, then
$\bottompolar{(A\cap C)}=\closure{(\bottompolar A+\bottompolar C)}$.
\item If $A$ is convex and contains zero, then $\bottompolar{(A+C)}=\bottompolar A\cap\bottompolar C$.
\end{enumerate}
\end{lem}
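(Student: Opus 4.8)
The plan is to dispatch (1)--(4) as direct unwindings of the definition of the one-sided polar, derive (6) from (4) and (5), derive (8) from (6), and prove (5), (7) and (9) directly; the only non-formal input is the Hahn--Banach separation theorem, which enters through the bipolar identity (5). For (1), write $\bottompolar A=\bigcap_{a\in A}\set{z\in Z}{\duality az\le 1}$, a $\sigma(Z,Y)$-closed intersection of closed convex half-spaces each containing $0$; closedness, convexity and containment of $0$ pass to arbitrary intersections. Then (2) holds because enlarging the constraint set shrinks the polar, (3) because $\duality{\lambda a}z=\duality a{\lambda z}$, and (4) because membership in $\bottompolar{\parenth{\bigcup_i A_i}}$ means satisfying the defining inequality for every $a$ in every $A_i$.

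For (7) and (9) the key observation is that $z\in\bottompolar C$ forces $\duality cz\le 0$ for all $c\in C$: since $\mu c\in C$ for every $\mu\ge 0$, one has $\mu\duality cz\le 1$ for all $\mu\ge 0$, hence $\duality cz\le 0$. Thus $\bottompolar C=\set{z\in Z}{\duality cz\le 0\ \forall c\in C}$, which is visibly a cone, is closed by (1), and coincides with $-C'$ by comparison with the definition of the dual cone; this gives (7). For (9), the inclusion $\bottompolar{(A+C)}\subseteq\bottompolar A\cap\bottompolar C$ follows by testing the defining inequality against $a=0$ and against $c=0$ (using $0\in A$ and $0\in C$), and the reverse inclusion follows from $\duality{a+c}z=\duality az+\duality cz\le 1+0$ via the observation above.

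For (5), the inclusion $\closedconvexhull{(A\cup\curly 0)}\subseteq\bibottompolar A$ is immediate: $A\cup\curly 0\subseteq\bibottompolar A$ straight from the definition of the double polar, and $\bibottompolar A$ is closed, convex and contains $0$ by (1). For the converse, given $y_0\notin\closedconvexhull{(A\cup\curly 0)}$, separate the point $y_0$ from this closed convex set in the locally convex space $(Y,\sigma(Y,Z))$ (using $Y'=Z$): this produces $z\in Z$ and $s\ge 0$ (non-negative since $0$ lies in the set) with $\duality yz\le s<\duality{y_0}z$ for all $y$ in the set, and rescaling $z$ --- by $s^{-1}$ if $s>0$, by a large positive scalar if $s=0$ --- yields an element of $\bottompolar A$ at which $y_0$ evaluates to more than $1$, so $y_0\notin\bibottompolar A$. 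Statement (6) then follows by applying $\bottompolar{(\cdot)}$ to the identity $\bottompolar{\parenth{\bigcup_i\bottompolar{A_i}}}=\bigcap_i\bibottompolar{A_i}=\bigcap_i A_i$ (valid by (4) and then (5), each $A_i$ being closed convex and containing $0$) and rewriting the left-hand side with (5), noting that $0\in\bottompolar{A_i}$ for each $i$, so adjoining $0$ to $\bigcup_i\bottompolar{A_i}$ changes nothing.

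Finally, (8) is (6) applied to the two-element family $\curly{A,C}$ --- both closed, convex and containing $0$, a cone being convex and containing $0$ --- which yields $\bottompolar{(A\cap C)}=\closedconvexhull{(\bottompolar A\cup\bottompolar C)}$, combined with the elementary fact that $\closedconvexhull{(U\cup V)}=\closure{(U+V)}$ whenever $U$ is convex with $0\in U$ and $V$ is a cone, applied with $U=\bottompolar A$ and $V=\bottompolar C$ by (1) and (7). That fact I would obtain from the sandwich $\convexhull{(U\cup V)}\subseteq U+V\subseteq\closedconvexhull{(U\cup V)}$ followed by taking closures: the first inclusion by splitting a convex combination according to which of $U,V$ each summand lies in, the $U$-summands recombining to an element of $U$ (their total weight is at most $1$, so convexity and $0\in U$ apply) and the $V$-summands to an element of $V$ (since $V$ is a cone); the second by writing $a+c=\lim_{\lambda\uparrow 1}\parenth{\lambda a+(1-\lambda)\tfrac{c}{1-\lambda}}$, with each term a genuine convex combination of $\lambda a\in U$ and $\tfrac{c}{1-\lambda}\in V$. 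I expect the only real obstacle to be bookkeeping: keeping the two cases $s>0$ and $s=0$ in the separation step honest, and isolating the $\closedconvexhull{(U\cup V)}=\closure{(U+V)}$ lemma with exactly the hypotheses it needs so that it slots into (8) without fuss.
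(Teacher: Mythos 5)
Your proof is correct, and while it covers the same ground as the paper's, it takes a genuinely different route at two points. For (6) the paper runs a fresh Hahn--Banach separation argument (separating a putative $z\in\bottompolar{\parenth{\bigcap_{i}A_{i}}}\setminus\closedconvexhull{\parenth{\bigcup_{i}\bottompolar{A_{i}}}}$ from the hull and normalizing), whereas you obtain (6) purely formally: apply (4) to the family $\curly{\bottompolar{A_{i}}}$ to get $\bottompolar{\parenth{\bigcup_{i}\bottompolar{A_{i}}}}=\bigcap_{i}\bibottompolar{A_{i}}=\bigcap_{i}A_{i}$, then take polars once more and invoke (5). This is cleaner, invokes separation only once (inside the bipolar identity), and incidentally sidesteps a self-referential citation in the paper's own proof of (6), where ``by (6), $y\in\bibottompolar{A_{i}}=A_{i}$'' should read ``by (5)''. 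For (8) the paper proves the inclusion $\bottompolar A+\bottompolar C\subseteq\bottompolar{(A\cap C)}$ by pairing against elements of $A\cap C$ and using (7), while you instead isolate the purely geometric lemma $\closedconvexhull{(U\cup V)}=\closure{(U+V)}$ for $U$ convex containing zero and $V$ a cone, proved by the sandwich $\convexhull{(U\cup V)}\subseteq U+V\subseteq\closedconvexhull{(U\cup V)}$ with the limit $\lambda a+c\to a+c$ as $\lambda\uparrow1$; both are sound, and your version has the small advantage of being a reusable statement independent of the duality. The remaining items \upPar{1}--\upPar{4}, \upPar{7}, \upPar{9} and your direct proof of the bipolar theorem (5) match the standard arguments the paper leaves to the reader or cites; the only phrasing slip is calling $\lambda a+(1-\lambda)\tfrac{c}{1-\lambda}$ a ``convex combination of $\lambda a$ and $\tfrac{c}{1-\lambda}$'' when the weights $\lambda,1-\lambda$ attach to $a$ and $\tfrac{c}{1-\lambda}$, which does not affect the argument.
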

\begin{proof}
Proofs of the statements (1)--(4) are elementary and left as an exercise
for the reader; (5) is The Bipolar Theorem \cite[Theorem~5.103]{AliprantisBorder}.

We prove (6). It is clear that $\closedconvexhull{\parenth{\bigcup_{i\in I}\bottompolar{A_{i}}}}\subseteq\bottompolar{\parenth{\bigcap_{i\in I}A_{i}}}$.
We prove the reverse inclusion. Let $z\in\bottompolar{\parenth{\bigcap_{i\in I}A_{i}}}$,
but suppose that $z\notin\closedconvexhull{\parenth{\bigcup_{i\in I}\bottompolar{A_{i}}}}$.
Then by The Separation Theorem \cite[Corollary~IV.3.10]{Conway},
there exists some $y\in Y$ and $\alpha\in\R$ with $\duality y{\closedconvexhull{\parenth{\bigcup_{i\in I}\bottompolar{A_{i}}}}}<\alpha<\duality yz$.
Since, by (1), $0\in\closedconvexhull{\parenth{\bigcup_{i\in I}\bottompolar{A_{i}}}}$,
we have $\alpha>0$, and hence, by dividing, we may assume $\alpha=1$,
so that $\duality y{\closedconvexhull{\parenth{\bigcup_{i\in I}\bottompolar{A_{i}}}}}<1<\duality yz$.
We conclude that $y\in\bibottompolar{A_{i}}$ for all $i\in I$. Moreover,
each $A_{i}$ is closed, convex and contains zero, so that, by (6),
$y\in\bibottompolar{A_{i}}=A_{i}$ for all $i\in I$, and hence $y\in\bigcap_{i\in I}A_{i}$.
But $\duality yz>1$ contradicts the assumption that $z\in\bottompolar{\parenth{\bigcap_{i\in I}A_{i}}}$.

We prove (7). That $\bottompolar C$ is closed follows from (1). Let
$z\in\bottompolar C$, i.e., $\duality yz\leq1$ for all $y\in C$.
Since $\lambda C\subseteq C$ for all $\lambda\geq0$, we conclude
that we must have $\duality yz\leq0$ for all $y\in C$. It is now
clear that $\bottompolar C+\bottompolar C\subseteq\bottompolar C$,
$\lambda\bottompolar C\subseteq\bottompolar C$ for all $\lambda\geq0$,
and that $\bottompolar C=-C'$.

We prove (8). By (6), $\bottompolar{(A\cap C)}=\closedconvexhull{(\bottompolar A\cup\bottompolar C)}$.
Since both $\bottompolar A$ and $\bottompolar C$ contain zero, we
have $\bottompolar A\cup\bottompolar C\subseteq\bottompolar A+\bottompolar C$.
Furthermore, since $\closure{(\bottompolar A+\bottompolar C)}$ is
closed and convex, it is clear that $\closedconvexhull{(\bottompolar A\cup\bottompolar C)}\subseteq\closure{(\bottompolar A+\bottompolar C)}$.
We prove the reverse inclusion by showing $(\bottompolar A+\bottompolar C)\subseteq\bottompolar{(A\cap C)}=\closedconvexhull{(\bottompolar A\cup\bottompolar C)}$.
Indeed, if $a\in\bottompolar A$ and $c\in\bottompolar C$, then for
every $y\in A\cap C$, by (7), we have $\duality y{a+c}=\duality ya+\duality yc\leq1+0=1$,
so that $a+c\in\bottompolar{(A\cap C)}=\closedconvexhull{(\bottompolar A\cup\bottompolar C)}$.
Therefore $(\bottompolar A+\bottompolar C)\subseteq\closedconvexhull{(\bottompolar A\cup\bottompolar C)}\subseteq\closure{(\bottompolar A+\bottompolar C)}$,
and hence $\closedconvexhull{(\bottompolar A\cup\bottompolar C)}=\closure{(\bottompolar A+\bottompolar C)}$.

We prove (9). Keeping (7) in mind, $\bottompolar{(A+C)}\supseteq\bottompolar A\cap\bottompolar C$
follows. Since both $A$ and $C$ contain zero, we obtain $A\cup C\subseteq A+C$.
Then, by (2) and (4), $\bottompolar{\parenth{A+C}}\subseteq\bottompolar{\parenth{A\cup C}}=\bottompolar A\cap\bottompolar C$.
\end{proof}

\subsection{\label{sub:convex-series}Convex series}

The current subsection gives basic definitions and results concerning
a more general notion of convexity in topological vector spaces; particularly
sets that are well behaved with respect to the taking of convex series
of their elements (in contrast to \emph{finite} convex combinations
where topology does not come into play). 

The somewhat technical result, Lemma \ref{lem:basic-cs-results}(6),
will be an essential ingredient in parts of our General Duality Theorems
(Theorems \ref{thm:Normality-Duality}(2)(b) and \ref{thm:Additivity-Duality}(2)(b)).
The proofs of many classical results for real pre-ordered Banach spaces,
like Theorems~\ref{thm:classical-normality-duality} and \ref{thm:classical-additivity-duality},
rely on similar results (cf. \cite[Corollary 1.3.3]{AsimowEllis}
and \cite[Lemma 1.1.3]{BattyRobinson}). 

Our terminology follows that of Jameson's from \cite{Jameson,JamesonConvexSeries}.
The terminology of using the prefix ``cs'' (for convex series) is
fairly standard (cf. \cite{JamesonConvexSeries,Zalinescu}), although
the term ``$\sigma$-convexity'' also does occur (cf. \cite{BattyRobinson}). 
\begin{defn}
\label{def:sigma-convex}Let $V$ be a topological vector space with
topology $\tau$ and $A\subseteq V$. 
\begin{enumerate}
\item The set $A\subseteq V$ will be called \emph{$\tau$-pre-cs-compact},
if, for all sequences $\{a_{n}\}\subseteq A$ and $\{\lambda_{n}\}\subseteq\R_{\geq0}$
with $\sum_{n=1}^{\infty}\lambda_{n}=1$, the series $\sum_{n=1}^{\infty}\lambda_{n}a_{n}$
converges in the $\tau$-topology.
\item The set $A\subseteq V$ will be called \emph{$\tau$-cs-compact},
if, for all sequences $\{a_{n}\}\subseteq A$ and $\{\lambda_{n}\}\subseteq\R_{\geq0}$
with $\sum_{n=1}^{\infty}\lambda_{n}=1$, the series $\sum_{n=1}^{\infty}\lambda_{n}a_{n}$
converges to a point in $A$ in the $\tau$-topology. 
\item The set $A\subseteq V$ will be called \emph{$\tau$-cs-closed} if,
for sequences $\{a_{n}\}\subseteq A$ and $\{\lambda_{n}\}\subseteq\R_{\geq0}$
with $\sum_{n=1}^{\infty}\lambda_{n}=1$, convergence of the series
$\sum_{n=1}^{\infty}\lambda_{n}a_{n}$ in the $\tau$-topology implies
$\sum_{n=1}^{\infty}\lambda_{n}a_{n}\in A$. 
\end{enumerate}
If no confusion arises as to which topology on $V$ is meant, we will
merely say $A$ is pre-cs-compact, cs-compact or cs-closed\emph{.}
\end{defn}
The following results give basic properties of pre-cs-compact, cs-compact
and cs-closed sets. Most results are elementary and will be left as
exercises (references are however given). The result (5) below is
a slight generalization of \cite[Lemma 1.1.3]{BattyRobinson}.
\begin{lem}
\label{lem:basic-cs-results}Let $V$ be a topological vector space.
\begin{enumerate}
\item In $V$, every cs-compact set is both cs-closed and pre-cs-compact,
and every subset of a pre-cs-compact set is itself pre-cs-compact.
\item In $V$, the intersection of a cs-compact set with a cs-closed set
is again cs-compact.
\item In $V$, every closed convex set is cs-closed and every open convex
set is cs-closed.
\item If $A\subseteq V$ is cs-compact and $B\subseteq V$ is cs-closed,
then $\convexhull{(A\cup B)}$ and $A+B$ are cs-closed.
\item If the topology on $V$ is normable, $V$ is a Banach space if and
only if its closed unit ball is cs-compact.
\item Let $A\subseteq V$ be cs-closed and let $G\subseteq D\subseteq\overline{A}$.
If $G$ is pre-cs-compact such that, for every $r>0$ and $d\in D$,
the set $(d-rG)\cap A$ is non-empty, then $D\subseteq\alpha A$ for
all $\alpha>1$.
\end{enumerate}
\end{lem}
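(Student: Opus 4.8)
The plan is to prove Lemma~\ref{lem:basic-cs-results}(6) by an iterative approximation argument, constructing for each $d \in D$ a convex series of elements of $A$ that converges to a scalar multiple of $d$. Fix $\alpha > 1$ and $d \in D$. Choose a sequence of positive reals $\{r_n\}_{n \ge 1}$ decreasing to $0$ fast enough that $\sum_{n \ge 1} r_n$ is small; more precisely, pick $\lambda \in (0,1)$ with $\lambda^{-1} < \alpha$ (together with a geometric tail bound) and set things up so that the total "error budget" is controlled by $\alpha$. First I would use the hypothesis with $r = r_1$ and the given $d$: since $(d - r_1 G) \cap A \neq \emptyset$, there is $g_1 \in G$ and $a_1 \in A$ with $a_1 = d - r_1 g_1$, i.e. $d = a_1 + r_1 g_1$. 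Now $r_1 g_1$ is a small multiple of an element of $G \subseteq D$, so I would rescale and repeat: apply the hypothesis to the element $g_1 \in D$ with parameter $r_2/r_1$ (adjusting constants) to peel off another $a_2 \in A$ and a fresh $g_2 \in G$, and continue inductively. This produces $d = \sum_{n=1}^{N} (\text{coefficient}_n)\, a_n + (\text{tail}_N)\, g_N$ with the tail coefficient shrinking geometrically.

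The key steps, in order: (i) set up the recursion so that at stage $n$ we have written $d$ as a finite nonnegative combination of $a_1,\dots,a_n \in A$ plus a remainder term $t_n g_n$ with $g_n \in G$ and $t_n \to 0$; (ii) check that the coefficients on the $a_j$'s, after the whole process, sum to a value $s$ with $1 \le s < \alpha$ — this is where the geometric choice of the $r_n$ and $\lambda^{-1} < \alpha$ is used, so that $d/s$ is an honest convex series $\sum \mu_j a_j$ with $\sum \mu_j = 1$ and all $\mu_j \ge 0$; (iii) argue the remainder $t_n g_n \to 0$ in $V$: here I need $G$ to be \emph{pre-cs-compact} (actually just that scalar multiples $t_n g_n$ with $t_n \to 0$ and $g_n$ drawn from a pre-cs-compact set go to zero — which follows since $t_n g_n$ can be absorbed into the convergence of an associated convex series over $G$, or more directly since pre-cs-compactness forces $G$ to be bounded); (iv) conclude $\sum_{j} \mu_j a_j$ converges in $V$ and equals $d/s$; (v) invoke that $A$ is \emph{cs-closed} to deduce $d/s \in A$, hence $d \in sA \subseteq \alpha A$ because $A$, being cs-closed, is in particular star-shaped about its points in the relevant sense — more carefully, $sA \subseteq \alpha A$ needs $1 \le s \le \alpha$ and $0 \in A$ (or convexity of $A$ together with $0$), so I would either note $0 \in \overline{A}$ forces the needed absorption or build the final convex series to land exactly at $d/\alpha'$ for arbitrary $\alpha' \in (1,\alpha)$ and take a limit.

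Let me reconsider step (v), since that is the subtle point: cs-closed sets need not be convex or contain $0$, so "$d/s \in A \Rightarrow d \in \alpha A$" is not automatic. The cleaner route is to arrange the construction so that the convex series representing a rescaling of $d$ has coefficient sum exactly controllable: for any fixed $\beta$ with $1 < \beta < \alpha$, produce $d = \sum_{j=1}^\infty \nu_j a_j$ with $a_j \in A$, $\nu_j \ge 0$, and $\sum_j \nu_j = \beta$; then $d/\beta = \sum_j (\nu_j/\beta) a_j$ is a genuine convex combination, it converges (by pre-cs-compactness of the relevant bounded set and Cauchy-ness of partial sums), so by cs-closedness $d/\beta \in A$, giving $d \in \beta A$. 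Since $\beta < \alpha$ was arbitrary and we want $D \subseteq \alpha A$ for all $\alpha > 1$, this suffices — indeed it gives the slightly stronger $D \subseteq \beta A$ for all $\beta > 1$.

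The main obstacle I anticipate is bookkeeping the coefficients in the recursion so that their total is pinned between $1$ and a prescribed $\beta > 1$ while simultaneously ensuring the leftover term genuinely vanishes; this requires choosing the sequence $\{r_n\}$ (equivalently the ratios at each stage) with some care, balancing "enough room to keep applying the hypothesis" against "total error stays below $\beta - 1$." Making the convergence of $\sum_j (\nu_j/\beta) a_j$ rigorous will use that a pre-cs-compact set is bounded (so partial sums of the tail are Cauchy), which should follow from Definition~\ref{def:sigma-convex}(1) applied to a suitable series, or can be cited; everything else is routine once the recursion is correctly normalized.
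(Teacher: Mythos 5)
Your proposal is correct and follows essentially the same route as the paper's proof: the same iterative peeling recursion producing a geometric expansion $d=\sum_{j}r^{\,j-1}a_{j}+r^{n}g_{n}$ with each remainder lying in $G\subseteq D$ so the hypothesis can be reapplied, the tail killed via pre-cs-compactness of $G$, and cs-closedness of $A$ applied to the normalized convex series. Your resolution of the scaling subtlety in step (v) is also the paper's: one does not pass from $d/s\in A$ to $d\in\alpha A$, but instead notes that the ratio $r$ (equivalently the coefficient sum $(1-r)^{-1}$) can be chosen arbitrarily in $(0,1)$, giving $D\subseteq\beta A$ for every $\beta>1$ directly.
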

\begin{proof}
The assertions (1) and (2) follow immediately from the definitions.

Proof of the assertion (3) can be found in \cite[Proposition~1.2.1.(i)]{Zalinescu}.
The argument for closed convex sets is elementary. The argument for
open convex is slightly more involved and relies on The Separation
Theorem \cite[Theorem~IV.3.7]{Conway}.

An elementary argument will prove (4). Proof can be found in \cite[Theorem~A.2]{Jameson}.

To establish (5), it can be seen that absolutely convergent series
converge if and only if the closed unit ball is cs-compact. 

We prove (6). Let $y\in D$ and $r\in(0,1)$ be arbitrary. We inductively
define sequences $\curly{b_{n}}\subseteq D$ and $\curly{a_{n}}\subseteq A$
as follows: For any $n\in\mathbb{N}$, if $\set{a_{j}}{j=1,\ldots,n-1}\subseteq A$,
we define $b_{n}:=r^{-(n-1)}y-\sum_{j=1}^{n-1}r^{j-n}a_{j}$. If $b_{n}\in D$,
choosing $a_{n}\in\left(b_{n}-rG\right)\cap A\neq\emptyset$ then
yields 
\begin{eqnarray*}
D\supseteq G & \ni & r^{-1}b_{n}-r^{-1}a_{n}\\
 & = & r^{-1}\parenth{r^{-(n-1)}y-\sum_{j=1}^{n-1}r^{j-n}a_{j}}-r^{n-(n+1)}a_{n}\\
 & = & r^{-n}y-\sum_{j=1}^{n-1}r^{j-(n+1)}a_{j}-r^{n-(n+1)}a_{n}\\
 & = & r^{-n}y-\sum_{j=1}^{n}r^{j-(n+1)}a_{j}\\
 & = & b_{n+1}
\end{eqnarray*}
Since $b_{1}=y\in D$, we therefore obtain the sequences $\{b_{n}\}\subseteq D$
and $\{a_{n}\}\subseteq A$. 

Since $b_{n+1}\in G$ for all $n\in\N$, and $G$ is pre-cs-compact,
the series $r^{-1}(r-1)\sum_{n=1}^{\infty}r^{n}b_{n+1}$ converges,
and hence $r^{n}b_{n+1}\to0$. Because $r^{n}b_{n+1}=y-\sum_{j=1}^{n}r^{j-1}a_{j},$
we conclude that the series $\sum_{j=0}^{\infty}r^{j}a_{j+1}$ converges
to $y$. Since $\{a_{n}\}\subseteq A$ and $A$ is cs-closed, $(1-r)\sum_{j=0}^{\infty}r^{j}a_{j+1}$
converges a point in $A$ which must equal $(1-r)y$. We obtain $y\in(1-r)^{-1}A$,
and since $r\in(0,1)$ was chosen arbitrarily, the result follows.\end{proof}
\begin{lem}
\label{lem:intersection-of-dilations}Let $Y$ be a locally convex
space. If $A\subseteq Y$ is a closed convex set containing zero,
then 
\[
\bigcap_{\lambda>1}\lambda A=A.
\]
\end{lem}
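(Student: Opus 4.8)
The inclusion $A \subseteq \bigcap_{\lambda > 1} \lambda A$ is immediate: since $0 \in A$ and $A$ is convex, for any $\lambda > 1$ and $a \in A$ we have $a = \lambda\bigl((1-\lambda^{-1})\cdot 0 + \lambda^{-1} a\bigr) \in \lambda A$. So the entire content is the reverse inclusion $\bigcap_{\lambda > 1}\lambda A \subseteq A$. The plan is to fix $y \in \bigcap_{\lambda > 1}\lambda A$ and show $y \in A$, exploiting that $A$ is closed. For each $\lambda > 1$ there is a point $a_\lambda \in A$ with $y = \lambda a_\lambda$, i.e. $a_\lambda = \lambda^{-1} y$; equivalently, $\lambda^{-1} y \in A$ for every $\lambda > 1$, which says exactly that $t y \in A$ for every $t \in (0,1)$. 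Letting $t \uparrow 1$, the net (or sequence) $t y$ converges to $y$ in the topology of $Y$, and since $A$ is closed we conclude $y \in A$.

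Concretely, I would write: let $y \in \bigcap_{\lambda>1}\lambda A$. For each $n \in \N$ the scalar $1 + \tfrac1n > 1$, so $y \in (1+\tfrac1n)A$, hence $\tfrac{n}{n+1}\,y \in A$. As $n \to \infty$, $\tfrac{n}{n+1}\,y \to y$ (scalar multiplication is continuous in the topological vector space $Y$), and $A$ is closed, so $y \in A$. This gives $\bigcap_{\lambda>1}\lambda A \subseteq A$, and combined with the easy inclusion above, equality follows.

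There is essentially no obstacle here: local convexity is not even needed for this argument — only that $Y$ is a Hausdorff topological vector space, that $A$ is closed, convex, and contains zero. (Local convexity is presumably stated because that is the standing setting in the surrounding section.) The one small point worth stating cleanly is the continuity of the scaling map $\R \times Y \to Y$, which is part of the definition of a topological vector space, so the limit $\tfrac{n}{n+1}y \to y$ is legitimate; everything else is a one-line convexity computation plus the definition of closedness.
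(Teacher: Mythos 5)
Your proof is correct, but it takes a genuinely different and more elementary route than the paper's. For the nontrivial inclusion $\bigcap_{\lambda>1}\lambda A\subseteq A$, you observe that $y\in\lambda A$ for every $\lambda>1$ means precisely that $\lambda^{-1}y\in A$ for every $\lambda>1$, and then let $\lambda\downarrow 1$ and use closedness of $A$ together with continuity of scalar multiplication. The paper instead argues by contradiction via the Hahn--Banach separation theorem: assuming $y\notin A$, it separates $y$ from $A$ by a functional $\phi$ with $\phi(a)<\alpha<\phi(y)$ for all $a\in A$, and then exhibits an explicit $\lambda_{0}>1$ with $y\notin\lambda_{0}A$. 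Your argument is shorter, avoids duality machinery entirely, and --- as you correctly note --- does not use local convexity at all, only that $Y$ is a topological vector space; local convexity is genuinely needed for the paper's separation argument, which is why the lemma is stated in that setting. Both proofs use convexity and $0\in A$ only for the easy inclusion $A\subseteq\bigcap_{\lambda>1}\lambda A$, which you handle the same way the paper does. There is no gap in what you wrote.
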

\begin{proof}
Let $a\in A$. If $a=0$, then clearly $a\in\bigcap_{\lambda>1}\lambda A$.
Suppose that $a\neq0$. Let $\lambda>1$ be arbitrary, so that, by
convexity of $\lambda A$, $a=\lambda^{-1}(\lambda a)+(1-\lambda^{-1})0\in\lambda A$.
We conclude that $A\subseteq\bigcap_{\lambda>1}\lambda A$.

We prove the reverse inequality. Suppose $y\in\bigcap_{\lambda>1}\lambda A$
is not an element of $A$. By The Separation Theorem \cite[Theorem IV.3.9]{Conway},
there exists a functional $\phi\in Y'$ and $\alpha\in\R$ such that
$\phi\parenth a<\alpha<\phi(y)$ for all $a\in A$. Since $0\in A$,
we have that $\alpha>0$. Let $\lambda_{0}:=(2\alpha)^{-1}(\alpha+\phi(y))>1$.
For every $a\in A$, 
\[
\phi(\lambda_{0}a)=\frac{(\alpha+\phi(y))\phi(a)}{2\alpha}<\frac{(\alpha+\phi(y))\alpha}{2\alpha}=\frac{\alpha+\phi(y)}{2}<\phi(y).
\]
Therefore $\phi(y-\lambda_{0}a)>0$ for every $a\in A$. Hence $y\notin\lambda_{0}A$,
contradicting the assumption $y\in\bigcap_{\lambda>1}\lambda A$.
Therefore $\bigcap_{\lambda>1}\lambda A\subseteq A$. We conclude
that $\bigcap_{\lambda>1}\lambda A=A$.
\end{proof}

\section{\label{sec:General-Geometric-duality-theory}Geometric duality theory
for cones in dual pairs of vector spaces:\\A general framework}

In this section we define general notions of normality, additivity,
conormality and coadditivity as interactions of two cones with two
convex sets containing zero. Using our preliminary results from the
previous section we prove our main results: The General Duality Theorems
(Theorems \ref{thm:Normality-Duality} and \ref{thm:Additivity-Duality}). 
\begin{defn}
\label{def:Normality-Conormality-Additivity-Coadditivity}Let $Y$
be a vector space. Let $C,D\subseteq Y$ be cones and $B_{1},B_{2}\subseteq Y$
convex sets containing zero. 
\begin{enumerate}
\item We will say that $Y$ is \emph{normal} with respect to $(C,D,B_{1},B_{2})$
if 
\[
(B_{2}+C)\cap D\subseteq B_{1}.
\]

\item We will say that $Y$ is \emph{additive} with respect to $(C,D,B_{1},B_{2})$
if 
\[
(B_{2}\cap C)+D\subseteq B_{1}.
\]

\item We will say that $Y$ is \emph{conormal} with respect to $(C,D,B_{1},B_{2})$
if 
\[
B_{1}\subseteq(B_{2}\cap C)+D.
\]

\item We will say that $Y$ is \emph{coadditive} with respect to $(C,D,B_{1},B_{2})$
if 
\[
B_{1}\subseteq(B_{2}+C)\cap D.
\]

\end{enumerate}
\end{defn}
\begin{rem}
\label{rem:constant-passing-remark-in-normality-additivity}We note
that, if, for some $\alpha>0$, $Y$ has one of the above properties
with respect to $(C,D,B_{1},\alpha B_{2})$, then it has the same
property with respect to $(C,D,\alpha^{-1}B_{1},B_{2})$.
\end{rem}
Elementary applications of Lemma \ref{lem:elementary-polar-results}
yield the following result. 
\begin{lem}
\label{lem:elementary-normality-polars}Let $(Y,Z)$ be a dual pair
with $\sigma(Y,Z)$-closed cones $C,D\subseteq Y$ and $B_{1},B_{2}\subseteq Y$
$\sigma(Y,Z)$-closed convex sets containing zero.
\begin{enumerate}
\item If $Y$ is normal with respect to $(C,D,B_{1},B_{2})$, then $\bottompolar{B_{1}}\subseteq\closure{(\bottompolar{B_{2}}\cap\bottompolar C)+\bottompolar D}$.
\item If $Y$ is additive with respect to $(C,D,B_{1},B_{2})$, then $\bottompolar{B_{1}}\subseteq\closure{\parenth{\bottompolar{B_{2}}+\bottompolar C}}\cap\bottompolar D$.
\item If $Y$ is conormal with respect to $(C,D,B_{1},B_{2})$, then $\parenth{\bottompolar{B_{2}}+\bottompolar C}\cap\bottompolar D\subseteq\bottompolar{B_{1}}$.
\item If $Y$ is coadditive with respect to $(C,D,B_{1},B_{2})$, then $(\bottompolar{B_{2}}\cap\bottompolar C)+\bottompolar D\subseteq\bottompolar{B_{1}}$
\item If $Z$ is normal with respect to $(\bottompolar C,\bottompolar D,\bottompolar{B_{1}},\bottompolar{B_{2}})$,
then $B_{1}\subseteq\closure{B_{2}\cap C+D}.$
\item If $Z$ is additive with respect to $(\bottompolar C,\bottompolar D,\bottompolar{B_{1}},\bottompolar{B_{2}})$,
then $B_{1}\subseteq\closure{\parenth{B_{2}+C}}\cap D.$
\item If $Z$ is conormal with respect to $(\bottompolar C,\bottompolar D,\bottompolar{B_{1}},\bottompolar{B_{2}})$,
then $\parenth{B_{2}+C}\cap D\subseteq B_{1}.$
\item If $Z$ is coadditive with respect to $(\bottompolar C,\bottompolar D,\bottompolar{B_{1}},\bottompolar{B_{2}})$,
then $(B_{2}\cap C)+D\subseteq B_{1}.$
\end{enumerate}
\end{lem}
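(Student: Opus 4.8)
The plan is to read off all eight inclusions mechanically from Lemma~\ref{lem:elementary-polar-results}. Each of the four properties in Definition~\ref{def:Normality-Conormality-Additivity-Coadditivity} is a single set inclusion between expressions built from $C,D,B_1,B_2$ by one ``$+$'' and one ``$\cap$'', so applying the order-reversing property of one-sided polars (Lemma~\ref{lem:elementary-polar-results}(2)) to that inclusion and then expanding the polar of the composite expression with Lemma~\ref{lem:elementary-polar-results}(8) and (9) produces an inclusion of exactly the claimed shape. I would first prove (1)--(4) directly and then obtain (5)--(8) by applying (1)--(4) to the dual pair $(Z,Y)$ and invoking the Bipolar Theorem.

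For (1): normality of $Y$ means $(B_2+C)\cap D\subseteq B_1$, so Lemma~\ref{lem:elementary-polar-results}(2) gives $\bottompolar{B_1}\subseteq\bottompolar{((B_2+C)\cap D)}$; since $B_2$ is convex and contains zero and $C$ is a cone, Lemma~\ref{lem:elementary-polar-results}(9) yields $\bottompolar{(B_2+C)}=\bottompolar{B_2}\cap\bottompolar C$, and Lemma~\ref{lem:elementary-polar-results}(8), applied to the intersection of $B_2+C$ with the closed cone $D$, turns $\bottompolar{((B_2+C)\cap D)}$ into $\closure{((\bottompolar{B_2}\cap\bottompolar C)+\bottompolar D)}$, which is the asserted right-hand side. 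Part (2) is the mirror image: from $(B_2\cap C)+D\subseteq B_1$ one uses Lemma~\ref{lem:elementary-polar-results}(9) on the outer sum and Lemma~\ref{lem:elementary-polar-results}(8) on the inner intersection $B_2\cap C$ (here $B_2$ is genuinely closed), obtaining $\bottompolar{B_1}\subseteq\closure{(\bottompolar{B_2}+\bottompolar C)}\cap\bottompolar D$. For (3) and (4) the defining inclusion points the other way ($B_1\subseteq\cdots$), so Lemma~\ref{lem:elementary-polar-results}(2) gives $\bottompolar{(\cdots)}\subseteq\bottompolar{B_1}$; computing the left-hand polar by the same two applications of (8) and (9) produces $\closure{(\bottompolar{B_2}+\bottompolar C)}\cap\bottompolar D\subseteq\bottompolar{B_1}$, respectively $\closure{((\bottompolar{B_2}\cap\bottompolar C)+\bottompolar D)}\subseteq\bottompolar{B_1}$, and one simply discards the closure, since any set is contained in its closure. (For (3) and (4) one may equally well use only the closure-free ``easy'' half of Lemma~\ref{lem:elementary-polar-results}(8), exactly as in the proof of that lemma.)

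For (5)--(8): apply (1)--(4) to the dual pair $(Z,Y)$ with the data $(\bottompolar C,\bottompolar D,\bottompolar{B_1},\bottompolar{B_2})$ in place of $(C,D,B_1,B_2)$. This is legitimate because $\bottompolar C,\bottompolar D$ are closed cones by Lemma~\ref{lem:elementary-polar-results}(7) and $\bottompolar{B_1},\bottompolar{B_2}$ are closed convex sets containing zero by Lemma~\ref{lem:elementary-polar-results}(1). The conclusion of, e.g., (1) then reads as an inclusion among the double polars $\bottompolar{(\bottompolar{B_i})}$, $\bottompolar{(\bottompolar C)}$, $\bottompolar{(\bottompolar D)}$, and the Bipolar Theorem (Lemma~\ref{lem:elementary-polar-results}(5)) identifies each of these with $B_i$, $C$, $D$: here one uses that $B_1,B_2$ are $\sigma(Y,Z)$-closed convex sets containing zero, and that $C,D$ are $\sigma(Y,Z)$-closed convex cones, so that $\closedconvexhull{(C\cup\{0\})}=C$ and likewise for $D$. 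In this way ``$Z$ is normal with respect to $(\bottompolar C,\bottompolar D,\bottompolar{B_1},\bottompolar{B_2})$'' becomes ``$B_1\subseteq\closure{((B_2\cap C)+D)}$'', which is (5); statements (6), (7), (8) come from (2), (3), (4) the same way.

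The one point that requires genuine care is the bookkeeping of closures when invoking Lemma~\ref{lem:elementary-polar-results}(8): that result computes $\bottompolar{(A\cap C)}$ under the hypothesis that $A$ is closed, whereas in parts (1) and (5) the set playing the role of $A$ is the sum $B_2+C$ (respectively $\bottompolar{B_2}+\bottompolar C$), which is convex and contains zero but need not be closed a priori. The safe route is to reduce $\bottompolar{(B_2+C)}$ to $\bottompolar{B_2}\cap\bottompolar C$ via the closure-free Lemma~\ref{lem:elementary-polar-results}(9) \emph{before} touching the intersection with the closed cone $D$, and to note that the closures appearing in the conclusions of (1), (2), (5), (6) are precisely the ones introduced by Lemma~\ref{lem:elementary-polar-results}(8), while none survives in (3), (4), (7), (8) because there the outermost polar operation is the closure-free Lemma~\ref{lem:elementary-polar-results}(9) (or the trivial inclusion of a set into its closure). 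Everything else is a direct substitution into Lemma~\ref{lem:elementary-polar-results}.
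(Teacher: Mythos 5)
Your overall strategy---read each inclusion off from Lemma~\ref{lem:elementary-polar-results}(2), expand the composite polar with Lemma~\ref{lem:elementary-polar-results}(8) and (9), and deduce (5)--(8) from (1)--(4) in the dual pair $(Z,Y)$ via the Bipolar Theorem---is exactly what the paper's one-line proof gestures at, and your arguments for (2), (3), (4), (6), (7) and (8) are correct: in each of those cases the intersection whose polar must be expanded by Lemma~\ref{lem:elementary-polar-results}(8) has genuinely closed factors ($B_{2}\cap C$, resp.\ $\bottompolar{B_{2}}\cap\bottompolar{C}$), or else only the closure-free ``easy'' half of (8) is needed, as you note.

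The gap is in (1) and (5). There the polar to be computed is $\bottompolar{\parenth{(B_{2}+C)\cap D}}$ (resp.\ $\bottompolar{\parenth{(\bottompolar{B_{2}}+\bottompolar{C})\cap\bottompolar{D}}}$), and Lemma~\ref{lem:elementary-polar-results}(8) applies only when the convex factor of the intersection is closed; $B_{2}+C$ need not be. You flag this, but your proposed repair---compute $\bottompolar{(B_{2}+C)}=\bottompolar{B_{2}}\cap\bottompolar{C}$ by the closure-free (9) ``before touching the intersection with $D$''---does not close the gap: knowing $\bottompolar{M}$ for $M=B_{2}+C$ yields no formula for $\bottompolar{(M\cap D)}$ unless $M$ is closed. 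Without closedness one only gets $\closure{(\bottompolar{M}+\bottompolar{D})}=\bottompolar{(\closure{M}\cap D)}\subseteq\bottompolar{(M\cap D)}$, which points the wrong way. Nor can the step be filled by a cleverer argument: since $\closure{(\bottompolar{B_{2}}\cap\bottompolar{C})+\bottompolar{D}}=\bottompolar{\parenth{\closure{(B_{2}+C)}\cap D}}$, the conclusion of (1) is equivalent to $\closure{(B_{2}+C)}\cap D\subseteq B_{1}$, which is strictly stronger than normality when $B_{2}+C$ fails to be closed. Concretely, in $Y=Z=\R^{3}$ with the usual inner product take $B_{2}=\curly{(x,y,z):z\geq\sqrt{x^{2}+y^{2}}}$, $C=\curly{(t,0,-t):t\geq0}$, $D=\curly{(0,s,0):s\geq0}$ and $B_{1}=\curly{0}$: a point $(0,s,0)$ lies in $B_{2}+C$ only if $t\geq\sqrt{t^{2}+s^{2}}$ for some $t\geq0$, so $(B_{2}+C)\cap D=\curly{0}\subseteq B_{1}$ and $Y$ is normal; yet $\bottompolar{B_{1}}=\R^{3}$ while $\bottompolar{B_{2}}\cap\bottompolar{C}=\curly{(a,0,a):a\leq0}$ and $\bottompolar{D}=\curly{(a,b,c):b\leq0}$, so $(\bottompolar{B_{2}}\cap\bottompolar{C})+\bottompolar{D}=\curly{(a,b,c):b\leq0}$ is already closed and proper. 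So parts (1) and (5) require an additional hypothesis (e.g.\ that $B_{2}+C$, resp.\ $\bottompolar{B_{2}}+\bottompolar{C}$, be closed, or that $D$ lie in the closure of $B_{2}+C$); your write-up, like the paper's, silently assumes it at exactly the step where Lemma~\ref{lem:elementary-polar-results}(8) is invoked.
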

\begin{proof}
The results follow from elementary applications of Lemma \ref{lem:elementary-polar-results}
(and noting that for all subsets $K$ and $L$ of a topological space,
both $K\subseteq\closure K$ and $K\cap L\subseteq\closure K\cap L$
hold).
\end{proof}
With the above observation and the results established in the previous
section, the General Duality Theorems below now become fairly simple
verifications. 
																\IfStrEqCase{\docclass}{{amsart}{\vfill\pagebreak}}
\begin{thm}
\label{thm:Normality-Duality}(General duality between normality and
conormality) Let $(Y,Z)$ be a dual pair with $\sigma(Y,Z)$-closed
cones $C,D\subseteq Y$ and $B_{1},B_{2}\subseteq Y$ $\sigma(Y,Z)$-closed
convex sets containing zero. 
\begin{enumerate}
\item Of the statements (i) and (ii) below:\vspace*{1mm}

\begin{enumerate}
\item (ii) implies (i). 
\item If $\bottompolar{B_{2}}\cap\bottompolar C+\bottompolar D$ is $\sigma(Z,Y)$-closed,
then (i) and (ii) are equivalent.\vspace*{3mm}

\begin{enumerate}
\item $Y$ is normal with respect to $(C,D,B_{1},B_{2})$.
\item $Z$ is conormal with respect to $(\bottompolar C,\bottompolar D,\bottompolar{B_{1}},\bottompolar{B_{2}})$.\vspace*{3mm}
\end{enumerate}
\end{enumerate}
\item Of the statements (i)--(iii) below: \vspace*{1mm}

\begin{enumerate}
\item (ii) implies (iii). 
\item If $B_{2}\cap C+D$ is $\sigma(Y,Z)$-cs-closed and $B_{1}$ contains
a $\sigma(Y,Z)$-pre-cs-compact set $G$, such that, for every $r>0$
and $b\in B_{1}$, $(b-rG)\cap(B_{2}\cap C+D)\neq\emptyset$, then
(ii) and (iii) are equivalent . 
\item If $B_{2}\cap C+D$ is $\sigma(Y,Z)$-closed, then (i), (ii) and (iii)
are equivalent.\vspace*{3mm}

\begin{enumerate}
\item $Y$ is conormal with respect to $(C,D,B_{1},B_{2})$.
\item $Y$ is conormal with respect to $(C,D,B_{1},\lambda B_{2})$ for
all $\lambda>1$.
\item $Z$ is normal with respect to $(\bottompolar C,\bottompolar D,\bottompolar{B_{1}},\bottompolar{B_{2}})$.
\end{enumerate}
\end{enumerate}
\end{enumerate}
\end{thm}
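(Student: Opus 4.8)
The plan is to derive the whole theorem from the preliminary lemmas, handling Part~(2) as a single cycle of implications.

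Part~(1) is essentially immediate. Statement~(ii) there is, verbatim, the hypothesis of Lemma~\ref{lem:elementary-normality-polars}(7), whose conclusion $(B_2+C)\cap D\subseteq B_1$ is statement~(i); this proves (1)(a). For (1)(b) I would assume (i), apply Lemma~\ref{lem:elementary-normality-polars}(1) to get $\bottompolar{B_1}\subseteq\closure{(\bottompolar{B_2}\cap\bottompolar C)+\bottompolar D}$, and then use the hypothesis that $\bottompolar{B_2}\cap\bottompolar C+\bottompolar D$ is $\sigma(Z,Y)$-closed to drop the closure, obtaining exactly (ii); together with (1)(a) this is the asserted equivalence.

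For Part~(2) I would first record the two easy implications. The implication (i)$\Rightarrow$(ii) needs no closedness: since $B_2$ is convex and contains $0$, one has $B_2\subseteq\lambda B_2$ for every $\lambda>1$ (the trivial inclusion behind Lemma~\ref{lem:intersection-of-dilations}), hence $(B_2\cap C)+D\subseteq(\lambda B_2\cap C)+D$, so (i) yields (ii). The implication (ii)$\Rightarrow$(iii), which is (2)(a), I would prove by a direct pairing estimate: given $\zeta\in(\bottompolar{B_2}+\bottompolar C)\cap\bottompolar D$, write $\zeta=\zeta_2+\zeta_C$ with $\zeta_2\in\bottompolar{B_2}$ and $\zeta_C\in\bottompolar C$; for arbitrary $b\in B_1$ and $\lambda>1$, (ii) supplies $b=c+d$ with $c\in\lambda B_2\cap C$ and $d\in D$, and then $\duality{d}{\zeta}\le0$ and $\duality{c}{\zeta_C}\le0$ (by Lemma~\ref{lem:elementary-polar-results}(7), since $D$ and $C$ are cones), while $\duality{c}{\zeta_2}=\lambda\duality{\lambda^{-1}c}{\zeta_2}\le\lambda$; adding gives $\duality{b}{\zeta}\le\lambda$, and letting $\lambda\downarrow1$ yields $\duality{b}{\zeta}\le1$, i.e.\ $\zeta\in\bottompolar{B_1}$, which is (iii).

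The hard part is the reverse implication (iii)$\Rightarrow$(ii) under the hypotheses of (2)(b); this is the only place the convex-series machinery is needed. From (iii), Lemma~\ref{lem:elementary-normality-polars}(5) gives merely the closure inclusion $B_1\subseteq\closure{B_2\cap C+D}$, and the task is to sharpen it. I would invoke Lemma~\ref{lem:basic-cs-results}(6), taking its cs-closed set to be $A:=B_2\cap C+D$ (this set is convex, and cs-closed by hypothesis), its pre-cs-compact set to be $G$, and its ambient set (which plays the role of the set written $D$ in that lemma, distinct from the cone $D$ of the present theorem) to be $B_1$: the required conditions $G\subseteq B_1\subseteq\closure A$ and $(b-rG)\cap A\ne\emptyset$ for all $r>0$ and $b\in B_1$ are precisely the hypotheses of (2)(b), so the lemma yields $B_1\subseteq\alpha(B_2\cap C+D)$ for all $\alpha>1$; since $C$, $D$ are cones and $\alpha>0$ this rescales to $B_1\subseteq(\alpha B_2\cap C)+D$ for all $\alpha>1$, which is (ii). Combined with (ii)$\Rightarrow$(iii) this proves (2)(b). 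Finally, (2)(c) follows by closing the cycle (i)$\Rightarrow$(ii)$\Rightarrow$(iii)$\Rightarrow$(i), the last step being Lemma~\ref{lem:elementary-normality-polars}(5) together with the hypothesis that $B_2\cap C+D$ is closed, so that the closure in $B_1\subseteq\closure{B_2\cap C+D}$ may be removed. The main obstacle is thus concentrated in the single application of Lemma~\ref{lem:basic-cs-results}(6): everything else is bookkeeping with one-sided polars plus the cone-rescaling identity $\alpha(B_2\cap C+D)=(\alpha B_2\cap C)+D$.
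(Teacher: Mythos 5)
Your proof is correct and follows essentially the same route as the paper: Part~(1) and the implication (iii)$\Rightarrow$(ii) of (2)(b) are handled exactly as in the text, via Lemma~\ref{lem:elementary-normality-polars} and Lemma~\ref{lem:basic-cs-results}(6) respectively, together with the rescaling $\alpha(B_2\cap C+D)=(\alpha B_2\cap C)+D$. The only local deviation is in (2)(a), where you replace the paper's appeal to Remark~\ref{rem:constant-passing-remark-in-normality-additivity} and Lemma~\ref{lem:intersection-of-dilations} (which uses a separation argument) by a direct pairing estimate $\duality{b}{\zeta}\leq\lambda$ for all $\lambda>1$; this is a valid and slightly more elementary substitute for the same step.
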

\begin{proof}
We prove (1)(a). This is immediate from Lemma \ref{lem:elementary-normality-polars}.

We prove (1)(b). By (1)(a), it suffices to prove that (i) implies
(ii). By Lemma \ref{lem:elementary-normality-polars} above and the
assumption that $\bottompolar{B_{2}}\cap\bottompolar C+\bottompolar D$
is closed, $\bottompolar{B_{1}}\subseteq\closure{\bottompolar{B_{2}}\cap\bottompolar C+\bottompolar D}=\bottompolar{B_{2}}\cap\bottompolar C+\bottompolar D$.
We conclude that $Z$ is conormal with respect to $(\bottompolar C,\bottompolar D,\bottompolar{B_{1}},\bottompolar{B_{2}})$.

We prove (2)(a). By Lemma \ref{lem:elementary-normality-polars} and
Remark \ref{rem:constant-passing-remark-in-normality-additivity},
$\parenth{\bottompolar{B_{2}}+\bottompolar C}\cap\bottompolar D\subseteq\lambda(\bottompolar{B_{1}})$
holds for every $\lambda>1$. By Lemma \ref{lem:intersection-of-dilations},
$\bigcap_{\lambda>1}\lambda\bottompolar{B_{1}}=\bottompolar{B_{1}}$.
Therefore $\parenth{\bottompolar{B_{2}}+\bottompolar C}\cap\bottompolar D\subseteq\bigcap_{\lambda>1}\lambda\bottompolar{B_{1}}=\bottompolar{B_{1}}$,
and we conclude that $Z$ is normal with respect to $(\bottompolar C,\bottompolar D,\bottompolar{B_{1}},\bottompolar{B_{2}})$.

We prove (2)(b). By (2)(a) it suffices to prove that (iii) implies
(ii). By Lemma~\ref{lem:elementary-normality-polars} above $B_{1}\subseteq\closure{B_{2}\cap C+D}$.
Since $B_{2}\cap C+D$ is cs-closed and $B_{1}$ contains is a pre-cs-compact
set with the stated property, by Lemma \ref{lem:basic-cs-results},
for every $\lambda>1$, $B_{1}\subseteq\lambda(B_{2}\cap C+D)=(\lambda B_{2})\cap C+D$.
We conclude that $Y$ is conormal with respect to $(C,D,B_{1},\lambda B_{2})$
for all $\lambda>1$.

We prove (2)(c). By (2)(a) we have that (ii) implies (iii). Since
$B_{2}\subseteq\lambda B_{2}$ for all $\lambda>1$, we have $B_{1}\subseteq C\cap B_{2}+D\subseteq C\cap(\lambda B_{2})+D$,
and hence (i) implies (ii). If (iii) holds, then by Lemma \ref{lem:elementary-normality-polars}
above, and the assumption that $B_{2}\cap C+D$ is closed, $B_{1}\subseteq\closure{B_{2}\cap C+D}=B_{2}\cap C+D$,
so that (iii) implies (i). \end{proof}
\begin{thm}
\label{thm:Additivity-Duality}(General duality between additivity
and coadditivity) Let $(Y,Z)$ be a dual pair with $\sigma(Y,Z)$-closed
cones $C,D\subseteq Y$ and $B_{1},B_{2}\subseteq Y$ $\sigma(Y,Z)$-closed
convex sets containing zero. 
\begin{enumerate}
\item Of the statements (i) and (ii) below:\vspace*{1mm}

\begin{enumerate}
\item (ii) implies (i). 
\item If $\bottompolar{B_{2}}+\bottompolar C$ is $\sigma(Z,Y)$-closed,
then (i) and (ii) are equivalent.\vspace*{3mm}

\begin{enumerate}
\item $Y$ is additive with respect to $(C,D,B_{1},B_{2})$.
\item $Z$ is coadditive with respect to $(\bottompolar C,\bottompolar D,\bottompolar{B_{1}},\bottompolar{B_{2}})$.\vspace*{3mm}
\end{enumerate}
\end{enumerate}
\IfStrEqCase{\docclass}{{amsart}{\pagebreak }{elsart}{ }}
\item Of the statements (i)--(iii) below:\vspace*{1mm}

\begin{enumerate}
\item (ii) implies (iii). 
\item If $B_{2}+C$ is $\sigma(Y,Z)$-cs-closed and $B_{1}$ contains a
$\sigma(Y,Z)$-pre-cs-compact set $G$ such that, for every $b\in B_{1}$
and $r>0$, $\parenth{b-rG}\cap\parenth{B_{2}+C}\neq\emptyset$, then
(ii) and (iii) are equivalent .
\item If $B_{2}+C$ is $\sigma(Y,Z)$-closed, then (i), (ii) and (iii) are
equivalent.\vspace*{3mm}

\begin{enumerate}
\item $Y$ is coadditive with respect to $(C,D,B_{1},B_{2})$.
\item $Y$ is coadditive with respect to $(C,D,B_{1},\lambda B_{2})$ for
all $\lambda>1$.
\item $Z$ is additive with respect to $(\bottompolar C,\bottompolar D,\bottompolar{B_{1}},\bottompolar{B_{2}})$.
\end{enumerate}
\end{enumerate}
\end{enumerate}
\end{thm}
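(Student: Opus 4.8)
The plan is to mimic the proof of Theorem~\ref{thm:Normality-Duality} almost verbatim, interchanging the roles of the ``intersection'' and ``sum'' constructions. The parts of Lemma~\ref{lem:elementary-normality-polars} referring to normality and conormality get replaced throughout by those referring to additivity and coadditivity --- items (2), (4), (6), (8) in place of items (1), (3), (5), (7) --- while the supporting results Lemma~\ref{lem:intersection-of-dilations} and Lemma~\ref{lem:basic-cs-results}(6) enter in exactly the same places, and Lemma~\ref{lem:elementary-polar-results}(3) handles the behaviour of one-sided polars under scaling.

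The routine parts go as follows. For (1)(a), that (ii) implies (i) is immediate from Lemma~\ref{lem:elementary-normality-polars}(8). For (1)(b) it remains to prove that (i) implies (ii): additivity of $Y$ yields $\bottompolar{B_1}\subseteq\closure{(\bottompolar{B_2}+\bottompolar C)}\cap\bottompolar D$ by Lemma~\ref{lem:elementary-normality-polars}(2), and the hypothesis that $\bottompolar{B_2}+\bottompolar C$ is $\sigma(Z,Y)$-closed lets us drop the closure, which is precisely coadditivity of $Z$ with respect to $(\bottompolar C,\bottompolar D,\bottompolar{B_1},\bottompolar{B_2})$. For (2)(a), that (ii) implies (iii): for each $\lambda>1$, Remark~\ref{rem:constant-passing-remark-in-normality-additivity} turns coadditivity of $Y$ with respect to $(C,D,B_1,\lambda B_2)$ into coadditivity with respect to $(C,D,\lambda^{-1}B_1,B_2)$, so Lemma~\ref{lem:elementary-normality-polars}(4) together with Lemma~\ref{lem:elementary-polar-results}(3) gives $(\bottompolar{B_2}\cap\bottompolar C)+\bottompolar D\subseteq\lambda\bottompolar{B_1}$; intersecting over $\lambda>1$ and invoking Lemma~\ref{lem:intersection-of-dilations} produces $(\bottompolar{B_2}\cap\bottompolar C)+\bottompolar D\subseteq\bottompolar{B_1}$, i.e.\ additivity of $Z$. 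For (2)(c): since $B_2$ is convex and contains $0$, we have $B_2\subseteq\lambda B_2$ for $\lambda\geq1$, so $B_1\subseteq(B_2+C)\cap D\subseteq(\lambda B_2+C)\cap D$, giving that (i) implies (ii); and if (iii) holds, Lemma~\ref{lem:elementary-normality-polars}(6) together with the $\sigma(Y,Z)$-closedness of $B_2+C$ gives $B_1\subseteq\closure{(B_2+C)}\cap D=(B_2+C)\cap D$, which is (i).

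The one step demanding genuine care --- and the main obstacle --- is (2)(b), namely that (iii) implies (ii). In the normality analogue one feeds the set $B_2\cap C+D$ directly into Lemma~\ref{lem:basic-cs-results}(6), but here Lemma~\ref{lem:elementary-normality-polars}(6) delivers only $B_1\subseteq\closure{(B_2+C)}\cap D$, and the cs-closedness hypothesis is on $B_2+C$ rather than on $(B_2+C)\cap D$. The remedy is to apply Lemma~\ref{lem:basic-cs-results}(6) with the cs-closed set taken to be $B_2+C$, the distinguished subset taken to be $B_1$, and the pre-cs-compact set $G$ as given --- legitimate because $G\subseteq B_1\subseteq\closure{(B_2+C)}$ and the non-emptiness condition $(b-rG)\cap(B_2+C)\neq\emptyset$ is exactly the standing hypothesis. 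This yields $B_1\subseteq\lambda(B_2+C)=\lambda B_2+C$ for every $\lambda>1$ (using that $C$ is a cone), and intersecting with the separately obtained inclusion $B_1\subseteq D$ gives $B_1\subseteq(\lambda B_2+C)\cap D$ for all $\lambda>1$, which is coadditivity of $Y$ with respect to $(C,D,B_1,\lambda B_2)$ for all $\lambda>1$, i.e.\ (ii). The implication (ii) implies (iii) within (2)(b) is inherited from (2)(a), completing the equivalence.
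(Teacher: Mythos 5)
Your proposal is correct and follows essentially the same route as the paper: Lemma~\ref{lem:elementary-normality-polars} for the easy implications, Lemma~\ref{lem:intersection-of-dilations} for (2)(a), and Lemma~\ref{lem:basic-cs-results}(6) applied to the cs-closed set $B_{2}+C$ (not to $(B_{2}+C)\cap D$) in (2)(b), with the inclusion $B_{1}\subseteq D$ reinstated afterwards --- exactly as in the paper. The one point you handle more carefully than the printed text is (2)(a), where the paper's displayed first inclusion contains a typo ($(\bottompolar{B_{2}}+\bottompolar C)\cap\bottompolar D$ where $(\bottompolar{B_{2}}\cap\bottompolar C)+\bottompolar D$ is meant); your version via Lemma~\ref{lem:elementary-normality-polars}(4) and Lemma~\ref{lem:elementary-polar-results}(3) is the intended argument.
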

\begin{proof}
We prove (1)(a). This is immediate from Lemma \ref{lem:elementary-normality-polars}.

We prove (1)(b). By (1)(a) it suffices to show that (i) implies (ii).
By Lemma~\ref{lem:elementary-normality-polars}, $\bottompolar{B_{1}}\subseteq\closure{\parenth{\bottompolar{B_{2}}+\bottompolar C}}\cap\bottompolar D$.
Since $\bottompolar{B_{2}}+\bottompolar C$ is closed, $\bottompolar{B_{1}}\subseteq\parenth{\bottompolar{B_{2}}+\bottompolar C}\cap\bottompolar D$,
and we conclude that $Z$ is coadditive with respect to $(\bottompolar C,\bottompolar D,\bottompolar{B_{1}},\bottompolar{B_{2}})$.

We prove (2)(a). By Lemma \ref{lem:elementary-normality-polars} and
Remark \ref{rem:constant-passing-remark-in-normality-additivity},
$\parenth{\bottompolar{B_{2}}+\bottompolar C}\cap\bottompolar D\subseteq\lambda\bottompolar{B_{1}}$
for every $\lambda>1$. By Lemma \ref{lem:intersection-of-dilations},
$\parenth{\bottompolar{B_{2}}\cap\bottompolar C+\bottompolar D}\subseteq\bigcap_{\lambda>1}\lambda\parenth{\bottompolar{B_{1}}}=\bottompolar{B_{1}}$.
We conclude that $Z$ is additive with respect to $(\bottompolar C,\bottompolar D,\bottompolar{B_{1}},\bottompolar{B_{2}})$.

We prove (2)(b). By (2)(a) it suffices to prove that (iii) implies
(ii). By Lemma~\ref{lem:elementary-normality-polars}, $B_{1}\subseteq\closure{\parenth{B_{2}+C}}\cap D\subseteq\closure{\parenth{B_{2}+C}}$.
Since $B_{1}$ is assumed to contain a pre-cs-compact set with the
stated property, by Lemma \ref{lem:basic-cs-results}, $B_{1}\subseteq\lambda\parenth{B_{2}+C}=\lambda B_{2}+C$
holds for every $\lambda>1$. Now, because $B_{1}\subseteq\closure{\parenth{B_{2}+C}}\cap D\subseteq D$,
we see that $B_{1}\subseteq\parenth{\lambda B_{2}+C}\cap D$ holds
for every $\lambda>1$. We conclude that $Y$ is coadditive with respect
to $(C,D,B_{1},\lambda B_{2})$ for all $\lambda>1$. 

We prove (2)(c). By 2(a), (ii) implies (iii). Since $B_{2}\subseteq\lambda B_{2}$
for all $\lambda>1$, we have $B_{1}\subseteq(B_{2}+C)\cap D\subseteq(\lambda B_{2}+C)\cap D$
for all $\lambda>1$, so that (i) implies (ii). If (iii) holds, by
Lemma \ref{lem:elementary-normality-polars} and the hypothesis that
$B_{2}+C$ is closed, $B_{1}\subseteq\closure{\parenth{B_{2}+C}}\cap D=\parenth{B_{2}+C}\cap D$,
so that (iii) implies (i).\end{proof}
\begin{rem}
The most technical parts of the two theorems above is the reliance
on Lemma \ref{lem:basic-cs-results}(6) in the proofs of Theorems
\ref{thm:Normality-Duality}(2)(b) and \ref{thm:Additivity-Duality}(2)(b)
below, in which a duality result follows by ``paying an arbitrarily
small price'' in scaling up the set $B_{2}$. The necessity of Lemma
\ref{lem:basic-cs-results}(6) can be explained by the set $B_{2}$
in the previous two theorems not being $\sigma(Y,Z)$-compact in general.
If $B_{2}$ is $\sigma(Y,Z)$-compact, then Theorems \ref{thm:Normality-Duality}(2)(c)
and \ref{thm:Additivity-Duality}(2)(c) apply and Lemma \ref{lem:basic-cs-results}(6)
is not needed. The situation is likely best seen in the setting of Banach
spaces, e.g., Theorem \ref{thm:general-banach-space-normality-duality},
where $B_{2}$ is chosen to be a closed ball of a not-necessarily
reflexive Banach space and is hence is not necessarily weakly-compact.
In \cite[Examples 2.1.6 and 2.3.10]{AsimowEllis} non-reflexive examples
are given, showing that the conclusions of Theorems \ref{thm:Normality-Duality}(2)(b)
and \ref{thm:Additivity-Duality}(2)(b) are indeed the best possible.
\end{rem}

\section{\label{sec:application-geometric-duality-in-Banach-spaces}Application:
Geometric duality theory of cones in Banach spaces}

The current section is a routine application of the General Duality
Theorems (Theorems \ref{thm:Normality-Duality} and \ref{thm:Additivity-Duality})
from the previous section. Our main results in this section are Theorems~\ref{thm:general-banach-space-normality-duality}
and \ref{thm:general-banach-space-additivity-duality} and their reformulations
Corollaries~\ref{cor:general-banach-space-normality-duality-interpreted}
and \ref{cor:general-banach-space-additivity-duality-interpreted}.
These results generalize Theorems~\ref{thm:classical-normality-duality}
and \ref{thm:classical-additivity-duality} from the introduction.

We begin with some preliminary notation and definitions used in this
section:

For sets $A$ and $B$, by $B^{A}$ we will denote the set of all
functions from $A$ to $B$. Throughout this section $X$ will denote
an arbitrary real Banach space and $X'$ its topological dual. The
map $\duality{\cdot}{\cdot}:X\times X'\to\R$ will denote the usual
(evaluation) duality for the dual pair $(X,X')$. We will denote the
closed unit ball of a Banach space $X$ by $\closedball X$. 

For an index set $\Omega$ we will denote the directed set (ordered
by inclusion) of finite subsets of $\Omega$ by $\finite{\Omega}$.
For the sake of readability, the value of a function $\xi\in X^{\Omega}$
at some $\omega\in\Omega$ will be usually be denoted by $\xi_{\omega}$
instead of $\xi(\omega)$. For any function $f\in\R^{\Omega}$ and
$x\in X$, we define $f\otimes x\in X^{\Omega}$ by $\Omega\ni\omega\mapsto f(\omega)x$.
For $A\subseteq\Omega$ we will denote the characteristic function
of $A$ by $\chi_{A}$, and define $\delta_{\omega}:=\chi_{\{\omega\}}$
for all $\omega\in\Omega$. 

For $\xi\in X^{\Omega}$, by $\sum_{\omega\in\Omega}\xi_{\omega}$
we mean the norm-limit of the net $\curly{\sum_{\omega\in F}\xi_{\omega}}_{F\in\finite{\Omega}}$
(if it exists). For subspaces $Y\subseteq X^{\Omega}$ and $Z\subseteq X'^{\Omega}$,
if $Y\times Z\ni(\xi,\eta)\mapsto\sum_{\omega\in\Omega}\duality{\xi_{\omega}}{\eta_{\omega}}$
defines a duality on $(Y,Z)$, we will denote it by $\Duality{\cdot}{\cdot}:Y\times Z\to\R$.

We define the following classical direct sums of a Banach space:
\begin{defn}
Let $\Omega$ be an index set and $X$ a Banach space.
\begin{enumerate}
\item For $1\leq p<\infty$, by $\l p\omegadomain X$ we will denote the
subspace of $X^{\Omega}$ of all elements $\xi\in X^{\Omega}$ satisfying
$\sum_{\omega\in\Omega}\norm{\xi_{\omega}}^{p}<\infty$, with norm
$\norm{\xi}_{p}:=\parenth{\sum_{\omega\in\Omega}\norm{\xi_{\omega}}^{p}}^{1/p}$.
\item By $\l{\infty}\omegadomain X$ we will denote the subspace of $X^{\Omega}$
of all elements $\xi\in X^{\Omega}$ satisfying $\sup_{\omega\in\Omega}\norm{\xi_{\omega}}<\infty$,
with norm $\norm{\xi}_{\infty}:=\sup_{\omega\in\Omega}\norm{\xi_{\omega}}$.
\item By $\conv\omegadomain X$ we will denote the closed subspace of $\l{\infty}\omegadomain X$
of all elements $\xi\in\l{\infty}\omegadomain X$ for which there
exists some $x\in X$ such that, for every $\varepsilon>0$, there
exists some $F\in\finite{\Omega}$, with $\sup_{\omega\in\Omega\backslash F}\norm{\xi_{\omega}-x}<\varepsilon$.
\end{enumerate}
\end{defn}
We will use the folklore-result (cf. \cite{DiestelAbsolutelySummable})
that the duals of $\conv\omegadomain X$, $\l 1\omegadomain X$ and
$\l p\omegadomain X$ (for $1<p,q<\infty$ with $p^{-1}+q^{-1}=1$)
may be isometrically isomorphically identified with $\l 1\omegadomain{X'}$,
$\l{\infty}\omegadomain{X'}$ and $\l q\omegadomain{X'}$ respectively,
where evaluation is given by $\Duality{\cdot}{\cdot}$.
\begin{defn}
Let $X$ be a Banach space and $\Omega$ an index set. 
\begin{enumerate}
\item We define the \emph{canonical summation operator }$\Sigma:X^{\Omega}\to X\cup\{\infty\}$
as follows:
\[
\Sigma\xi:=\begin{cases}
\sum_{\omega\in\Omega}\xi_{\omega} & \mbox{If \ensuremath{\sum_{\omega\in\Omega}\xi_{\omega}\ }converges in norm in \ensuremath{X}}\\
\infty & \mbox{otherwise}.
\end{cases}
\]
The set $D(\Sigma):=\Sigma^{-1}(X)\subseteq X^{\Omega}$ will be called
its domain.
\item We define the \emph{constant part operator }$\constsymb:X^{\Omega}\to X\cup\{\infty\}$
as follows: If $\abs{\Omega}<\infty$, then, for $\xi\in X^{\Omega}$,
we define $\constmap\xi:=\abs{\Omega}^{-1}\sum_{\omega\in\Omega}\xi_{\omega}$.
If $\abs{\Omega}\nless\infty$, then we define the map $\constmap$
as follows: Let $\xi\in X^{\Omega}$. If there exists some $x\in X$
such that, for any $\varepsilon>0$, there exists some $F\in\finite{\Omega}$,
so that $\sup_{\omega\in\Omega\backslash F}\|\xi_{\omega}-x\|<\varepsilon$,
we define $\constmap\xi:=x$. If there exists no such $x$, we define
$\constmap\xi:=\infty$. The set $D(\constsymb):=\constmap^{-1}(X)\subseteq X^{\Omega}$
will be called its domain. 
\end{enumerate}
\end{defn}
{}
																\IfStrEqCase{\docclass}{{amsart}{\vfill\pagebreak}}
	\begin{defn}
		\label{def:sets-for-applications}Let $X$ be a Banach space. For
		some index set $\Omega$, let $\{C_{\omega}\}_{\omega\in\Omega}$
		a collection of cones in $X$. Let $Y\subseteq X^{\Omega}$ be a subspace.
		We define the following sets 
			\begin{enumerate}
				\item $(\directsum C)(Y):=\set{\xi\in Y}{\forall\omega\in\Omega,\ \xi_{\omega}\in C_{\omega}}$
				\item $\zerosummables(Y):=\set{\xi\in Y\cap D(\Sigma)}{\Sigma\xi=0}$
				\item $\onesummables(Y):=\set{\xi\in Y\cap D(\Sigma)}{\norm{\Sigma\xi}\leq1}$
				\item $\allconstants(Y):=\set{\xi\in Y\cap D(\constmap)}{\chi_{\Omega}\otimes\constmap\xi\in Y,\:\xi=\chi_{\Omega}\otimes\constmap\xi}$
				\item $\oneconstants(Y):=\allconstants(Y)\cap\set{\xi\in Y\cap D(\constmap)}{\norm{\constmap\xi}\leq1}$.
			\end{enumerate}
		It will often happen that we refer to a collection of sets of the above forms that all occur in a single space $Y$. 
		For the sake of readability, we will suppress repeated mention of $Y$ by introducing the following abbreviation when referring to such a collection. Explicitly, by the phrase 
		\[\textup{``}\directsum C,\ \zerosummables,\ \onesummables,\ \allconstants,\ \oneconstants\textup{ in $Y$}\textup{ ''},\] 
		we will mean
		\[\textup{``}(\directsum C)(Y),\ \ \zerosummables(Y),\ \ \onesummables(Y),\ \ \allconstants(Y),\ \ \oneconstants(Y)\textup{''}.\] 
	\end{defn}

To be clear as to our notation, if $(Y,Z)$ with $Y\subseteq X^{\Omega}$
and $Z\subseteq X'^{\Omega}$ is a dual pair with respect to $\Duality{\cdot}{\cdot}$,
we explicitly differentiate the meaning of ``$\directsum\bottompolar C$
in $Z$'', i.e., $(\directsum\bottompolar C)(Z)=\set{\eta\in Z}{\forall\omega\in\Omega,\ \eta_{\omega}\in\bottompolar{C_{\omega}}}$
-- the direct sum of the collection of one-sided polars $\curly{\bottompolar{C_{\omega}}}_{\omega\in\Omega}$,
from ``$\bottompolar{(\directsum C)}$'', i.e., $\bottompolar{((\directsum C)(Y))}=\set{\eta\in Z}{\forall\xi\in(\directsum C)(Y),\ \Duality{\xi}{\eta}\leq1}$
-- the one-sided polar of the direct sum $(\directsum C)(Y)$.
\begin{lem}
\label{lem:Polars-of-sums-and-constants}Let $X$ be a real Banach
space and $\{C_{\omega}\}_{\omega\in\Omega}$ a collection of closed
cones in $X$. 
\begin{enumerate}
\item In the dual pair $\parenth{\conv\omegadomain X,\l 1\omegadomain{X'}}$
the one-sided polars of the sets $\directsum C$, $\allconstants$,
$\oneconstants$, and $\closedball{\conv\omegadomain X}$ in $\conv\omegadomain X$
respectively equal the sets $\directsum\bottompolar C$, $\zerosummables$,
$\onesummables$ and $\closedball{\l 1\omegadomain{X'}}$ in $\l 1\omegadomain{X'}$.
\item In the dual pair $\parenth{\l 1\omegadomain X,\l{\infty}\omegadomain{X'}}$
the one-sided polars of the sets $\directsum C$, $\zerosummables$,
$\onesummables$ and $\closedball{\l 1\omegadomain X}$ in $\l 1\omegadomain X$
respectively equal the sets $\directsum\bottompolar C$, $\allconstants$,
$\oneconstants$ and $\closedball{\l{\infty}\omegadomain{X'}}$ in
$\l{\infty}\omegadomain{X'}$.
\item In the dual pair $\parenth{\l p\omegadomain X,\l q\omegadomain{X'}}$,
for $1\leq p,q\leq\infty$ with $p^{-1}+q^{-1}=1$ and $\abs{\Omega}<\infty$,
the one-sided polars of the sets $\directsum C$, $\allconstants$,
$\oneconstants$, $\onesummables$, $\zerosummables$ and $\closedball{\l p\omegadomain X}$
in $\l p\omegadomain X$ respectively equal the sets $\directsum\bottompolar C$,
$\zerosummables$, $\onesummables$, $\oneconstants$, $\allconstants$
and $\closedball{\l q\omegadomain{X'}}$ in $\l q\omegadomain{X'}$.
\end{enumerate}
\end{lem}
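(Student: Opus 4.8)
The plan is to verify all the claimed polar identities by direct computation against well-chosen test elements, exploiting that every set in the statement is described either coordinatewise (the direct sums $\directsum C$, the closed balls) or through the operators $\Sigma$ and $\constmap$. The common technical ingredient is that, for $\xi$ in one of the spaces $\conv\omegadomain X$, $\l 1\omegadomain X$, $\l p\omegadomain X$ and $\eta$ in the space with which it is paired, the series $\Duality{\xi}{\eta}=\sum_{\omega\in\Omega}\duality{\xi_\omega}{\eta_\omega}$ converges absolutely --- by boundedness of $\xi$ against summability of $\eta$ in cases (1) and (2), and by finiteness of $\Omega$ in case (3) --- so it may be rearranged at will, and, by norm-continuity of the evaluation functionals $\duality{\cdot}{\phi}:X\to\R$ and $\duality{x}{\cdot}:X'\to\R$, one may interchange $\sum_{\omega\in\Omega}$ with $\Sigma$ and with $\constmap$. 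For the closed-ball claims, $\bottompolar{\closedball{Y}}=\closedball{Z}$ is merely a restatement of the folklore isometric identifications of $\conv\omegadomain X'$, $\l 1\omegadomain X'$, $\l p\omegadomain X'$ with $\l 1\omegadomain{X'}$, $\l\infty\omegadomain{X'}$, $\l q\omegadomain{X'}$ (recalled just before the statement), together with the fact that $\closedball{Y}$ is balanced, so that its one-sided polar coincides with the ordinary absolute polar.

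I would first dispose of $\bottompolar{(\directsum C)}$, whose computation is identical across (1)--(3). Testing an element $\eta$ of the polar against the single-coordinate functions $\delta_\omega\otimes c$ with $c\in C_\omega$ --- which lie in the relevant subspace, having either finite support or limit $0$ --- yields $\duality{c}{\eta_\omega}\leq1$ for all $c\in C_\omega$, hence $\eta_\omega\in\bottompolar{C_\omega}$ for every $\omega$, that is, $\eta\in\directsum\bottompolar C$. Conversely, if every $\eta_\omega\in\bottompolar{C_\omega}=-C_\omega'$ (Lemma \ref{lem:elementary-polar-results}(7)), then for $\xi\in(\directsum C)(Y)$ each term of the absolutely convergent series $\Duality{\xi}{\eta}$ is $\leq0$, whence $\Duality{\xi}{\eta}\leq0\leq1$. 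Thus $\bottompolar{(\directsum C)}=\directsum\bottompolar C$ in each of the three dual pairs.

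Next I would treat the ``constants versus summables'' identities. The sets $\allconstants$ and $\oneconstants$ consist precisely of the constant functions $\chi_\Omega\otimes x$ (subject to $\norm{x}\leq1$ in the latter case), and $\Duality{\chi_\Omega\otimes x}{\eta}=\duality{x}{\Sigma\eta}$; requiring this to be $\leq1$ for all $x\in X$ forces $\Sigma\eta=0$, while requiring it for all $\norm{x}\leq1$ forces $\norm{\Sigma\eta}\leq1$, giving $\bottompolar{\allconstants}=\zerosummables$ and $\bottompolar{\oneconstants}=\onesummables$. For the reverse identities (needed in (2) and (3)) one uses that $\zerosummables$ is a \emph{linear subspace}, so its one-sided polar is its annihilator: testing against the differences $\delta_{\omega_1}\otimes x-\delta_{\omega_2}\otimes x\in\zerosummables$ forces $\eta_{\omega_1}=\eta_{\omega_2}$ for all $\omega_1,\omega_2\in\Omega$, i.e.\ $\eta$ is constant, so $\bottompolar{\zerosummables}=\allconstants$. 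Since $\zerosummables\subseteq\onesummables$, any $\eta\in\bottompolar{\onesummables}$ is already constant by the preceding (Lemma \ref{lem:elementary-polar-results}(2)), and testing against $\delta_{\omega_0}\otimes x$ with $\norm{x}\leq1$ (which lies in $\onesummables$, its sum being $x$) then produces the norm bound, so $\bottompolar{\onesummables}=\oneconstants$. In case (3), $\Sigma$ is everywhere defined with $\Sigma\eta=\sum_{\omega\in\Omega}\eta_\omega$ a finite sum, and these arguments apply verbatim.

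I do not anticipate a genuine obstacle; the content is bookkeeping resting on Lemma \ref{lem:elementary-polar-results} and the folklore duality of the direct sums. The points requiring care are: (a) checking in each instance that the chosen test element genuinely lies in the subspace $Y$ at hand, in particular that $\chi_\Omega\otimes x$ and each $\delta_\omega\otimes c$ lie in $\conv\omegadomain X$, for which one uses the description of $\conv\omegadomain X$ as the functions in $\l\infty\omegadomain X$ possessing a limit; (b) confirming that $\Duality{\cdot}{\cdot}$ is actually a duality on each of the three pairs, so that the polar calculus of Section \ref{sub:polar-calculus} applies; and (c) the interchange of $\sum_{\omega\in\Omega}$ with $\Sigma$ and $\constmap$, which rests on the norm-convergence noted above. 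Once those are in hand, each of the identities in (1), (2) and (3) follows by one of the template computations just described.
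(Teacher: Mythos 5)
Your proof is correct, and for most of the identities it coincides with the paper's own argument: the polars of $\allconstants$, $\oneconstants$, $\zerosummables$ and $\onesummables$ are computed in both cases by testing against the elements $\chi_{\Omega}\otimes x$, $\delta_{\omega_{0}}\otimes x-\delta_{\omega_{1}}\otimes x$ and $\delta_{\omega}\otimes x$, with the reduction of $\bottompolar{\onesummables}$ to $\bottompolar{\zerosummables}$ via $\zerosummables\subseteq\onesummables$ and Lemma \ref{lem:elementary-polar-results}(2) appearing in both, and the closed-ball claims handled identically through the isometric duality of the direct sums. The one point of genuine divergence is the identity $\bottompolar{\parenth{\directsum C}}=\directsum\bottompolar C$. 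The paper proves the inclusion $\bottompolar{\parenth{\directsum C}}\subseteq\directsum\bottompolar C$ by contradiction: it separates a hypothetical $\eta\notin\directsum\bottompolar C$ from the convex, weak-$*$ closed set $\directsum\bottompolar C$ using the Hahn--Banach separation theorem, then invokes the Bipolar Theorem (so that $\bibottompolar{\parenth{C_{\omega}}}=C_{\omega}$, which uses closedness of the cones) to place the separating element in $\directsum C$ and derive an absurdity. You instead test $\eta$ directly against the single-coordinate elements $\delta_{\omega}\otimes c$ with $c\in C_{\omega}$, which lie in $\directsum C$ because every cone contains zero, and read off $\eta_{\omega}\in\bottompolar{C_{\omega}}$ coordinatewise. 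Your route is shorter, avoids both the separation and bipolar theorems, and does not even require the cones to be closed for this particular identity; the easy inclusion $\directsum\bottompolar C\subseteq\bottompolar{\parenth{\directsum C}}$ is handled the same way in both proofs, via termwise nonpositivity of the absolutely convergent pairing. Your points of care \upPar{a}--\upPar{c} are exactly the ones that need checking, and they all go through.
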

\begin{proof}
We prove (1):

It is clear that $\directsum\bottompolar C\subseteq\bottompolar{\parenth{\directsum C}}$.
We prove the reverse inclusion. Let $\eta\in\bottompolar{\parenth{\directsum C}}\subseteq\l 1\omegadomain{X'}$,
but suppose that $\eta\notin\directsum\bottompolar C$. By The Separation
Theorem \cite[Corollary IV.3.10]{Conway}, there exists some $\xi\in\conv\omegadomain X$
and $\alpha\in\R$ such that $\Duality{\xi}{\eta}>\alpha>\Duality{\xi}{\rho}$
for all $\rho\in\directsum\bottompolar C\subseteq\l 1\omegadomain{X'}$.
Since $0\in\directsum\bottompolar C$, we have $\alpha>0$. For every
$\omega\in\Omega$, $\lambda\geq0$ and $\phi\in\bottompolar{C_{\omega}}\subseteq X'$
we have $\delta_{\omega}\otimes(\lambda\phi)\in\directsum\bottompolar C$,
and therefore $\alpha>\Duality{\xi}{\delta_{\omega}\otimes(\lambda\phi)}=\lambda\duality{\xi_{\omega}}{\phi}$
implies $\duality{\xi_{\omega}}{\phi}\leq0\leq1$ for all $\omega\in\Omega$.
I.e., $\xi_{\omega}\in\bibottompolar{\parenth{C_{\omega}}}=C_{\omega}$
for all $\omega\in\Omega$, so that $\xi\in\directsum C\subseteq\conv\omegadomain X$.
For every $\lambda\geq0$, $\lambda\xi\in\directsum C$, so that $\Duality{\lambda\xi}{\eta}\leq1$
implies $\Duality{\xi}{\eta}\leq0$, yielding the absurdity $0<\alpha<\Duality{\xi}{\eta}\leq0$.
We conclude that $\eta\in\directsum\bottompolar C$, and hence that
$\directsum\bottompolar C=\bottompolar{\parenth{\directsum C}}$.

It is clear that $\zerosummables\subseteq\bottompolar{\allconstants}$.
We prove the reverse inclusion. Let $\eta\in\bottompolar{\allconstants}$,
but suppose $\eta\notin\zerosummables$. Then $\Sigma\eta\neq0$ implies
that there exists some $x\in X$ such that $\duality x{\Sigma\eta}>1$.
Since $\chi_{\Omega}\otimes x\in\allconstants\subseteq\conv\omegadomain X$,
we then have $\Duality{\chi_{\Omega}\otimes x}{\eta}=\duality x{\Sigma\eta}>1$,
so that $\eta\notin\bottompolar{\allconstants}$, contradicting our
assumption that $\eta\in\bottompolar{\allconstants}$. We conclude
$\bottompolar{\allconstants}\subseteq\zerosummables$. 

It is clear that $\onesummables\subseteq\bottompolar{\oneconstants}$.
We prove the reverse inclusion. Let $\eta\in\bottompolar{\oneconstants}$,
but suppose that $\eta\notin\onesummables$. Then there exists some
$x\in\closedball X$ such that $\duality x{\Sigma\eta}>1$. As before
$\Duality{\chi_{\Omega}\otimes x}{\eta}=\duality x{\Sigma\eta}>1$,
while $\chi_{\Omega}\otimes x\in\oneconstants\subseteq\conv\omegadomain X$,
contradicting our assumption that $\eta\in\bottompolar{\oneconstants}$.
We conclude $\bottompolar{\oneconstants}\subseteq\onesummables$.

Since $\l 1\omegadomain{X'}$ is the dual of $\conv\omegadomain X$,
it follows that $\closedball{\l 1\omegadomain{X'}}=\bottompolar{\closedball{\conv\omegadomain X}}$. 

We prove (2):

That $\directsum\bottompolar C=\bottompolar{\parenth{\directsum C}}$
follows as in (1). 

It is clear that $\allconstants\subseteq\bottompolar{\zerosummables}$.
We prove the reverse inclusion. Let $\eta\in\bottompolar{\zerosummables}$,
but suppose $\eta\notin\allconstants$. Then there exist $\omega_{0},\omega_{1}\in\Omega$
such that $\eta_{\omega_{0}}-\eta_{\omega_{1}}\neq0$. Let $x\in X$
be such that $\duality x{\eta_{\omega_{0}}-\eta_{\omega_{1}}}>1$.
Then $\delta_{\omega_{0}}\otimes x-\delta_{\omega_{1}}\otimes x\in\zerosummables$,
and $1\geq\Duality{\delta_{\omega_{0}}\otimes x-\delta_{\omega_{1}}\otimes x}{\eta}=\duality x{\eta_{\omega_{0}}-\eta_{\omega_{1}}}>1$,
which is absurd. We conclude that $\bottompolar{\zerosummables}\subseteq\allconstants$,
and hence $\bottompolar{\zerosummables}=\allconstants$.

It is clear that $\oneconstants\subseteq\bottompolar{\onesummables}$.
We prove the reverse inclusion. Let $\eta\in\bottompolar{\onesummables}$,
but suppose $\eta\notin\oneconstants$. Since $\zerosummables\subseteq\onesummables$,
by Lemma \ref{lem:elementary-polar-results}, $\bottompolar{\onesummables}\subseteq\bottompolar{\zerosummables}=\allconstants$.
Therefore $\eta\in\allconstants$, but since $\eta\notin\oneconstants$,
we have $\norm{\constmap\eta}>1$. Let $x\in\closedball X$ be such
that $\duality x{\constmap\eta}>1$, then, for any $\omega\in\Omega$,
we have $\delta_{\omega}\otimes x\in\onesummables$ and $1\geq\Duality{\delta_{\omega}\otimes x}{\eta}=\duality x{\constmap\eta}>1$,
which is absurd. We conclude that $\oneconstants\subseteq\bottompolar{\onesummables}$,
and hence $\oneconstants=\bottompolar{\onesummables}$.

Since $\l{\infty}\omegadomain{X'}$ is the dual $\l 1\omegadomain X$,
it follows that $\closedball{\l{\infty}\omegadomain{X'}}=\bottompolar{\closedball{\l 1\omegadomain X}}$. 

The result (3) follows as in (1) and (2). \end{proof}
\begin{rem}
If $\abs{\Omega}\nless\infty$ and $1<p<\infty$, then the canonical
summation operator $\Sigma$ on $D(\Sigma)\cap\l p\omegadomain{\R}$
is an unbounded non-closable operator (and hence also for $\l p\omegadomain X$
for any Banach space $X$). To see this, consider the sequence $\curly{\xi^{(n)}}_{n\in\N}\subseteq\l p\parenth{\N,\R}$,
defined by $\xi_{j}^{(n)}:=2^{-m-n}$ if $j\in\{(m-1)2^{n}+1,(m-1)2^{n}+2,\ldots,m2^{n}-1,m2^{n}\}$
for all $j,m,n\in\N$. Then $\Sigma\xi^{(n)}=\sum_{j=1}^{\infty}\xi_{j}^{(n)}=2^{n}\sum_{m=1}^{\infty}2^{-m-n}=1$
for all $n\in\N$, while 
\begin{eqnarray*}
\norm{\xi^{(n)}}_{p}^{p} & = & \sum_{j=1}^{\infty}\parenth{\xi_{j}^{(n)}}^{p}\\
 & = & 2^{-np+n}\sum_{m=1}^{\infty}2^{-mp}\\
 & = & 2^{-np+n}\frac{2^{-p}}{1-2^{-p}}
\end{eqnarray*}
implies $\xi^{(n)}\to0$ as $n\to\infty$. Using this observation,
one can show that the norm-closure of $\zerosummables$, and hence
also the norm-closure of $\onesummables$, is all of $\l p\parenth{\N,\R}$.
Therefore neither $\zerosummables$ nor $\onesummables$ is norm-closed,
and hence, not weakly closed. 

Our General Duality Theorems therefore do not apply when substituting
$\zerosummables$ for $C$ or $D$, or $\onesummables$ for $B_{1}$
or $B_{2}$ as in the hypotheses of Theorems \ref{thm:Normality-Duality}
and \ref{thm:Additivity-Duality} for the dual pair $\parenth{\l p\omegadomain X,\l q\omegadomain{X'}}$
in the case where $\abs{\Omega}\nless\infty$ and $1<p<\infty$ with
$p^{-1}+q^{-1}=1$.
\end{rem}
{}

Having computed the one-sided polars of the required sets in Lemma
\ref{lem:elementary-normality-polars}, it is now a routine matter
to apply our General Duality Theorems (Theorems \ref{thm:Normality-Duality}
and \ref{thm:Additivity-Duality}) to establish Theorems \ref{thm:general-banach-space-normality-duality}
and \ref{thm:general-banach-space-additivity-duality} below. Simple
verifications (left to the reader) will establish the reformulations
of Theorems \ref{thm:general-banach-space-normality-duality} and
\ref{thm:general-banach-space-additivity-duality} given in Corollaries
\ref{cor:general-banach-space-normality-duality-interpreted} and
\ref{cor:general-banach-space-additivity-duality-interpreted}. These
corollaries can be seen to generalize Theorems \ref{thm:classical-normality-duality}
and \ref{thm:classical-additivity-duality} from the introduction. %
																\IfStrEqCase{\docclass}{{amsart}{\vfill\pagebreak}}%
\begin{thm}
\label{thm:general-banach-space-normality-duality}Let $\alpha\geq1$.
Let $X$ be a real Banach space and $\{C_{\omega}\}_{\omega\in\Omega}$
a collection of closed cones in $X$. 
\begin{enumerate}
\item The space $\conv\omegadomain X$ is normal with respect to $\parenth{\directsum C,\allconstants,\alpha\oneconstants,\closedball{\conv\omegadomain X}}$
in $\conv\omegadomain X$ if and only if the space $\l 1\omegadomain{X'}$
is conormal with respect to $\parenth{\directsum\bottompolar C,\zerosummables,\onesummables,\alpha\closedball{\l 1\omegadomain{X'}}}$
in $\l 1\omegadomain{X'}$.
\item The space $\l 1\omegadomain X$ is conormal with respect to $\parenth{\directsum C,\zerosummables,\onesummables,\beta\closedball{\l 1\omegadomain X}}$
in $\l 1\omegadomain X$ for every $\beta>\alpha$ if and only if
the space $\l{\infty}\omegadomain{X'}$ is normal with respect to
$\parenth{\directsum\bottompolar C,\allconstants,\alpha\oneconstants,\closedball{\l{\infty}\omegadomain{X'}}}$
in $\l{\infty}\omegadomain{X'}$.
\end{enumerate}
\noindent If, in addition, $\abs{\Omega}<\infty$ and $1\leq p,q\leq\infty$,
with $p^{-1}+q^{-1}=1$, then:
\begin{enumerate}
\item [\listitemhack{3}]The space $\l p\omegadomain X$ is normal with respect to $\parenth{\directsum C,\allconstants,\alpha\oneconstants,\closedball{\l p\omegadomain X}}$
in $\l p\omegadomain X$ if and only if the space $\l q\omegadomain{X'}$
is conormal with respect to $\parenth{\directsum\bottompolar C,\zerosummables,\onesummables,\alpha\closedball{\l q\omegadomain{X'}}}$
in $\l q\omegadomain{X'}$.
\item [\listitemhack{4}]The space $\l p\omegadomain X$ is conormal with respect to
$\parenth{\directsum C,\zerosummables,\onesummables,\beta\closedball{\l p\omegadomain X}}$
in $\l p\omegadomain X$ for every $\beta>\alpha$ if and only if
the space $\l q\omegadomain{X'}$ is normal with respect to $\parenth{\directsum\bottompolar C,\allconstants,\alpha\oneconstants,\closedball{\l q\omegadomain{X'}}}$
in $\l q\omegadomain{X'}$.
\end{enumerate}
\end{thm}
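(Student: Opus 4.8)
The plan is to obtain each of the four equivalences as a direct instance of the General Duality Theorem for normality and conormality (Theorem~\ref{thm:Normality-Duality}), once the right dual pair and the four sets $(C,D,B_1,B_2)$ have been identified and their one-sided polars read off from Lemma~\ref{lem:Polars-of-sums-and-constants}. For parts (1) and (3) I would work in the dual pair $\parenth{\conv\omegadomain X,\l 1\omegadomain{X'}}$ (respectively $\parenth{\l p\omegadomain X,\l q\omegadomain{X'}}$ with $\abs\Omega<\infty$), taking $C:=\directsum C$, $D:=\allconstants$, $B_1:=\alpha\oneconstants$ and $B_2:=\closedball{\conv\omegadomain X}$ (respectively $\closedball{\l p\omegadomain X}$); Lemma~\ref{lem:Polars-of-sums-and-constants} then gives $\parenth{\bottompolar C,\bottompolar D,\bottompolar{B_1},\bottompolar{B_2}}=\parenth{\directsum\bottompolar C,\zerosummables,\alpha^{-1}\onesummables,\closedball{\l 1\omegadomain{X'}}}$ (respectively with $\l q\omegadomain{X'}$). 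For parts (2) and (4) I would take $C:=\directsum C$, $D:=\zerosummables$, $B_1:=\onesummables$ and $B_2:=\alpha\closedball{\l 1\omegadomain X}$ (respectively $\alpha\closedball{\l p\omegadomain X}$), so that $\lambda B_2$ runs through $\beta\closedball{\cdot}$ with $\beta>\alpha$ as $\lambda$ runs through $(1,\infty)$ -- which is exactly statement~(ii) of Theorem~\ref{thm:Normality-Duality}(2) -- while Lemma~\ref{lem:Polars-of-sums-and-constants} identifies statement~(iii) with the normality of the dual space asserted in the theorem. Throughout, Remark~\ref{rem:constant-passing-remark-in-normality-additivity} is used to slide the factor $\alpha$ between $B_1$ and $B_2$ so that the two statements take the exact printed form.

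For parts (1) and (3) the only extra input is that $\bottompolar{B_2}\cap\bottompolar C+\bottompolar D$ be $\sigma(Z,Y)$-closed, which is precisely the hypothesis of Theorem~\ref{thm:Normality-Duality}(1)(b). Here $\bottompolar{B_2}$ is the closed unit ball of $\l 1\omegadomain{X'}$ (respectively $\l q\omegadomain{X'}$), the dual of $\conv\omegadomain X$ (respectively $\l p\omegadomain X$), so by the Banach--Alaoglu theorem it is $\sigma(Z,Y)$-compact; hence $\bottompolar{B_2}\cap\bottompolar C=\bottompolar{B_2}\cap\directsum\bottompolar C$ is $\sigma(Z,Y)$-compact, and adding the $\sigma(Z,Y)$-closed subspace $\bottompolar D=\zerosummables$ leaves a closed set, since a compact set plus a closed set is closed. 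Thus parts (1) and (3) are immediate.

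Parts (2) and (4) are run through Theorem~\ref{thm:Normality-Duality}(2)(b), which requires two verifications. First, $B_2\cap C+D$ must be $\sigma(Y,Z)$-cs-closed: the ball $\closedball{\l 1\omegadomain X}$ (respectively $\closedball{\l p\omegadomain X}$) is cs-compact by Lemma~\ref{lem:basic-cs-results}(5), the cone $\directsum C$ and the subspace $\zerosummables$ are $\sigma(Y,Z)$-closed and convex, hence cs-closed by Lemma~\ref{lem:basic-cs-results}(3), and then Lemma~\ref{lem:basic-cs-results}(2) and~(4) give that $\parenth{\alpha\closedball{\l 1\omegadomain X}\cap\directsum C}+\zerosummables$ (and its $\l p$-analogue) is cs-closed. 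Second, $B_1=\onesummables$ must contain a $\sigma(Y,Z)$-pre-cs-compact set $G$ with $(b-rG)\cap(B_2\cap C+D)\neq\emptyset$ for every $b\in B_1$ and $r>0$; the natural candidate is $G:=\closedball{\l 1\omegadomain X}$, intersected with $\onesummables$ when $p>1$ so as to stay inside $B_1$, which is cs-compact and contained in $\onesummables$.

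The intersection condition on $G$ is what I expect to be the real obstacle, because it is genuinely not automatic -- for instance it fails when every $C_\omega=\curly{0}$ -- but it does hold as soon as one knows $B_1\subseteq\closure{B_2\cap C+D}$: since $B_2\cap C+D$ is convex its weak closure equals its norm closure, so each $b\in B_1$ is a norm-limit of points $a\in B_2\cap C+D$, and for $\norm{b-a}$ small enough $g:=r^{-1}(b-a)$ lies in $G$ while $b-rg=a\in B_2\cap C+D$. Now $B_1\subseteq\closure{B_2\cap C+D}$ is exactly what the normality side delivers, via Lemma~\ref{lem:elementary-normality-polars}(5). Hence I would prove the equivalence in parts (2) and (4) asymmetrically: the ``conormality $\Rightarrow$ normality'' implication is the unconditional implication (ii)$\Rightarrow$(iii) of Theorem~\ref{thm:Normality-Duality}(2)(a), while for ``normality $\Rightarrow$ conormality'' one first extracts $B_1\subseteq\closure{B_2\cap C+D}$ from the normality hypothesis, which legitimises the choice of $G$ above and therefore the application of Theorem~\ref{thm:Normality-Duality}(2)(b), yielding conormality with respect to $\beta\closedball{\cdot}$ for every $\beta>\alpha$.
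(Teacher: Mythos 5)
Your proposal is correct and follows essentially the same route as the paper: the same dual pairs and choices of $(C,D,B_{1},B_{2})$, Lemma~\ref{lem:Polars-of-sums-and-constants} for the polars, Banach--Alaoglu to get the closedness hypothesis of Theorem~\ref{thm:Normality-Duality}(1)(b) for parts (1) and (3), and for parts (2) and (4) the same asymmetric argument via Theorem~\ref{thm:Normality-Duality}(2)(a) and (2)(b), with the intersection condition on $G=\closedball{\l 1\omegadomain X}$ extracted from the normality hypothesis through Lemma~\ref{lem:elementary-normality-polars} and the coincidence of norm- and weak closures of convex sets. Your explicit note that the $G$-condition is not automatic but must be derived from $B_{1}\subseteq\closure{B_{2}\cap C+D}$ is exactly the point the paper makes (and your intersection of $G$ with $\onesummables$ is redundant, since $\closedball{\l 1\omegadomain X}\subseteq\onesummables$ already).
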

\begin{proof}
We prove (1). By Lemma \ref{lem:Polars-of-sums-and-constants}, the
one-sided polars of the sets $\allconstants$, $\oneconstants$, $\directsum C$,
and $\closedball{\conv\omegadomain X}$ in $\conv\omegadomain X$
respectively are $\zerosummables$, $\onesummables$, $\directsum\bottompolar C$
and $\closedball{\l 1\omegadomain{X'}}$ in $\l 1\omegadomain{X'}$.
Since $\alpha\closedball{\l 1\omegadomain{X'}}$ is $\sigma\parenth{\l 1\omegadomain{X'},\conv\omegadomain X}$-compact
(by The Banach-Alaoglu Theorem \cite[Theorem V.3.1]{Conway}) and
the sets $\directsum\bottompolar C$ and $\zerosummables$ are 
		\IfStrEqCase{\docclass}{%
			{amsart}{$\sigma\parenth{\l 1\omegadomain{X'},\conv\omegadomain X}$-closed, }%
			{elsart}{$\sigma\parenth{\l 1\omegadomain{X'},\\\conv\omegadomain X}$-closed, }%
		}
$\alpha\closedball{\l 1\omegadomain{X'}}\cap\directsum\bottompolar C+\zerosummables$
is $\sigma\parenth{\l 1\omegadomain{X'},\conv\omegadomain X}$-closed.
The result now follows from Theorem \ref{thm:Normality-Duality}(1)(b). 

We prove (2). By Lemma \ref{lem:Polars-of-sums-and-constants}, the
one-sided polars of the sets $\zerosummables$, $\onesummables$,
$\directsum C$ and $\closedball{\l 1\omegadomain X}$ in $\l 1\omegadomain X$
respectively are $\allconstants$, $\oneconstants$, $\directsum\bottompolar C$
and $\closedball{\l{\infty}\omegadomain{X'}}$ in $\l{\infty}\omegadomain{X'}$.
By Theorem \ref{thm:Normality-Duality}(2)(a), if $\l 1\omegadomain X$
is conormal with respect to $\parenth{\directsum C,\zerosummables,\onesummables,\beta\closedball{\l 1\omegadomain X}}$
in $\l 1\omegadomain X$ for all $\beta>\alpha$, then the space $\l{\infty}\omegadomain{X'}$
is normal with respect to $\parenth{\directsum\bottompolar C,\allconstants,\alpha\oneconstants,\closedball{\l{\infty}\omegadomain{X'}}}$
in $\l{\infty}\omegadomain{X'}$.

		\IfStrEqCase{\docclass}{%
			{amsart}{%
				Conversely, let $\l{\infty}\omegadomain{X'}$ be normal with respect
				to 
				$\parenth{\directsum\bottompolar C,\allconstants,\alpha\oneconstants,\closedball{\l{\infty}\omegadomain{X'}}}$ 
				in $\l{\infty}\omegadomain{X'}$. Invoking Lemma \ref{lem:basic-cs-results},
				it can be seen that $\alpha\closedball{\l 1\omegadomain X}\cap\directsum C+\zerosummables$
				is $\sigma\parenth{\l 1\omegadomain X,\l{\infty}\omegadomain{X'}}$-cs-closed.
				Furthermore, the $\sigma\parenth{\l 1\omegadomain X,\l{\infty}\omegadomain{X'}}$-cs-compact
				set $\closedball{\l 1\omegadomain X}$ is contained in $\onesummables$.
				By Lemma \ref{lem:elementary-normality-polars}, 
				\[
				\onesummables\subseteq\closure[\sigma\parenth{\l 1\omegadomain X,\l{\infty}\omegadomain{X'}}]{\parenth{\alpha\closedball{\l 1\omegadomain X}\cap\directsum C+\zerosummables}},
				\]
				and since the norm-closure and $\sigma\parenth{\l 1\omegadomain X,\l{\infty}\omegadomain{X'}}$-closure
				of $\alpha\closedball{\l 1\omegadomain X}\cap\directsum C+\zerosummables$
				coincide (cf. \cite[Theorem V.1.4]{Conway}), it holds that, for every
				$r>0$ and $\xi\in\onesummables$, $\parenth{\xi-r\closedball{\l 1\omegadomain X}}\cap\parenth{\alpha\closedball{\l 1\omegadomain X}\cap\directsum C+\zerosummables}\neq\emptyset$.
				Finally, by Theorem \ref{thm:Normality-Duality}(2)(b), $\l 1\omegadomain X$
				is conormal with respect to $\parenth{\directsum C,\zerosummables,\onesummables,\beta\closedball{\l 1\omegadomain X}}$
				in $\l 1\omegadomain X$ for every $\beta>\alpha$.
			}%
			{elsart}{%
				Conversely, we let the space $\l{\infty}\omegadomain{X'}$ be normal with respect
				to 
				$\parenth{\directsum\bottompolar C,\allconstants,\\\alpha\oneconstants,\closedball{\l{\infty}\omegadomain{X'}}}$ 
				in $\l{\infty}\omegadomain{X'}$. Invoking Lemma \ref{lem:basic-cs-results},
				it can be seen that the set $\alpha\closedball{\l 1\omegadomain X}\cap\directsum C+\zerosummables$
				is $\sigma\parenth{\l 1\omegadomain X,\l{\infty}\omegadomain{X'}}$-cs-closed.
				Furthermore, the $\sigma\parenth{\l 1\omegadomain X,\l{\infty}\omegadomain{X'}}$-cs-compact
				set $\closedball{\l 1\omegadomain X}$ is contained in $\onesummables$.
				By Lemma~\ref{lem:elementary-normality-polars}, 
				\[
				\onesummables\subseteq\closure[\sigma\parenth{\l 1\omegadomain X,\l{\infty}\omegadomain{X'}}]{\parenth{\alpha\closedball{\l 1\omegadomain X}\cap\directsum C+\zerosummables}},
				\]
				and since the norm-closure and $\sigma\parenth{\l 1\omegadomain X,\l{\infty}\omegadomain{X'}}$-closure
				of $\alpha\closedball{\l 1\omegadomain X}\cap\directsum C+\zerosummables$
				coincide (cf. \cite[Theorem V.1.4]{Conway}), it holds that, for every
				$r>0$ and $\xi\in\onesummables$, $\parenth{\xi-r\closedball{\l 1\omegadomain X}}\cap\parenth{\alpha\closedball{\l 1\omegadomain X}\cap\directsum C+\zerosummables}\neq\emptyset$.
				Finally, by Theorem \ref{thm:Normality-Duality}(2)(b), $\l 1\omegadomain X$
				is conormal with respect to $\parenth{\directsum C,\zerosummables,\onesummables,\beta\closedball{\l 1\omegadomain X}}$
				in $\l 1\omegadomain X$ for every $\beta>\alpha$.
			}%
		}%

The assertions (3) and (4) follow similarly:

We prove (3). By Lemma \ref{lem:Polars-of-sums-and-constants}, the
one-sided polars of the sets $\allconstants$, $\oneconstants$, $\directsum C$,
and $\closedball{\l p\omegadomain X}$ in $\l p\omegadomain X$ respectively
are $\zerosummables$, $\onesummables$, $\directsum\bottompolar C$
and $\closedball{\l q\omegadomain{X'}}$ in $\l q\omegadomain X$.
As in (1), $\alpha\closedball{\l q\omegadomain{X'}}\cap\directsum\bottompolar C+\zerosummables$
is $\sigma\parenth{\l q\omegadomain{X'},\l p\omegadomain X}$-closed.
The result now follows from Theorem \ref{thm:Normality-Duality}(1)(b). 

We prove (4). By Theorem \ref{thm:Normality-Duality}(2)(a), if $\l p\omegadomain X$
is conormal with respect to $\parenth{\directsum C,\zerosummables,\onesummables,\beta\closedball{\l p\omegadomain X}}$
in $\l p\omegadomain X$ for all $\beta>\alpha$, then $\l q\omegadomain{X'}$
is normal with respect to $\parenth{\directsum\bottompolar C,\allconstants,\alpha\oneconstants,\closedball{\l q\omegadomain{X'}}}$
in $\l q\omegadomain{X'}$.

		\IfStrEqCase{\docclass}{%
			{amsart}{%
				Conversely, let $\l q\omegadomain{X'}$ be normal with respect to
				$\parenth{\directsum\bottompolar C,\allconstants,\alpha\oneconstants,\closedball{\l q\omegadomain{X'}}}$
				in $\l q\omegadomain{X'}$. Invoking Lemma \ref{lem:basic-cs-results},
				it can be seen that $\alpha\closedball{\l p\omegadomain X}\cap\directsum C+\zerosummables$
				is $\sigma\parenth{\l p\omegadomain X,\l q\omegadomain{X'}}$-cs-closed.
				Also, since $\abs{\Omega}$ is finite, by the H\"older-- and Minkowski
				inequalities, the $\norm{\cdot}_{1}$-- and $\norm{\cdot}_{p}$-norms
				on $\l p\omegadomain X$ are equivalent. By Lemma \ref{lem:basic-cs-results},
				$\closedball{\l 1\omegadomain X}$ is a $\sigma\parenth{\l p\omegadomain X,\l q\omegadomain{X'}}$-pre-cs-compact
				$\norm{\cdot}_{p}$-neighborhood of zero contained in $\onesummables$
				(being a subset of the cs-compact set $\gamma\closedball{\l p\omegadomain X}$
				for some $\gamma>0$). By Lemma \ref{lem:elementary-normality-polars},
				\[
				\onesummables\subseteq\closure[\sigma\parenth{\l p\omegadomain X,\l q\omegadomain{X'}}]{\parenth{\alpha\closedball{\l p\omegadomain X}\cap\directsum C+\zerosummables}},
				\]
				and since the norm-closure and $\sigma\parenth{\l p\omegadomain X,\l q\omegadomain{X'}}$-closure
				of $\alpha\closedball{\l p\omegadomain X}\cap\directsum C+\zerosummables$
				coincide (cf. \cite[Theorem V.1.4]{Conway}), it holds that, for every
				$r>0$ and $\xi\in\onesummables$, $\parenth{\xi-r\closedball{\l 1\omegadomain X}}\cap\parenth{\alpha\closedball{\l p\omegadomain X}\cap\directsum C+\zerosummables}\neq\emptyset$
				(since $\closedball{\l 1\omegadomain X}$ is a $\norm{\cdot}_{p}$-norm-neighborhood
				of zero). Finally, by Theorem \ref{thm:Normality-Duality}(2)(b),
				$\l p\omegadomain X$ is conormal with respect to $\parenth{\directsum C,\zerosummables,\onesummables,\beta\closedball{\l p\omegadomain X}}$
				in $\l p\omegadomain X$ for every $\beta>\alpha$.
			}%
			{elsart}{
				Conversely, we let the space $\l q\omegadomain{X'}$ be normal with respect to
				$\parenth{\directsum\bottompolar C,\allconstants,\\\alpha\oneconstants,\closedball{\l q\omegadomain{X'}}}$
				in $\l q\omegadomain{X'}$. Invoking Lemma \ref{lem:basic-cs-results},
				it can be seen that the set $\alpha\closedball{\l p\omegadomain X}\cap\directsum C+\zerosummables$
				is $\sigma\parenth{\l p\omegadomain X,\l q\omegadomain{X'}}$-cs-closed.
				Also, since $\abs{\Omega}$ is finite, by the H\"older-- and Minkowski
				inequalities, the $\norm{\cdot}_{1}$-- and $\norm{\cdot}_{p}$-norms
				on $\l p\omegadomain X$ are equivalent. By Lemma \ref{lem:basic-cs-results},
				$\closedball{\l 1\omegadomain X}$ is a $\sigma\parenth{\l p\omegadomain X,\l q\omegadomain{X'}}$-pre-cs-compact
				$\norm{\cdot}_{p}$-neighborhood of zero contained in $\onesummables$
				(being a subset of the cs-compact set $\gamma\closedball{\l p\omegadomain X}$
				for some $\gamma>0$). By Lemma \ref{lem:elementary-normality-polars},
				\[
				\onesummables\subseteq\closure[\sigma\parenth{\l p\omegadomain X,\l q\omegadomain{X'}}]{\parenth{\alpha\closedball{\l p\omegadomain X}\cap\directsum C+\zerosummables}},
				\]
				and since the norm-closure and $\sigma\parenth{\l p\omegadomain X,\l q\omegadomain{X'}}$-closure
				of $\alpha\closedball{\l p\omegadomain X}\cap\directsum C+\zerosummables$
				coincide (cf. \cite[Theorem V.1.4]{Conway}), it holds that, for every
				$r>0$ and $\xi\in\onesummables$, $\parenth{\xi-r\closedball{\l 1\omegadomain X}}\cap\parenth{\alpha\closedball{\l p\omegadomain X}\cap\directsum C+\zerosummables}\neq\emptyset$
				(since $\closedball{\l 1\omegadomain X}$ is a $\norm{\cdot}_{p}$-norm-neighborhood
				of zero). Finally, by Theorem \ref{thm:Normality-Duality}(2)(b),
				$\l p\omegadomain X$ is conormal with respect to $\parenth{\directsum C,\zerosummables,\onesummables,\beta\closedball{\l p\omegadomain X}}$
				in $\l p\omegadomain X$ for every $\beta>\alpha$.			
			}%
		}

\end{proof}
With a straightforward calculation, which we omit, the above theorem can be reformulated into the following corollary.
\begin{cor}
\label{cor:general-banach-space-normality-duality-interpreted}Let
$X$ be a real Banach space and $\{C_{\omega}\}_{\omega\in\Omega}$
a collection of closed cones in $X$ and $\alpha\geq1$.
\begin{enumerate}
\item The following are equivalent:

\begin{enumerate}
\item If $\xi\in\conv\omegadomain X$ and $x\in\bigcap_{\omega\in\Omega}\parenth{\xi_{\omega}+C_{\omega}}$,
then $\norm x\leq\alpha\norm{\xi}_{\infty}$.
\item For every $\phi\in X'$, there exists an element $\eta\in\directsum\bottompolar C\subseteq\l 1\omegadomain{X'}$
such that $\phi=\Sigma\eta$ and $\norm{\eta}_{1}\leq\alpha\norm{\phi}$.
\end{enumerate}
\item The following are equivalent:

\begin{enumerate}
\item For any $x\in X$ and $\beta>\alpha$, there exists an element $\xi\in\directsum C\subseteq\l 1\omegadomain X$
such that $x=\Sigma\xi$ and $\norm{\xi}_{1}\leq\beta\norm x$.
\item If $\eta\in\l{\infty}\omegadomain{X'}$ and $\phi\in\bigcap_{\omega\in\Omega}\parenth{\eta+\bottompolar{C_{\omega}}}$,
then $\norm{\phi}\leq\alpha\norm{\eta}_{\infty}$.
\end{enumerate}
\end{enumerate}
\noindent If, in addition, $\abs{\Omega}<\infty$ and $1\leq p,q\leq\infty$,
with $p^{-1}+q^{-1}=1$, then:
\begin{enumerate}
\item [\listitemhack{3}]The following are equivalent:

\begin{enumerate}
\item If $\xi\in\l p\omegadomain X$ and $x\in\bigcap_{\omega\in\Omega}\parenth{\xi_{\omega}+C_{\omega}}$,
then $\norm x\leq\alpha\norm{\xi}_{p}$.
\item For every $\phi\in X'$, there exists an element $\eta\in\directsum\bottompolar C\subseteq\l q\omegadomain{X'}$
such that $\phi=\Sigma\eta$ and $\norm{\eta}_{q}\leq\alpha\norm{\phi}$.
\end{enumerate}
\item [\listitemhack{4}]The following are equivalent:

\begin{enumerate}
\item For every $x\in X$ and $\beta>\alpha$, there exists an element $\xi\in\directsum C\subseteq\l p\omegadomain X$
such that $x=\Sigma\xi$ and $\norm{\xi}_{p}\leq\beta\norm x$.
\item If $\eta\in\l q\omegadomain{X'}$ and $\phi\in\bigcap_{\omega\in\Omega}\parenth{\eta+\bottompolar{C_{\omega}}}$,
then $\norm{\phi}\leq\alpha\norm{\eta}_{q}$.
\end{enumerate}
\end{enumerate}
\end{cor}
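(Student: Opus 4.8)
The plan is to translate, item by item, the abstract normality and conormality conditions in Theorem~\ref{thm:general-banach-space-normality-duality} into the concrete norm inequalities (a) and (b) above; once this dictionary is in place, the asserted equivalences are precisely the equivalences already established in Theorem~\ref{thm:general-banach-space-normality-duality}. This is the routine reformulation alluded to immediately before the corollary.

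First I would record concrete descriptions of the sets appearing in Theorem~\ref{thm:general-banach-space-normality-duality}. For $Y$ equal to $\conv\omegadomain X$, or to $\l p\omegadomain X$ when $\abs{\Omega}<\infty$: the set $\allconstants$ in $Y$ is exactly $\set{\chi_{\Omega}\otimes x}{x\in X}$, with $\constmap\parenth{\chi_{\Omega}\otimes x}=x$, so $\oneconstants=\set{\chi_{\Omega}\otimes x}{\norm x\leq1}$; the set $\directsum C$ in $Y$ is $\set{\xi\in Y}{\xi_{\omega}\in C_{\omega}\ \text{for all }\omega}$; the set $\zerosummables$ in $Y$ is $Y\cap\ker\Sigma$; and $\Sigma$ maps $\onesummables$ onto $\closedball X$, surjectivity holding via $x\mapsto\delta_{\omega_{0}}\otimes x$ for a fixed $\omega_{0}\in\Omega$. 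The analogous descriptions hold in $\l 1\omegadomain{X'}$, $\l{\infty}\omegadomain{X'}$ and $\l q\omegadomain{X'}$ with $\bottompolar{C_{\omega}}$ in place of $C_{\omega}$, using that each $\bottompolar{C_{\omega}}$ is a cone by Lemma~\ref{lem:elementary-polar-results}(7); the one-sided polar identifications underpinning all of this are collected in Lemma~\ref{lem:Polars-of-sums-and-constants}.

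Next I would unravel the normality conditions. For item~(1), ``$\conv\omegadomain X$ is normal with respect to $\parenth{\directsum C,\allconstants,\alpha\oneconstants,\closedball{\conv\omegadomain X}}$'' means, by Definition~\ref{def:Normality-Conormality-Additivity-Coadditivity}, that $\parenth{\closedball{\conv\omegadomain X}+\directsum C}\cap\allconstants\subseteq\alpha\oneconstants$. An element of the left-hand side is a constant function $\chi_{\Omega}\otimes x$ admitting a decomposition $\chi_{\Omega}\otimes x=\xi+\zeta$ with $\norm{\xi}_{\infty}\leq1$ and $\zeta_{\omega}\in C_{\omega}$ for every $\omega$, equivalently with $x\in\bigcap_{\omega\in\Omega}\parenth{\xi_{\omega}+C_{\omega}}$ and $\norm{\xi}_{\infty}\leq1$; its membership in the right-hand side says exactly $\norm x\leq\alpha$. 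Positive homogeneity of the cones $C_{\omega}$, of $\closedball{\conv\omegadomain X}$, and of $\Sigma$ and $\constmap$ then upgrades the implication ``$\norm{\xi}_{\infty}\leq1\Rightarrow\norm x\leq\alpha$'' to the scale-invariant statement~(1)(a); in the degenerate case $\xi=0$ one notes directly that the inclusion forces $\bigcap_{\omega\in\Omega}C_{\omega}=\curly 0$ --- apply it to $\chi_{\Omega}\otimes(\lambda y)$, $\lambda\geq0$, for $y\in\bigcap_{\omega\in\Omega}C_{\omega}$ --- so~(1)(a) holds there as well. The same computation carried out in $\l{\infty}\omegadomain{X'}$ (using the description of $\allconstants$ there) shows that the normality conditions in items~(2),~(3) and~(4) are equivalent to~(2)(b),~(3)(b) and~(4)(b) respectively.

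Finally I would unravel the conormality conditions. For item~(1), ``$\l 1\omegadomain{X'}$ is conormal with respect to $\parenth{\directsum\bottompolar C,\zerosummables,\onesummables,\alpha\closedball{\l 1\omegadomain{X'}}}$'' means $\onesummables\subseteq\parenth{\alpha\closedball{\l 1\omegadomain{X'}}\cap\directsum\bottompolar C}+\zerosummables$. Applying $\Sigma$ and using $\Sigma\parenth{\zerosummables}=\curly 0$, this says precisely that every $\phi\in\closedball{X'}$ --- each such $\phi$ being $\Sigma\eta$ for some $\eta\in\onesummables$, e.g.\ $\eta=\delta_{\omega_{0}}\otimes\phi$ --- can be written $\phi=\Sigma\zeta$ with $\zeta\in\directsum\bottompolar C$ and $\norm{\zeta}_{1}\leq\alpha$; positive homogeneity makes this statement~(1)(b). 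Conversely, given~(1)(b), for $\eta\in\onesummables$ one picks $\zeta\in\directsum\bottompolar C$ with $\Sigma\zeta=\Sigma\eta$ and $\norm{\zeta}_{1}\leq\alpha$; then $\eta-\zeta\in\zerosummables$, so $\eta$ lies in the right-hand side. Thus the two statements in item~(1) of the corollary are exactly the two abstract properties in Theorem~\ref{thm:general-banach-space-normality-duality}(1), which that theorem proves equivalent. Items~(2),~(3) and~(4) follow in the same way from the corresponding parts of Theorem~\ref{thm:general-banach-space-normality-duality}; for items~(2) and~(4) one additionally matches the quantifier ``for every $\beta>\alpha$'' in the abstract conormality condition with the ``$\beta>\alpha$'' appearing in~(2)(a) and~(4)(a), which is exactly the interchange of the constant between $B_{1}$ and $B_{2}$ recorded in Remark~\ref{rem:constant-passing-remark-in-normality-additivity}. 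There is no serious obstacle here: the only places requiring attention are these homogeneity rescalings and the observation that $\Sigma$ (respectively $\constmap$) maps the relevant ``ball'' set onto all of $\closedball X$ (respectively onto all constant functions), which is what guarantees that the abstract inclusions, quantified a priori over all of $\onesummables$ or $\allconstants$, really do encode the full concrete statements.
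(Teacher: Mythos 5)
Your proposal is correct and follows exactly the route the paper intends: the paper states that the corollary is obtained from Theorem~\ref{thm:general-banach-space-normality-duality} by ``a straightforward calculation, which we omit,'' and your argument supplies precisely that omitted dictionary (identifying $\allconstants$, $\oneconstants$, $\directsum C$, $\zerosummables$, $\onesummables$ concretely, unwinding the inclusions, and rescaling by homogeneity, including the degenerate case $\xi=0$). Nothing further is needed.
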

A similar argument as was employed in Theorem \ref{thm:general-banach-space-normality-duality}
will also establish the following theorem.
																\IfStrEqCase{\docclass}{{amsart}{\vfill\pagebreak}}
\begin{thm}
\label{thm:general-banach-space-additivity-duality}Let $\alpha\geq1$.
Let $X$ be a real Banach space and $\{C_{\omega}\}_{\omega\in\Omega}$
a collection of closed cones in $X$. 
\begin{enumerate}
\item The space $\l 1\omegadomain X$ is additive with respect to $\parenth{\directsum C,\{0\},\alpha\closedball{\l 1\omegadomain X},\onesummables}$
in $\l 1\omegadomain X$ if and only if the space $\l{\infty}\omegadomain{X'}$
is coadditive with respect to $\parenth{\directsum\bottompolar C,\l{\infty}\omegadomain{X'},\closedball{\l{\infty}\omegadomain{X'}},\alpha\oneconstants}$
in $\l{\infty}\omegadomain{X'}$.
		\IfStrEqCase{\docclass}{%
			{amsart}{%
				\item The space $\conv\omegadomain X$ is coadditive with respect to $\parenth{\directsum C,\conv\omegadomain X,\closedball{\conv\omegadomain X},\beta\oneconstants}$
				in $\conv\omegadomain X$ for every $\beta>\alpha$ if and only if
				the space $\l 1\omegadomain{X'}$ is additive with respect to $\parenth{\directsum\bottompolar C,\{0\},\alpha\closedball{\l 1\omegadomain{X'}},\onesummables}$
				in $\l 1\omegadomain{X'}$.
			}%
			{elsart}{%
				\item The space $\conv\omegadomain X$ is coadditive with respect to $\parenth{\directsum C,\ \conv\omegadomain X,\ \closedball{\conv\omegadomain X},\\\beta\oneconstants}$
				in $\conv\omegadomain X$ for every $\beta>\alpha$ if and only if
				the space $\l 1\omegadomain{X'}$ is additive with respect to $\parenth{\directsum\bottompolar C,\{0\},\alpha\closedball{\l 1\omegadomain{X'}},\onesummables}$
				in $\l 1\omegadomain{X'}$.
			}%
		}%
\end{enumerate}
\noindent If, in addition, $\abs{\Omega}<\infty$ and $1\leq p,q\leq\infty$,
with $p^{-1}+q^{-1}=1$, then:
\begin{enumerate}
\item [\listitemhack{3}]	The space $\l p\omegadomain X$ is additive with respect to
$\parenth{\directsum C,\{0\},\alpha\closedball{\l p\omegadomain X},\onesummables}$
in $\l p\omegadomain X$ if and only if the space $\l q\omegadomain{X'}$
is coadditive with respect to $\parenth{\directsum\bottompolar C,\l q\omegadomain{X'},\closedball{\l q\omegadomain{X'}},\alpha\oneconstants}$
in $\l q\omegadomain{X'}$.
\item [\listitemhack{4}]The space $\l p\omegadomain X$ is coadditive with respect to
$\parenth{\directsum C,\l p\omegadomain X,\closedball{\l p\omegadomain X},\\ \beta\oneconstants}$
in $\l p\omegadomain X$ for all $\beta>\alpha$ if and only if the
space $\l q\omegadomain{X'}$ is additive with respect to $\parenth{\directsum\bottompolar C,\{0\},\alpha\closedball{\l q\omegadomain{X'}},\onesummables}$
in $\l q\omegadomain{X'}$.
\end{enumerate}
\end{thm}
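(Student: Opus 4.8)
The plan is to run, almost verbatim, the argument used for Theorem~\ref{thm:general-banach-space-normality-duality}, with every invocation of Theorem~\ref{thm:Normality-Duality} replaced by the corresponding one of Theorem~\ref{thm:Additivity-Duality} and with the required one-sided polars supplied by Lemma~\ref{lem:Polars-of-sums-and-constants}. Throughout, the constant $\alpha$ is shuffled between the third and fourth slots of a quadruple $(C,D,B_{1},B_{2})$ using Remark~\ref{rem:constant-passing-remark-in-normality-additivity} (applied twice it transfers any positive scalar freely between $B_{1}$ and $B_{2}$), and I shall suppress this bookkeeping.

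\textbf{Parts (1) and (3).} Here I would apply Theorem~\ref{thm:Additivity-Duality}(1) in the dual pair $\parenth{\l 1\omegadomain X,\l{\infty}\omegadomain{X'}}$ (part (1)) or $\parenth{\l p\omegadomain X,\l q\omegadomain{X'}}$ (part (3)), taking $C:=\directsum C$, $D:=\curly 0$, $B_{1}:=\alpha\closedball{\l p\omegadomain X}$ and $B_{2}:=\onesummables$ (read $p=1$ in part (1)). Since absolutely convergent series converge in a Banach space, and since $\Omega$ is finite in part (3), the summation operator $\Sigma$ is bounded on the relevant space, so $\onesummables=\Sigma^{-1}\parenth{\closedball X}$ is norm-closed convex --- hence $\sigma$-closed --- and $\directsum C$ is $\sigma$-closed as an intersection of coordinate conditions; thus the standing hypotheses of Theorem~\ref{thm:Additivity-Duality} hold. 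By Lemma~\ref{lem:Polars-of-sums-and-constants} the one-sided polars of $C$, $D$, $B_{1}$, $B_{2}$ are, respectively, $\directsum\bottompolar C$, the whole dual space (the polar of $\curly 0$), $\alpha^{-1}\closedball{\l q\omegadomain{X'}}$ (by Lemma~\ref{lem:elementary-polar-results}(3)) and $\oneconstants$. The point on which everything turns is that $\bottompolar{B_{2}}+\bottompolar C=\oneconstants+\directsum\bottompolar C$ is weak{*}-closed: indeed $\oneconstants$ is the image of the weak{*}-compact ball $\closedball{X'}$ (Banach--Alaoglu) under the weak{*}-continuous embedding $x\mapsto\chi_{\Omega}\otimes x$ (whose pre-adjoint is $\Sigma$), hence weak{*}-compact, while $\directsum\bottompolar C$ is weak{*}-closed, and a weak{*}-compact set plus a weak{*}-closed set is weak{*}-closed. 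Theorem~\ref{thm:Additivity-Duality}(1)(b) now yields the asserted equivalence.

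\textbf{Parts (2) and (4).} Here I would apply Theorem~\ref{thm:Additivity-Duality}(2) in the dual pair $\parenth{\conv\omegadomain X,\l 1\omegadomain{X'}}$ (part (2)) or $\parenth{\l p\omegadomain X,\l q\omegadomain{X'}}$ (part (4)), taking $C:=\directsum C$, $D:=Y$ (the full space $\conv\omegadomain X$, resp.\ $\l p\omegadomain X$), $B_{1}:=\alpha^{-1}\closedball Y$ and $B_{2}:=\oneconstants$. By Lemma~\ref{lem:Polars-of-sums-and-constants} the polars of $C$, $D$, $B_{1}$, $B_{2}$ are $\directsum\bottompolar C$, $\curly 0$ (the polar of the full space), $\alpha\closedball{\l 1\omegadomain{X'}}$ (resp.\ $\alpha\closedball{\l q\omegadomain{X'}}$) and $\onesummables$; so statement~(iii) of Theorem~\ref{thm:Additivity-Duality}(2) is exactly the additivity assertion on the dual, and, after applying Remark~\ref{rem:constant-passing-remark-in-normality-additivity} and substituting $\beta=\lambda\alpha$, statement~(ii) is exactly the ``for every $\beta>\alpha$'' coadditivity assertion on $Y$. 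The implication ``coadditivity for all $\beta>\alpha$ $\Longrightarrow$ additivity of the dual'' is Theorem~\ref{thm:Additivity-Duality}(2)(a). For the converse I would check the hypotheses of Theorem~\ref{thm:Additivity-Duality}(2)(b): the set $B_{2}+C=\oneconstants+\directsum C$ is $\sigma$-cs-closed by Lemma~\ref{lem:basic-cs-results}(4), because $\oneconstants\cong\closedball X$ is cs-compact ($X$ being Banach) and $\directsum C$ is cs-closed (each $C_{\omega}$ is closed convex, hence cs-closed, and coordinate evaluations are continuous); and, \emph{assuming} the additivity of the dual, Lemma~\ref{lem:elementary-normality-polars}(6) gives $B_{1}\subseteq\closure{\parenth{B_{2}+C}}$, whence the pre-cs-compact set $G:=B_{1}$, being a norm-neighbourhood of $0$, satisfies $\parenth{b-rG}\cap\parenth{B_{2}+C}\neq\emptyset$ for all $b\in B_{1}$ and $r>0$ (here one uses that the norm- and $\sigma$-closures of the convex set $B_{2}+C$ coincide). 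Theorem~\ref{thm:Additivity-Duality}(2)(b) then gives (ii)$\Leftrightarrow$(iii), so together with the assumed (iii) it yields (ii). In part (4) the sole change is that $\l p\omegadomain X$ is Banach because $\Omega$ is finite, which is what makes $\closedball{\l p\omegadomain X}$ cs-compact.

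\textbf{Main obstacle.} Exactly as for Theorem~\ref{thm:general-banach-space-normality-duality}, the only non-mechanical step is the verification, in parts (2) and (4), of the hypotheses of Theorem~\ref{thm:Additivity-Duality}(2)(b) --- that $\oneconstants+\directsum C$ is cs-closed and that $B_{1}$ contains a pre-cs-compact set with the required intersection property --- which is precisely where Lemma~\ref{lem:basic-cs-results} and the coincidence of norm- and weak-closures of convex sets are needed; everything else (computing polars, the closedness/compactness arguments in parts (1) and (3), and the rescaling of $\alpha$) is routine.
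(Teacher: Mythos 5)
Your proposal is correct and follows essentially the same route as the paper: parts (1) and (3) via Theorem~\ref{thm:Additivity-Duality}(1)(b) after observing that $\oneconstants$ is weak{*}-compact (so that $\oneconstants+\directsum\bottompolar C$ is weak{*}-closed), and parts (2) and (4) via Theorem~\ref{thm:Additivity-Duality}(2)(a) and (2)(b) with $G$ the closed unit ball and the cs-closedness of $\oneconstants+\directsum C$ supplied by Lemma~\ref{lem:basic-cs-results}. The only differences are cosmetic (e.g., you establish weak{*}-compactness of $\oneconstants$ as a continuous image of $\closedball{X'}$ rather than as a weak{*}-closed subset of the ball, and you justify cs-closedness cleanly via Lemma~\ref{lem:basic-cs-results}(4)).
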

\begin{proof}
We prove (1). By Lemma \ref{lem:Polars-of-sums-and-constants}, the
one-sided polars of the sets $\onesummables$, $\directsum C$ and
$\closedball{\l 1\omegadomain{X'}}$ in $\l 1\omegadomain X$ respectively
equal $\oneconstants$, $\directsum\bottompolar C$, and $\closedball{\l{\infty}\omegadomain X}$
in $\l{\infty}\omegadomain{X'}$. We notice that $\alpha\oneconstants$
in $\l{\infty}\omegadomain{X'}$ is $\sigma\parenth{\l{\infty}\omegadomain{X'},\l 1\omegadomain X}$-compact,
since it is $\sigma\parenth{\l{\infty}\omegadomain{X'},\l 1\omegadomain X}$-closed
in $\closedball{\l{\infty}\omegadomain{X'}}$, and hence, the set
$\alpha\oneconstants+\directsum\bottompolar C$ is $\sigma\parenth{\l{\infty}\omegadomain{X'},\l 1\omegadomain X}$-closed.
By Theorem \ref{thm:Additivity-Duality}(1)(b) the result follows.

We prove (2). By Lemma \ref{lem:Polars-of-sums-and-constants}, the
one-sided polars of the sets $\oneconstants$, $\directsum C$, and
$\closedball{\conv\omegadomain X}$ in $\conv\omegadomain X$ respectively
are $\onesummables$, $\directsum\bottompolar C$ and $\closedball{\l 1\omegadomain{X'}}$
in $\l 1\omegadomain{X'}$. From Theorem \ref{thm:Additivity-Duality}(2)(a)
we immediately conclude that, if $\conv\omegadomain X$ is coadditive
with respect to $\parenth{\directsum C,\conv\omegadomain X,\closedball{\conv\omegadomain X},\beta\oneconstants}$
in $\conv\omegadomain X$ for every $\beta>\alpha$, then $\l 1\omegadomain{X'}$
is additive with respect to $\parenth{\directsum\bottompolar C,\{0\},\alpha\closedball{\l 1\omegadomain{X'}},\onesummables}$
in $\l 1\omegadomain{X'}$.

		\IfStrEqCase{\docclass}{%
			{amsart}{%
				Conversely, let $\l 1\omegadomain{X'}$ be additive with respect to
				$\parenth{\directsum\bottompolar C,\{0\},\alpha\closedball{\l 1\omegadomain{X'}},\onesummables}$
				in $\l 1\omegadomain{X'}$. Invoking Lemma \ref{lem:basic-cs-results},
				it can be seen that the set $\alpha\oneconstants+\directsum C$ is
				$\sigma\parenth{\conv\omegadomain X,\l 1\omegadomain{X'}}$-cs-closed
				($\alpha\oneconstants+\directsum C$ is $\sigma\parenth{\conv\omegadomain X,\l 1\omegadomain{X'}}$-closed
				and convex). By Lemma \ref{lem:elementary-normality-polars}, 
				\[
				\closedball{\conv\omegadomain X}\subseteq\closure[\sigma\parenth{\conv\omegadomain X,\l 1\omegadomain{X'}}]{\parenth{\alpha\oneconstants+\directsum C}}\cap\conv\omegadomain X=\closure[\sigma\parenth{\conv\omegadomain X,\l 1\omegadomain{X'}}]{\parenth{\alpha\oneconstants+\directsum C}}.
				\]
				Since the norm-closure and $\sigma\parenth{\conv\omegadomain X,\l 1\omegadomain{X'}}$-closure
				of $\alpha\oneconstants+\directsum C$ coincide (cf. \cite[Theorem V.1.4]{Conway}),
				we have, for every $r>0$ and $b\in\closedball{\conv\omegadomain X}$,
				that $\parenth{b-r\closedball{\conv\omegadomain X}}\cap\parenth{\alpha\oneconstants+\directsum C}\neq\emptyset$.
				But $\closedball{\conv\omegadomain X}$ is a $\sigma\parenth{\conv\omegadomain X,\l 1\omegadomain{X'}}$-cs-compact
				set, so by Theorem \ref{thm:Additivity-Duality}(2)(b), $\conv\omegadomain X$
				is coadditive with respect to $\parenth{\directsum C,\conv\omegadomain X,\closedball{\conv\omegadomain X},\beta\oneconstants}$
				in $\conv\omegadomain X$ for every $\beta>\alpha$.
			}%
			{elsart}{%
				Conversely, we let the space $\l 1\omegadomain{X'}$ be additive with respect to
				$\parenth{\directsum\bottompolar C,\{0\},\\\alpha\closedball{\l 1\omegadomain{X'}},\onesummables}$
				in $\l 1\omegadomain{X'}$. Invoking Lemma \ref{lem:basic-cs-results},
				it can be seen that the set $\alpha\oneconstants+\directsum C$ is
				$\sigma\parenth{\conv\omegadomain X,\l 1\omegadomain{X'}}$-cs-closed
				(the set $\alpha\oneconstants+\directsum C$ is $\sigma\parenth{\conv\omegadomain X,\\\l 1\omegadomain{X'}}$-closed
				and convex). By Lemma \ref{lem:elementary-normality-polars}, 
				\[
				\closedball{\conv\omegadomain X}\subseteq\closure[\sigma\parenth{\conv\omegadomain X,\l 1\omegadomain{X'}}]{\parenth{\alpha\oneconstants+\directsum C}}\cap\conv\omegadomain X=\closure[\sigma\parenth{\conv\omegadomain X,\l 1\omegadomain{X'}}]{\parenth{\alpha\oneconstants+\directsum C}}.
				\]
				Since the norm-closure and $\sigma\parenth{\conv\omegadomain X,\l 1\omegadomain{X'}}$-closure
				of $\alpha\oneconstants+\directsum C$ coincide (cf. \cite[Theorem V.1.4]{Conway}),
				we have, for every $r>0$ and $b\in\closedball{\conv\omegadomain X}$,
				that $\parenth{b-r\closedball{\conv\omegadomain X}}\cap\parenth{\alpha\oneconstants+\directsum C}\neq\emptyset$.
				But $\closedball{\conv\omegadomain X}$ is a $\sigma\parenth{\conv\omegadomain X,\l 1\omegadomain{X'}}$-cs-compact
				set, so by Theorem \ref{thm:Additivity-Duality}(2)(b), $\conv\omegadomain X$
				is coadditive with respect to $\parenth{\directsum C,\conv\omegadomain X,\closedball{\conv\omegadomain X},\beta\oneconstants}$
				in $\conv\omegadomain X$ for every $\beta>\alpha$.
			}%
		}

The assertions (3) and (4) follow similarly:

We prove (3). By Lemma \ref{lem:Polars-of-sums-and-constants}, the
one-sided polars of the sets $\onesummables$, $\directsum C$ and
$\closedball{\l p\omegadomain X}$ in $\l p\omegadomain X$ respectively
equal $\oneconstants$, $\directsum\bottompolar C$, and $\closedball{\l q\omegadomain{X'}}$
in $\l q\omegadomain{X'}$. We notice $\alpha\oneconstants+\directsum\bottompolar C$
is $\sigma\parenth{\l q\omegadomain{X'},\l p\omegadomain X}$-closed.
By Theorem \ref{thm:Additivity-Duality}(1)(b) the result follows.

We prove (4). By Lemma \ref{lem:Polars-of-sums-and-constants}, the
one-sided polars of the sets $\oneconstants$, $\directsum C$, and
$\closedball{\l p\omegadomain X}$ in $\l p\omegadomain X$ respectively
are $\onesummables$, $\directsum\bottompolar C$ and $\closedball{\l q\omegadomain{X'}}$
in $\l q\omegadomain{X'}$. From Theorem \ref{thm:Additivity-Duality}(2)(a)
we immediately conclude that, if $\l p\omegadomain X$ is coadditive
with respect to $\parenth{\directsum C,\l p\omegadomain X,\closedball{\l p\omegadomain X},\beta\oneconstants}$
in $\l p\omegadomain X$ for every $\beta>\alpha$, then $\l q\omegadomain{X'}$
is additive with respect to $\parenth{\directsum\bottompolar C,\{0\},\alpha\closedball{\l q\omegadomain{X'}},\onesummables}$
in $\l q\omegadomain{X'}$.

Conversely, let $\l q\omegadomain{X'}$ be additive with respect to
		\IfStrEqCase{\docclass}{%
			{amsart}{
				$\parenth{\directsum\bottompolar C,\{0\},\alpha\closedball{\l q\omegadomain{X'}},\onesummables}$
			}%
			{elsart}{
				$\parenth{\directsum\bottompolar C,\{0\},\alpha\closedball{\l q\omegadomain{X'}},\\\onesummables}$
			}%
		}
in $\l q\omegadomain{X'}$. Invoking Lemma \ref{lem:basic-cs-results},
it can be is seen that the set $\alpha\oneconstants+\directsum C$
is $\sigma\parenth{\l p\omegadomain X,\l q\omegadomain{X'}}$-cs-closed.
By Lemma \ref{lem:elementary-normality-polars}, 
		\IfStrEqCase{\docclass}{%
			{amsart}{
				\[
				\closedball{\l p\omegadomain X}\subseteq\closure[\sigma\parenth{\l p\omegadomain X,\l q\omegadomain{X'}}]{\parenth{\alpha\oneconstants+\directsum C}}\cap\l p\omegadomain X=\closure[\sigma\parenth{\l p\omegadomain X,\l q\omegadomain{X'}}]{\parenth{\alpha\oneconstants+\directsum C}}.
				\]%
			}%
			{elsart}{
				\begin{eqnarray*}
				\closedball{\l p\omegadomain X} 	&\subseteq&			\closure[\sigma\parenth{\l p\omegadomain X,\l q\omegadomain{X'}}]{\parenth{\alpha\oneconstants+\directsum C}}\cap\l p\omegadomain X \\
													&=&				\closure[\sigma\parenth{\l p\omegadomain X,\l q\omegadomain{X'}}]{\parenth{\alpha\oneconstants+\directsum C}}.
				\end{eqnarray*}%
			}%
		}
Since the norm-closure and $\sigma\parenth{\l p\omegadomain X,\l q\omegadomain{X'}}$-closure
of $\alpha\oneconstants+\directsum C$ coincide (cf. \cite[Theorem V.1.4]{Conway}),
we have, for every $r>0$ and $b\in\closedball{\l p\omegadomain X}$,
that $\parenth{b-r\closedball{\l p\omegadomain X}}\cap\parenth{\alpha\oneconstants+\directsum C}\neq\emptyset$.
But $\closedball{\l p\omegadomain X}$ is a $\sigma\parenth{\l p\omegadomain X,\l q\omegadomain{X'}}$-cs-compact
set, so by Theorem \ref{thm:Additivity-Duality}(2)(b), $\l p\omegadomain X$
is coadditive with respect to $\parenth{\directsum C,\l p\omegadomain X,\closedball{\l p\omegadomain X},\beta\oneconstants}$
in $\l p\omegadomain X$ for every $\beta>\alpha$.
\end{proof}
Again, a straightforward calculation which we omit, allows the reformulation of the above theorem into the following corollary.
\begin{cor}
\label{cor:general-banach-space-additivity-duality-interpreted}Let
$X$ be a real Banach space and $\{C_{\omega}\}_{\omega\in\Omega}$
a collection of closed cones in $X$ and $\alpha\geq1$.
\begin{enumerate}
\item The following are equivalent:

\begin{enumerate}
\item If $\xi\in\directsum C\subseteq\l 1\omegadomain X$, then $\norm{\xi}_{1}\leq\alpha\norm{\Sigma\xi}$.
\item For every $\eta\in\l{\infty}\omegadomain{X'}$, there exists some
$\phi\in\bigcap_{\omega\in\Omega}\parenth{\eta_{\omega}-\bottompolar{C_{\omega}}}$
with $\norm{\phi}\leq\alpha\norm{\eta}_{\infty}$.
\end{enumerate}
\item The following are equivalent:

\begin{enumerate}
\item For every $\xi\in\conv\omegadomain X$ and $\beta>\alpha$, there
exists some $x\in\bigcap_{\omega\in\Omega}\parenth{\xi_{\omega}-C_{\omega}}$
with $\norm x\leq\beta\norm{\xi}_{\infty}$.
\item If $\eta\in\directsum\bottompolar C\subseteq\l 1\omegadomain{X'}$,
then $\norm{\eta}_{1}\leq\alpha\norm{\Sigma\eta}$.
\end{enumerate}
\end{enumerate}
\noindent If, in addition, $\abs{\Omega}<\infty$ and $1\leq p,q\leq\infty$,
with $p^{-1}+q^{-1}=1$, then:
\begin{enumerate}
\item [\listitemhack{3}]The following are equivalent:

\begin{enumerate}
\item If $\xi\in\directsum C\subseteq\l p\omegadomain X$, then $\norm{\xi}_{p}\leq\alpha\norm{\Sigma\xi}$.
\item For every $\eta\in\l q\omegadomain{X'}$, there exists some $\phi\in\bigcap_{\omega\in\Omega}\parenth{\eta_{\omega}-\bottompolar{C_{\omega}}}$
with $\norm{\phi}\leq\alpha\norm{\eta}_{q}$.
\end{enumerate}
\item [\listitemhack{4}]The following are equivalent:

\begin{enumerate}
\item For every $\xi\in\l p\omegadomain X$ and $\beta>\alpha$, there exists
some $x\in\bigcap_{\omega\in\Omega}\parenth{\xi_{\omega}-C_{\omega}}$
with $\norm x\leq\beta\norm{\xi}_{p}$.
\item If $\eta\in\directsum\bottompolar C\subseteq\l q\omegadomain{X'}$,
then $\norm{\eta}_{q}\leq\alpha\norm{\Sigma\eta}$.
\end{enumerate}
\end{enumerate}
\end{cor}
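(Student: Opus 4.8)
The plan is to obtain Corollary~\ref{cor:general-banach-space-additivity-duality-interpreted} from Theorem~\ref{thm:general-banach-space-additivity-duality} by a purely mechanical translation of each of its four equivalences, using the definitions of additivity and coadditivity from Definition~\ref{def:Normality-Conormality-Additivity-Coadditivity} together with the explicit descriptions of the sets $\directsum C$, $\directsum\bottompolar C$, $\onesummables$, $\oneconstants$ and the closed balls appearing in Definition~\ref{def:sets-for-applications}. In each case I would rewrite the set inclusion that defines additivity (or coadditivity) as a quantified statement about norms, and then apply a positive-homogeneity rescaling --- in the spirit of Remark~\ref{rem:constant-passing-remark-in-normality-additivity} --- to pass between the ``$\norm{\cdot}\le1$'' normalisations in the theorem and the ``$\norm{\cdot}\le\alpha\norm{x}$''-type estimates in the corollary.

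For part~(1): by Definition~\ref{def:Normality-Conormality-Additivity-Coadditivity}(2), the assertion that $\l 1\omegadomain X$ is additive with respect to $\parenth{\directsum C,\{0\},\alpha\closedball{\l 1\omegadomain X},\onesummables}$ is precisely $\onesummables\cap\directsum C\subseteq\alpha\closedball{\l 1\omegadomain X}$; since every element of $\l 1\omegadomain X$ lies in $D(\Sigma)$, this unwinds to ``$\xi_\omega\in C_\omega$ for all $\omega$ and $\norm{\Sigma\xi}\le1$ imply $\norm{\xi}_1\le\alpha$'', which is equivalent to~(1)(a) upon replacing $\xi$ by $\xi/\norm{\Sigma\xi}$ (and, when $\Sigma\xi=0$, by $\xi/t$ and letting $t\downarrow0$). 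Dually, $\l{\infty}\omegadomain{X'}$ being coadditive with respect to $\parenth{\directsum\bottompolar C,\l{\infty}\omegadomain{X'},\closedball{\l{\infty}\omegadomain{X'}},\alpha\oneconstants}$ means $\closedball{\l{\infty}\omegadomain{X'}}\subseteq\parenth{\alpha\oneconstants+\directsum\bottompolar C}\cap\l{\infty}\omegadomain{X'}$, where the intersection with $\l{\infty}\omegadomain{X'}$ is vacuous since $\alpha\oneconstants+\directsum\bottompolar C\subseteq\l{\infty}\omegadomain{X'}$ already; decomposing $\eta=\parenth{\chi_\Omega\otimes\phi}+\zeta$ with $\norm{\phi}\le\alpha$ and $\zeta_\omega\in\bottompolar{C_\omega}$ componentwise is exactly the requirement that $\phi\in\bigcap_{\omega\in\Omega}\parenth{\eta_\omega-\bottompolar{C_\omega}}$ with $\norm{\phi}\le\alpha$ (the corresponding $\zeta:=\eta-\chi_\Omega\otimes\phi$ then automatically lies in $\directsum\bottompolar C$), so rescaling $\eta$ yields~(1)(b).

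Parts~(2), (3) and~(4) proceed identically. For~(2), coadditivity of $\conv\omegadomain X$ with respect to $\parenth{\directsum C,\conv\omegadomain X,\closedball{\conv\omegadomain X},\beta\oneconstants}$ for every $\beta>\alpha$ unwinds --- with the clause $\closedball{\conv\omegadomain X}\subseteq\conv\omegadomain X$ automatic and $\beta\oneconstants+\directsum C\subseteq\conv\omegadomain X$ as before --- to the ``$\beta>\alpha$'' form of~(2)(a), and additivity of $\l 1\omegadomain{X'}$ with respect to $\parenth{\directsum\bottompolar C,\{0\},\alpha\closedball{\l 1\omegadomain{X'}},\onesummables}$ unwinds to~(2)(b). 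For~(3) and~(4) one repeats the computation with $\l 1$, $\l{\infty}$, $\conv$ replaced throughout by $\l p$, $\l q$, using that for $\abs{\Omega}<\infty$ every element of $\l p\omegadomain X$ again lies in $D(\Sigma)$ and that $\onesummables$ and $\oneconstants$ in $\l p\omegadomain X$ admit the same elementary descriptions.

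I do not expect any genuine obstacle here; the only points needing care are: checking, in each of the four cases, that the ``$\cap D$'' clause of coadditivity (resp. the ``$\subseteq D$'' clause of additivity) is automatically satisfied --- so that the inclusion truly reduces to a norm decomposition or estimate --- and keeping track of the ``for all $\beta>\alpha$'' quantifier that appears on one side of two of the equivalences while the plain constant $\alpha$ appears on the other, which is precisely the content of Remark~\ref{rem:constant-passing-remark-in-normality-additivity} together with the positive homogeneity of the cones and balls involved. The degenerate cases in which one of the relevant norms vanishes are handled in each instance by the rescaling trick noted above.
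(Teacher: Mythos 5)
Your proposal is correct and is exactly the ``straightforward calculation which we omit'' that the paper invokes: unwinding the additivity/coadditivity inclusions of Theorem \ref{thm:general-banach-space-additivity-duality} via Definitions \ref{def:Normality-Conormality-Additivity-Coadditivity} and \ref{def:sets-for-applications}, checking that the $D$-clauses are vacuous, and rescaling by positive homogeneity (with the degenerate $\Sigma\xi=0$ case handled by letting $t\downarrow0$). No gaps; this matches the paper's intended route.
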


\section{\label{sec:cstar}Application: Geometric properties of cones in C{*}-algebras}

In this section we give an elementary application of geometric duality
theory to naturally occurring cones in C{*}-algebras. 

Throughout this section $A$ will be a C{*}-algebra and $A'$ its
dual. We will view both $A$ and $A'$ as vector spaces over $\R$,
and define the (real) bilinear map $\duality{\cdot}{\cdot}:A\times A'\to\R$
by $\duality a{\phi}:=\Realpart\phi(a)\ (a\in A,\ \phi\in A')$. By
The Separation Theorem \cite[Theorem 3.21]{Rudin}, $\duality{\cdot}{\cdot}$
becomes a duality as defined in Section \ref{sub:polar-calculus}.
As usual, we define the closed cone of positive elements in $A$ by
$A_{+}:=\set{a^{*}a\in A}{a\in A}$ and the closed cone of positive
functionals in $A'$ by $A_{+}':=\set{\phi\in A'}{\phi(a^{*}a)\geq0\:\forall a\in A}$.
We stress, since $A$ is a complex space, that $A_{+}'\neq-\bottompolar{A_{+}}=\set{\phi\in A'}{\Realpart\phi(a^{*}a)\geq0,\:\forall a\in A}$.
For $a,b\in A$ and $\phi,\varphi\in A'$, by $a\leq b$ and $\phi\leq\varphi$
we respectively mean $b\in a+A_{+}$ and $\varphi\in\phi+A_{+}'$.
We define the real subspaces of self-adjoint elements in $A$ by $\selfadjoint A:=\set{\phi\in A}{a=a^{*}}$
and of self-adjoint functionals on $A$ by $\selfadjoint A':=\set{\phi\in A'}{\phi(a^{*})=\overline{\phi(a)},\ \forall a\in A}$. 
\begin{lem}
\label{lem:cstar-cone-polars}Let $A$ be a C{*}-algebra and $A'$
its dual, and the duality $\duality{\cdot}{\cdot}:A\times A'\to\R$
as defined above. Then,
\begin{enumerate}
\item $\bottompolar{A_{+}}=-A_{+}'+i\selfadjoint A'$.
\item $\bottompolar{\parenth{iA_{+}}}=-iA_{+}'+\selfadjoint A'$
\item $\bottompolar{\parenth{A_{+}'}}=-A_{+}+i\selfadjoint A$.
\item $\bottompolar{\parenth{iA_{+}'}}=-iA_{+}+\selfadjoint A$.
\end{enumerate}
\end{lem}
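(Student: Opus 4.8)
The plan is to derive all four identities from Lemma \ref{lem:elementary-polar-results}(7), which reduces each one-sided polar to a dual cone via $\bottompolar C=-C'$, and then to evaluate those dual cones using the real-linear splittings $A'=\selfadjoint{A'}\oplus i\selfadjoint{A'}$ and $A=\selfadjoint A\oplus i\selfadjoint A$. Concretely, every $\phi\in A'$ has a unique expression $\phi=\phi_{r}+i\phi_{i}$ with $\phi_{r},\phi_{i}\in\selfadjoint{A'}$, and likewise for elements of $A$. The two elementary inputs I would invoke are the standard facts that a self-adjoint functional lies in $A_{+}'$ precisely when it is nonnegative on $A_{+}$, and dually that a self-adjoint element lies in $A_{+}$ precisely when every functional in $A_{+}'$ is nonnegative on it (the latter a Hahn--Banach/separation consequence, since the positive functionals separate the self-adjoint part).

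For identity (1) I would first compute the dual cone $(A_{+})'$. Since each $a\in A_{+}$ is self-adjoint, $\phi_{r}(a)$ and $\phi_{i}(a)$ are real and $\duality a\phi=\Realpart\phi(a)=\phi_{r}(a)$; hence the condition $\duality a\phi\geq0$ on $A_{+}$ constrains only the self-adjoint part, giving $(A_{+})'=A_{+}'+i\selfadjoint{A'}$, and then $\bottompolar{A_{+}}=-(A_{+})'$ together with the negation-invariance of $i\selfadjoint{A'}$ yields the stated $-A_{+}'+i\selfadjoint{A'}$. Identity (3) is proved identically with the two spaces interchanged: one pairs $A_{+}'$ against $A$, splits an arbitrary element of $A$ into its self-adjoint and skew-adjoint parts, and uses the dual positivity fact above to recognise the constrained summand as $A_{+}$.

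For identities (2) and (4) the scheme is the same, except that the pairing is now evaluated on $iA_{+}$ (resp.\ $iA_{+}'$): for $a\in A_{+}$ one has $\duality{ia}{\phi}=\Realpart\phi(ia)=\Realpart\parenth{i\phi(a)}=-\phi_{i}(a)$, so the dual-cone condition bears on the skew-adjoint part $i\phi_{i}$ and leaves $\phi_{r}$ free, the positivity landing on $\phi_{i}$. In every case the easy inclusion is a one-line substitution, while the reverse inclusion is exactly the decomposition argument: split an arbitrary polar element as $\phi_{r}+i\phi_{i}$, read off which summand is forced to be positive, and identify it through the C{*}-algebra facts above as a copy of $A_{+}'$ (resp.\ $A_{+}$), with $\selfadjoint{A'}$ (resp.\ $\selfadjoint A$) as the free summand.

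The one place where genuine care is required --- and what I expect to be the main obstacle --- is the sign bookkeeping in (2) and (4), where two independent sign reversals must be composed: the reversal $\bottompolar C=-C'$ supplied by Lemma \ref{lem:elementary-polar-results}(7), and the reversal $\Realpart\parenth{i\phi(a)}=-\phi_{i}(a)$ coming from multiplication by $i$. Because $\selfadjoint{A'}$ and $i\selfadjoint{A'}$ are each invariant under negation, some signs are silently absorbed while the sign attached to the $A_{+}'$-summand is not, so one must track the dual cone and the outer reversal separately before reading off the final form. To guard against a slip I would verify every sign against the scalar test case $A=\mathbb{C}$, where $A_{+}=\R_{\geq 0}$, $\selfadjoint{A'}=\R$, and all four polars are explicit half-planes, which pins down the orientation of each imaginary summand unambiguously.
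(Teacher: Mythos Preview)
Your proposal is correct and follows essentially the same approach as the paper: both arguments use the real-linear decomposition $\phi=\phi_{r}+i\phi_{i}$ into self-adjoint parts, observe that the duality pairing with a positive (hence self-adjoint) element sees only $\phi_{r}$, and invoke the standard C{*}-fact that positive functionals detect positivity for the reverse inclusion in (3) and (4). Your explicit appeal to Lemma~\ref{lem:elementary-polar-results}(7) to reduce each polar to a dual cone is a minor organisational streamlining over the paper's direct two-inclusion argument, but the substance is identical.
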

\begin{proof}
We prove (1). For $\phi\in-A_{+}'$, $\varphi\in\selfadjoint A'$
and $a\in A$, since $\varphi(a^{*}a)\in\R$, it is clear that $\Realpart(\phi+i\varphi)(a^{*}a)=\phi(a^{*}a)\leq0$.
Hence $-A_{+}'+i\selfadjoint A'\subseteq\bottompolar{A_{+}}$. Conversely,
let $\phi\in\bottompolar{A_{+}}$. Defining $\phi^{*}\in A'$ by $\phi^{*}(a):=\overline{\phi(a^{*})}\ (a\in A)$,
and $\varphi,\psi\in A$ by $\varphi:=2^{-1}(\phi+\phi^{*})$ and
$\psi:=(2i)^{-1}(\phi-\phi^{*})$, so that $\varphi,\psi\in\selfadjoint A'$
and $\phi=\varphi+i\psi$. Hence, for all $a\in A$, $\varphi(a^{*}a)=\Realpart\phi(a^{*}a)=\duality{a^{*}a}{\phi}\leq0$,
so that $\varphi\in-A_{+}'$ and $\bottompolar{A_{+}}\subseteq-A_{+}'+i\selfadjoint A'$.
A similar argument will establish (2).

We prove (3). Let $a\in-A_{+}$ and $b\in\selfadjoint A$. Then for
$\phi\in A_{+}'$, $\duality{a+ib}{\phi}=\Realpart(\phi(a)+i\phi(b))=\phi(a)\leq0$,
so that $a+ib\in\bottompolar{A_{+}'}$. Conversely, let $a\in\bottompolar{A_{+}'}$.
We write $a=2^{-1}(a+a^{*})+i(2i)^{-1}(a-a^{*})$. Then, for all $\phi\in A_{+}'$,
we have $0\geq\duality a{\phi}=\Realpart\phi(a)=2^{-1}\phi(a+a^{*})$.
Therefore, $2^{-1}(a+a^{*})\in-A_{+}$ (by \cite[Proposition 2.6.2]{Dixmier}),
and hence $a=2^{-1}(a+a^{*})+i(2i)^{-1}(a-a^{*})\in-A_{+}+i\selfadjoint A$.
A similar argument will establish (4).
\end{proof}
The following result, originally due to Grothendieck (\cite{Grothendieck}
via \cite[Theorem~3.2.5]{Pedersen}), shows that the cones $\{A_{+}',-A_{+}',iA_{+}',-iA_{+}'\}$
generate the dual of a C{*}-algebra $A$. 
\begin{thm}
\label{thm:Grothendieck-Jordan}Let $A$ be a C{*}-algebra. If $\phi\in\selfadjoint A'$,
then there exist unique $\phi_{+},\phi_{-}\in A_{+}'$ with $\phi=\phi_{+}-\phi_{-}$
and $\norm{\phi}=\norm{\phi_{+}}+\norm{\phi_{-}}$.
\end{thm}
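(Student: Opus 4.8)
The plan is to reduce the statement to a von Neumann algebra, where it is the classical Jordan decomposition of a normal self-adjoint functional, and to extract the latter from the polar decomposition in the predual. Let $M := A''$ be the enveloping von Neumann algebra (equivalently, $A^{**}$), so that $A'$ is the predual $M_{*}$ isometrically and order-isomorphically: a functional on $A$ lies in $\selfadjoint{A'}$ (resp.\ in $A_{+}'$) if and only if its unique normal extension to $M$, which I will still denote by $\phi$, is self-adjoint (resp.\ positive); the restriction map carries $M_{*}^{+}$ isometrically onto $A_{+}'$; and $\norm{\psi} = \psi(1_{M})$ for $\psi \in M_{*}^{+}$. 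Hence it suffices to write a self-adjoint $\phi \in M_{*}$ as $\phi = \phi_{+} - \phi_{-}$ with $\phi_{\pm} \in M_{*}^{+}$ and $\norm{\phi} = \norm{\phi_{+}} + \norm{\phi_{-}}$; restricting back to $A$ then yields the theorem.

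To construct the decomposition I would invoke the polar decomposition in $M_{*}$: there are $\abs{\phi} \in M_{*}^{+}$ and a partial isometry $v \in M$ with $v^{*}v = e$, the support projection of $\abs{\phi}$, such that $\phi(x) = \abs{\phi}(xv)$ for all $x \in M$ and $\norm{\phi} = \norm{\abs{\phi}} = \abs{\phi}(e)$. Because $\phi = \phi^{*}$, the partial isometry may be chosen self-adjoint; then $v$ is a symmetry of the reduced algebra $eMe$, so $v = p - q$ with $p, q \in M$ orthogonal projections and $p + q = e$. I would set $\phi_{+}(x) := \abs{\phi}(pxp)$ and $\phi_{-}(x) := \abs{\phi}(qxq)$, both in $M_{*}^{+}$ since $\abs{\phi} \geq 0$. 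Taking conjugates in the polar identity and using $v = v^{*}$ gives also $\phi(x) = \abs{\phi}(vx)$, hence $\phi(x) = \tfrac{1}{2}\abs{\phi}(vx + xv)$; using that $\abs{\phi}$ is supported on $e$ (so $\abs{\phi}(y) = \abs{\phi}(eye)$), together with $v = eve = p - q$ and $e = p + q$, this expands so that the mixed terms $\pm\abs{\phi}(pxq)$, $\pm\abs{\phi}(qxp)$ cancel and one is left with $\phi(x) = \abs{\phi}(pxp) - \abs{\phi}(qxq)$. Thus $\phi = \phi_{+} - \phi_{-}$, and $\norm{\phi_{+}} + \norm{\phi_{-}} = \abs{\phi}(p) + \abs{\phi}(q) = \abs{\phi}(e) = \norm{\phi}$, as required.

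Uniqueness is standard and I would cite it: the identity $\norm{\phi} = \norm{\phi_{+}} + \norm{\phi_{-}}$ for $\phi_{\pm} \in M_{*}^{+}$ is equivalent to the support projections of $\phi_{+}$ and $\phi_{-}$ being orthogonal, and any two orthogonal decompositions of the same $\phi$ coincide (cf.\ \cite{Pedersen}). The step I expect to need the most care is the reduction \emph{``$v$ may be taken self-adjoint''} when $\phi = \phi^{*}$ — a standard but non-obvious feature of the predual polar decomposition — and, together with it, the bookkeeping that cancels the mixed terms; everything else is routine. An alternative self-contained route avoids the predual polar decomposition altogether: by weak-$*$ compactness of the unit ball of $M$ and self-adjointness of $\phi$, attain $\norm{\phi}$ at a self-adjoint $h \in M$ with $\norm{h} \leq 1$; comparing the real measure $S \mapsto \phi(E_{h}(S))$, of total variation $\leq \norm{\phi}$, with $\phi(h) = \norm{\phi} = \int t\, d(\phi\circ E_{h})(t)$ forces this measure to be carried by $\{\pm 1\}$, so with $p := E_{h}(\{1\})$ and $q := E_{h}(\{-1\})$ one gets $pq = 0$ and $\phi(p) - \phi(q) = \norm{\phi}$; replacing $h$ by $p - q$ and testing $\phi$ against $h \pm tx$ for $x$ ranging over the corners of $M$ determined by $p$, $q$, $1-p-q$ (where $h \pm tx$ is block-diagonal with norm $\leq 1 + O(t^{2})$, letting $t \downarrow 0$) shows $\phi(pxp) \geq 0$, $\phi(qxq) \leq 0$, and $\phi$ vanishing on the remaining corners, which again produces $\phi = \phi(p\cdot p) - (-\phi(q\cdot q))$ with the stated norm identity. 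This route uses only elementary $C^{*}$-algebra machinery, at the cost of being more hands-on.
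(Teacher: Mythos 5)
The paper does not actually prove this statement: it is quoted as a known result of Grothendieck, with the proof deferred to the cited reference (Pedersen, Theorem 3.2.5). So there is nothing in the paper to compare against line by line; what matters is whether your argument is a correct self-contained proof, and it is. Your main route --- pass to $M=A''$, identify $A'$ with $M_{*}$ isometrically and positively, take the polar decomposition $\phi(x)=\abs{\phi}(xv)$ with $v=v^{*}=p-q$, and average the left and right polar identities so the cross terms $\abs{\phi}(pxq)$, $\abs{\phi}(qxp)$ cancel --- is the standard von Neumann-algebra proof (Sakai/Takesaki), and the algebra checks out: $\tfrac12\,e(xv+vx)e = pxp-qxq$, so $\phi=\abs{\phi}(p\cdot p)-\abs{\phi}(q\cdot q)$ and $\norm{\phi_{+}}+\norm{\phi_{-}}=\abs{\phi}(e)=\norm{\phi}$. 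The one genuinely nontrivial input, as you correctly flag, is that the partial isometry in the predual polar decomposition of a self-adjoint functional is itself self-adjoint; this follows from the uniqueness clause of the polar decomposition but should be cited rather than asserted. Your alternative sketch (attain the norm at a self-adjoint $h$ in the unit ball of $A''$, show the measure $\phi\circ E_{h}$ lives on $\{\pm1\}$, then perturb $p-q$ corner by corner to kill the off-diagonal parts) is essentially the proof in the reference the paper cites, and the perturbation estimates you indicate ($1+O(t^{2})$ for the off-diagonal corners, $p-tx$ staying in the unit ball for the diagonal ones) are the right ones. Uniqueness via orthogonality of supports is standard, as you say. In short: correct, with the caveat that two standard facts (the identification $A'\cong (A'')_{*}$ respecting order and norm, and self-adjointness of $v$) are used as black boxes and need citations.
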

We can now apply our results from the previous sections to obtain
the following geometric properties of naturally occurring cones in
C{*}-algebras and their duals. The results presented here are stronger
than what is usually presented in the canon (e.g., \cite[1.6.9]{Dixmier}). 
\begin{thm}
    \label{thm:cstar-order-results}Let $A$ be a C{*}-algebra and $A'$
    its dual.
    \begin{enumerate}
    \item For $b_{1},b_{2},b_{3},b_{4}\in A$, if
    \begin{eqnarray*}
     &  & a\in\parenth{b_{1}+A_{+}+i\selfadjoint A}\cap\parenth{b_{2}-A_{+}+i\selfadjoint A}\\
     &  & \qquad\qquad\cap\parenth{b_{3}+iA_{+}+\selfadjoint A}\cap\parenth{b_{4}-iA_{+}+\selfadjoint A}
    \end{eqnarray*}
    then $\norm a\leq\max\curly{\norm{b_{1}},\norm{b_{2}}}+\max\curly{\norm{b_{3}},\norm{b_{4}}}$.
    \item For $b_{1},b_{2},b_{3},b_{4}\in A$, if
    \begin{eqnarray*}
     &  & a\in\parenth{b_{1}+A_{+}}\cap\parenth{b_{2}-A_{+}}\cap\parenth{b_{3}+iA_{+}}\cap\parenth{b_{4}-iA_{+}},
    \end{eqnarray*}
    then $\norm a\leq\max\curly{\norm{b_{1}},\norm{b_{2}}}+\max\curly{\norm{b_{3}},\norm{b_{4}}}$.
    \item For $a,b,c\in A$, if $a\leq b\leq c$, then $\norm b\leq2\max\curly{\norm a,\norm c}$.
    \item For $a,b,c\in\selfadjoint A$, if $a\leq b\leq c$, then $\norm b\leq\max\curly{\norm a,\norm c}$.
    \item For $\phi_{1},\phi_{2},\phi_{3},\phi_{4}\in A'$, if
    \begin{eqnarray*}
     &  & \varphi\in\parenth{\phi_{1}+A_{+}'+i\selfadjoint A'}\cap\parenth{\phi_{2}-A_{+}'+i\selfadjoint A'}\\
     &  & \qquad\qquad\cap\parenth{\phi_{3}+iA_{+}'+\selfadjoint A'}\cap\parenth{\phi_{4}-iA_{+}'+\selfadjoint A'}
    \end{eqnarray*}
    then $\norm{\varphi}\leq\sum_{j=1}^{4}\norm{\phi_{j}}$.
    \item For $\phi_{1},\phi_{2},\phi_{3},\phi_{4}\in A'$, if
    \begin{eqnarray*}
     &  & \varphi\in\parenth{\phi_{1}+A_{+}'}\cap\parenth{\phi_{2}-A_{+}'}\cap\parenth{\phi_{3}+iA_{+}'}\cap\parenth{\phi_{4}-iA_{+}'},
    \end{eqnarray*}
    then $\norm{\varphi}\leq\sum_{j=1}^{4}\norm{\phi_{j}}$.
    \item For $\rho,\phi,\psi\in A'$, if $\rho\leq\phi\leq\psi$, then
    $\norm{\phi}\leq2\parenth{\norm{\rho}+\norm{\psi}}$.
    \item For $\rho,\phi,\psi\in\selfadjoint A'$, if $\rho\leq\phi\leq\psi$,
    then $\norm{\phi}\leq\norm{\rho}+\norm{\psi}$.
    \end{enumerate}
\end{thm}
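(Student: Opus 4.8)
The plan is to reduce the whole theorem to its two ``self-adjoint'' instances, (4) and (8), and to obtain the other six by splitting into self-adjoint and skew parts. I write $\Realpart a:=2^{-1}(a+a^{*})$ and $\textup{Im}\,a:=(2i)^{-1}(a-a^{*})$, so that $a=\Realpart a+i\,\textup{Im}\,a$ with $\Realpart a,\textup{Im}\,a\in\selfadjoint A$, $\max\curly{\norm{\Realpart a},\norm{\textup{Im}\,a}}\leq\norm a\leq\norm{\Realpart a}+\norm{\textup{Im}\,a}$, and likewise for functionals using $\phi^{*}(a):=\overline{\phi(a^{*})}$. Computing $x\pm x^{*}$ yields the set identities $A_{+}+i\selfadjoint A=\set{x}{\Realpart x\in A_{+}}$ and $iA_{+}+\selfadjoint A=\set{x}{\textup{Im}\,x\in A_{+}}$, together with their sign-reversed versions and their analogues in $A'$; these (in line with Lemma \ref{lem:cstar-cone-polars}) are what turn all the cone-membership hypotheses below into order relations. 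I will also use the standard additivity of $\norm{\cdot}$ on the cone $A_{+}'$.

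For (4): $\selfadjoint A$ is a real Banach space pre-ordered by the closed cone $A_{+}$, with dual $\selfadjoint A'$ and dual cone $A_{+}'$. By Grothendieck's Jordan decomposition (Theorem \ref{thm:Grothendieck-Jordan}) every $\phi\in\selfadjoint A'$ satisfies $\phi=\phi_{+}-\phi_{-}$ with $\phi_{\pm}\in A_{+}'$ and $\norm{\phi_{+}}+\norm{\phi_{-}}\leq\norm{\phi}$; this is precisely statement (b) of Corollary \ref{cor:general-banach-space-normality-duality-interpreted}(1), applied with $\Omega=\curly{1,2}$, $C_{1}=A_{+}$, $C_{2}=-A_{+}$, and $\alpha=1$ --- equivalently, statement (1)(b) of Theorem \ref{thm:classical-normality-duality} with $\alpha=1$. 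The equivalence in that result gives statement (a), which is exactly (4). Now (1) follows: the four hypotheses on $a$ unwind, via the set identities, to $\Realpart b_{1}\leq\Realpart a\leq\Realpart b_{2}$ and $\textup{Im}\,b_{3}\leq\textup{Im}\,a\leq\textup{Im}\,b_{4}$ in $\selfadjoint A$, so (4) bounds $\norm{\Realpart a}$ by $\max\curly{\norm{b_{1}},\norm{b_{2}}}$ and $\norm{\textup{Im}\,a}$ by $\max\curly{\norm{b_{3}},\norm{b_{4}}}$, whence $\norm a\leq\norm{\Realpart a}+\norm{\textup{Im}\,a}$ gives the claim. Statement (2) is immediate from (1), since $\pm A_{+}\subseteq\pm A_{+}+i\selfadjoint A$ and $\pm iA_{+}\subseteq\pm iA_{+}+\selfadjoint A$. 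For (3): $a\leq b\leq c$ makes $b-a$ and $c-b$ self-adjoint, so $\textup{Im}\,a=\textup{Im}\,b=\textup{Im}\,c$ and $\Realpart a\leq\Realpart b\leq\Realpart c$; then $\norm{\Realpart b}\leq\max\curly{\norm a,\norm c}$ by (4) and $\norm{\textup{Im}\,b}=\norm{\textup{Im}\,a}\leq\norm a$, so $\norm b\leq2\max\curly{\norm a,\norm c}$.

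The main obstacle is (8): its bound $\norm{\rho}+\norm{\psi}$ is strictly stronger than the $2\max\curly{\norm{\rho},\norm{\psi}}$ that a formal appeal to the normality--conormality duality would give (the conormality constant of $\selfadjoint A$ being $2$ in general), so it must be argued directly. Given $\rho\leq\phi\leq\psi$ in $\selfadjoint A'$, put $\sigma:=\phi-\rho$, $\tau:=\psi-\phi$ (both in $A_{+}'$) and decompose $\rho=\rho_{+}-\rho_{-}$, $\psi=\psi_{+}-\psi_{-}$ by Theorem \ref{thm:Grothendieck-Jordan}. Then $\phi=(\rho_{+}+\sigma)-\rho_{-}$ exhibits $\phi$ as a difference of positive functionals, so by additivity of $\norm{\cdot}$ on $A_{+}'$ we get $\norm{\phi}\leq\norm{\rho_{+}+\sigma}+\norm{\rho_{-}}=\norm{\rho}+\norm{\sigma}$; symmetrically $\phi=\psi_{+}-(\psi_{-}+\tau)$ gives $\norm{\phi}\leq\norm{\psi}+\norm{\tau}$. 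Since $\sigma+\tau=\psi-\rho$ with $\sigma,\tau\in A_{+}'$, we have $\norm{\sigma}+\norm{\tau}=\norm{\psi-\rho}\leq\norm{\rho}+\norm{\psi}$, and adding the two bounds yields $2\norm{\phi}\leq2\parenth{\norm{\rho}+\norm{\psi}}$. Finally (5), (6), (7) drop out of (8) exactly as (1), (2), (3) dropped out of (4): for (5) the hypotheses on $\varphi$ unwind to $\Realpart\phi_{1}\leq\Realpart\varphi\leq\Realpart\phi_{2}$ and $\textup{Im}\,\phi_{3}\leq\textup{Im}\,\varphi\leq\textup{Im}\,\phi_{4}$ in $\selfadjoint A'$, and two uses of (8) with $\norm{\varphi}\leq\norm{\Realpart\varphi}+\norm{\textup{Im}\,\varphi}$ give $\norm{\varphi}\leq\sum_{j=1}^{4}\norm{\phi_{j}}$; (6) follows from (5) by the same inclusion of cones as in (2); and (7) follows from (8) via the splitting of functionals, giving $\norm{\phi}\leq2\norm{\rho}+\norm{\psi}\leq2\parenth{\norm{\rho}+\norm{\psi}}$.
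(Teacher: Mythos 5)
Your proof is correct, but it is a genuinely different argument from the paper's, essentially running the paper's deductions in reverse. The paper proves (1) and (5) first, directly from the general machinery: for (1) it builds the dual pair $Y=(A\oplus_{\infty}A)\oplus_{1}(A\oplus_{\infty}A)$, $Z=(A'\oplus_{1}A')\oplus_{\infty}(A'\oplus_{1}A')$, computes the relevant one-sided polars via Lemma \ref{lem:cstar-cone-polars}, uses Theorem \ref{thm:Grothendieck-Jordan} to get conormality of $Z$, and dualizes with Theorem \ref{thm:Normality-Duality}(1); for (5) it uses the four-cone decomposition $a=a_{1}-a_{2}+ia_{3}-ia_{4}$ with $\max_{j}\norm{a_{j}}\leq\norm a$ together with Corollary \ref{cor:general-banach-space-normality-duality-interpreted}(4). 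Items (2)--(4) and (6)--(8) are then read off as specializations. You instead establish the self-adjoint items (4) and (8) first and recover (1) and (5) by splitting into real and imaginary parts via the identities $A_{+}+i\selfadjoint A=\set x{\Realpart x\in A_{+}}$ and their variants: your (4) comes from the classical two-cone duality (Theorem \ref{thm:classical-normality-duality}(1) with $\alpha=1$ applied to $\selfadjoint A$) plus Theorem \ref{thm:Grothendieck-Jordan}, and your (8) is a clean direct computation with the Jordan decomposition. Both routes are sound; yours is more elementary and bypasses the four-cone construction that the section is meant to showcase, at the cost of two extra standard C{*}-facts you should cite explicitly: the additivity of the norm on $A_{+}'$ (needed for $\norm{\rho_{+}+\sigma}=\norm{\rho_{+}}+\norm{\sigma}$ and $\norm{\sigma}+\norm{\tau}=\norm{\psi-\rho}$), and the isometric identification of $\parenth{\selfadjoint A}'$ with $\selfadjoint A'$ carrying the dual cone of $A_{+}$ onto $A_{+}'$ (needed to read Grothendieck's theorem as statement (1)(b) of Theorem \ref{thm:classical-normality-duality} for the real pre-ordered Banach space $\selfadjoint A$). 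One side remark is slightly off: formal duality is not powerless for (8), since the $\l{\infty}$-type decomposition $x=x_{+}-x_{-}$ with $\max\curly{\norm{x_{+}},\norm{x_{-}}}\leq\norm x$ gives $\norm{\phi}\leq\norm{\rho}+\norm{\psi}$ via Corollary \ref{cor:general-banach-space-normality-duality-interpreted}(4) with two cones and $\alpha=1$; it is only the $\l 1$-type conormality constant of $\selfadjoint A$ that is $2$. This does not affect your argument, which proves (8) directly in any case.
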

\begin{proof}
We prove (1) by applying Theorem \ref{thm:Normality-Duality}. We
define the Banach spaces $Y:=(A\oplus_{\infty}A)\oplus_{1}(A\oplus_{\infty}A)$
and $Z:=(A'\oplus_{1}A')\oplus_{\infty}(A'\oplus_{1}A')$, and define
the duality $\Duality{\cdot}{\cdot}:Y\times Z\to\R$ by 
\[
\Duality{(a_{1},a_{2},a_{3},a_{4})}{(\phi_{1},\phi_{2},\phi_{3},\phi_{4})}:=\sum_{j=1}^{4}\duality{a_{j}}{\phi_{j}}=\sum_{j=1}^{4}\Realpart\phi_{j}(a_{j}),
\]
where $(a_{1},a_{2},a_{3},a_{4})\in Y$ and $(\phi_{1},\phi_{2},\phi_{3},\phi_{4})\in Z$.
We define the sets
\begin{eqnarray*}
C & := & \set{(a_{0,0},a_{0,1},a_{1,0},a_{1,1})\in Y}{a_{j,k}\in(-1)^{k}(i)^{j}A_{+}+(i)^{1-j}\selfadjoint A},\\
\allconstants & := & \set{(a,a,a,a)\in Y}{a\in A},\\
\oneconstants & := & \set{(a,a,a,a)\in Y}{a\in A,\ \norm a\leq1},\\
E & := & \set{(\phi_{0,0},\phi_{0,1},\phi_{1,0},\phi_{1,1})\in Z}{\phi_{j,k}\in(-1)^{1-k}(i)^{j}A_{+}'},\\
\zerosummables & := & \set{(\phi_{1},\phi_{2},\phi_{3},\phi_{4})\in Z}{\sum_{j=1}^{4}\phi_{j}=0},\\
\onesummables & := & \set{(\phi_{1},\phi_{2},\phi_{3},\phi_{4})\in Z}{\norm{\sum_{j=1}^{4}\phi_{j}}\leq1}.
\end{eqnarray*}
An easy computation together with Lemma \ref{lem:cstar-cone-polars}
will show that $\bottompolar C=E$, $\bottompolar{\allconstants}=\zerosummables$,
$\bottompolar{\oneconstants}=\onesummables$ and $\bottompolar{\closedball Y}=\closedball Z$.
By Theorem \ref{thm:Grothendieck-Jordan}, for every $\zeta\in\onesummables$,
there exist $\phi_{1},\phi_{2},\phi_{3},\phi_{4}\in A_{+}'$ with
$\sum_{j=1}^{4}\zeta_{j}=\phi_{2}-\phi_{1}+i(\phi_{4}-\phi_{3})$
and 
\[
\max\curly{\norm{\phi_{1}}+\norm{\phi_{2}},\norm{\phi_{4}}+\norm{\phi_{3}}}\leq\norm{\sum_{j=1}^{4}\zeta_{j}}\leq1,
\]
i.e., $\Phi:=(\phi_{1},\phi_{2},\phi_{3},\phi_{4})\in\closedball Z\cap E$.
Therefore, $\zeta=\Phi+(\zeta-\Phi)\in\closedball Z\cap E+\zerosummables$
and we conclude that $\onesummables\subseteq\closedball Z\cap E+\zerosummables$,
i.e., $Z$ is conormal with respect to $(E,\zerosummables,\onesummables,\closedball Z)$.
Since $\closedball Z$ is $\sigma(Z,Y)$-compact (by The Banach-Alaoglu
Theorem \cite[Theorem V.3.1]{Conway}) and $\zerosummables$ is $\sigma(Z,Y)$-closed,
$\closedball Z\cap E+\zerosummables$ is $\sigma(Z,Y)$-closed. Hence,
by Theorem~\ref{thm:Normality-Duality}(1), $Y$ is normal with respect
to $(C,\allconstants,\oneconstants,\closedball Y)$, i.e., $(\closedball Y+C)\cap\allconstants\subseteq\oneconstants$.
Therefore, for $b_{1},b_{2},b_{3},b_{4}\in A$, if
\[
a\in\parenth{b_{1}+A_{+}+i\selfadjoint A}\cap\parenth{b_{2}-A_{+}+i\selfadjoint A}\cap\parenth{b_{3}+iA_{+}+\selfadjoint A}\cap\parenth{b_{4}-iA_{+}+\selfadjoint A},
\]
 then $\norm a\leq\max\curly{\norm{b_{1}},\norm{b_{2}}}+\max\curly{\norm{b_{3}},\norm{b_{4}}}$.

We prove (5). For every $a\in A$, there exist elements $a_{1},a_{2},a_{3},a_{4}\in A_{+}$
with $a=a_{1}-a_{2}+ia_{3}-ia_{4}$ and $\max_{j\in\{1,2,3,4\}}\norm{a_{j}}\leq\norm a$.
By Lemma \ref{lem:cstar-cone-polars} and Corollary~\ref{cor:general-banach-space-normality-duality-interpreted}(4),
for $\phi_{j,k}\in A'\ (j,k\in\{0,1\})$, if $\varphi\in\bigcap_{j,k\in\{0,1\}}\parenth{\phi_{j,k}+(-1)^{j}(i)^{k}A_{+}'+(i)^{1-k}\selfadjoint A'}$,
then $\norm{\varphi}\leq\sum_{j,k\in\{0,1\}}\norm{\phi_{j,k}}$.

The assertions (2) and (6) follow immediately from (1) and (5) respectively,
since $A_{+}\subseteq A_{+}+\selfadjoint A$ and $A_{+}'\subseteq A_{+}'+\selfadjoint{A'}$.

We prove (3). If $a,b,c\in A$ satisfy $a\leq b\leq c$, then
\begin{eqnarray*}
b & \in & \parenth{a+A_{+}}\cap\parenth{c-A_{+}}\cap\parenth{a+\selfadjoint A}\cap\parenth{c+\selfadjoint A}\\
 & \subseteq & \parenth{a+A_{+}+i\selfadjoint A}\cap\parenth{c-A_{+}+i\selfadjoint A}\cap\parenth{a+iA_{+}+\selfadjoint A}\cap\parenth{c-iA_{+}+\selfadjoint A}.
\end{eqnarray*}
Hence, by (1), we obtain $b\leq2\max\curly{\norm a,\norm c}$.

We prove (4). If $a,b,c\in\selfadjoint A$ satisfy $a\leq b\leq c$,
then 
\begin{eqnarray*}
b & \in & \parenth{a+A_{+}}\cap\parenth{c-A_{+}}\cap\parenth{0+\selfadjoint A}\cap\parenth{0+\selfadjoint A}\\
 & \subseteq & \parenth{a+A_{+}+i\selfadjoint A}\cap\parenth{c-A_{+}+i\selfadjoint A}\cap\parenth{0+iA_{+}+\selfadjoint A}\cap\parenth{0-iA_{+}+\selfadjoint A}.
\end{eqnarray*}
Hence, by (1), we obtain $b\leq\max\curly{\norm a,\norm c}$.

We prove (7). If $\rho,\phi,\psi\in A'$ satisfy $\rho\leq\phi\leq\psi$,
then 
\begin{eqnarray*}
\phi & \in & \parenth{\rho+A_{+}'}\cap\parenth{\psi-A_{+}'}\cap\parenth{\rho+\selfadjoint A'}\cap\parenth{\psi+\selfadjoint A'}\\
 & \subseteq & \parenth{\rho+A_{+}'+i\selfadjoint A'}\cap\parenth{\psi-A_{+}'+i\selfadjoint A'}\cap\parenth{\rho+iA_{+}'+\selfadjoint A'}\cap\parenth{\psi-iA_{+}'+\selfadjoint A'},
\end{eqnarray*}
so, by (5), $\norm{\phi}\leq2\parenth{\norm{\rho}+\norm{\psi}}.$

We prove (8). If $\rho,\phi,\psi\in\selfadjoint A'$ satisfy $\rho\leq\phi\leq\psi$,
then 
\begin{eqnarray*}
\phi & \in & \parenth{\rho+A_{+}'}\cap\parenth{\psi-A_{+}'}\cap\parenth{0+\selfadjoint A'}\cap\parenth{0+\selfadjoint A'}\\
 & \subseteq & \parenth{\rho+A_{+}'+i\selfadjoint A'}\cap\parenth{\psi-A_{+}'+i\selfadjoint A'}\cap\parenth{0+iA_{+}'+\selfadjoint A'}\cap\parenth{0-iA_{+}'+\selfadjoint A'},
\end{eqnarray*}
so, by (5), $\norm{\phi}\leq\norm{\rho}+\norm{\psi}.$\end{proof}
\begin{rem}
Some of the above results are known: The earliest references to (4) and
(8) known to the author is \cite[Examples 1.1.7 and 1.2.5]{BattyRobinson}.
Particularly, (4) can be established through an elementary application
of Grothendieck's 1957 result Theorem \ref{thm:Grothendieck-Jordan},
and Grosberg and Krein's 1939 result Theorem \ref{thm:classical-normality-duality}(1)
from the introduction, so is (at least in theory) quite old. No references
to (1)--(3) and (5)--(7) are known to the author.
\end{rem}
%
		\bibliographystyle{amsplain}
		\bibliography{bibliography}

\end{document}